\newtheorem{theorem}{Theorem}[section]
\newtheorem{lemma}[theorem]{Lemma}
\newtheorem{corollary}[theorem]{Corollary}
\newtheorem{definition}[theorem]{Definition}
\newtheorem{proposition}[theorem]{Proposition}
\newcommand{\CW}{\operatorname{CW}}
\newcommand{\N}{\mathbb{N}}
\newcommand{\R}{\operatorname{{\mathbb R}}}
\newcommand{\Z}{\operatorname{{\mathbb Z}}}
\renewcommand{\N}{\operatorname{{\mathbb N}}}
\newcommand{\Q}{\operatorname{{\mathbb Q}}}
\newcommand\ANR{\operatorname{ANR}}
\newcommand\inter{\operatorname{int}}
\newcommand\extdim{\operatorname{extdim}}
\newcommand\mesh{\operatorname{mesh}}
\newcommand\diam{\operatorname{diam}}
\newcommand\st{\operatorname{st}}
\newcommand\inv{^{-1}}
\begin{document}

\title{Simplicial Inverse Sequences in Extension Theory}
\author{Leonard R. Rubin}

\address{Department of Mathematics\\
University of Oklahoma\\
Norman, Oklahoma 73019\\
USA}\email{lrubin@ou.edu}

\author{Vera Toni\' c}

\address{Department of Mathematics\\
University of Rijeka\\
51000 Rijeka\\ Croatia}\email{vera.tonic@math.uniri.hr}

\date {13 March 2017}

\begin{abstract}In extension theory, in particular in
dimension theory, it is frequently useful
to represent a given compact metrizable space $X$ as the
limit of an inverse sequence of compact polyhedra.  We
are going to show that, for the purposes of extension theory,
it is possible to replace such an $X$ by a better metrizable
compactum $Z$.  This $Z$ will come as the limit of an inverse
sequence of triangulated polyhedra with  simplicial
bonding maps that factor in a certain way.  There will be a
cell-like map $\pi:Z\to X$, and we shall show that if $K$
is a $\CW$-complex with $X\tau K$, then $Z\tau K$.
\end{abstract}

\subjclass[2000]{54C55, 54C20}

\keywords{Absolute co-extensor, absolute neighborhood extensor,
ANR, cell-like, cohomological dimension, CW-complex, dimension,
Eilenberg-MacLane complex, extension dimension, extension
theory, Hilbert cube, inverse sequence, resolution, shape of a
point, simplicial inverse sequence, trivial shape}

\maketitle \markboth{L. Rubin and V. Toni\' c}{Simplicial
Inverse Sequences in Extension Theory}

\section[Introduction]
{Introduction}\label{intro}

In extension theory, and in particular the theories of
dimension and cohomological dimension $\dim_G$ over an abelian
group $G$ (\cite{Ku}), it is frequently useful to represent a
given compact metrizable space $X$ as the limit of an inverse
sequence of compact polyhedra. This was of importance in the
proofs of the Edwards-Walsh cell-like resolution theorem
(\cite{Ed}, \cite{Wa}), Dranishnikov's
$\mathbb{Z}/p$-resolution theorem (\cite{Dr}), and Levin's
$\mathbb{Q}$-resolution theorem (\cite{Le}).  In each case the
hypothesis was that $\dim_G X\leq n$ (the abelian group $G$
depending on which of the three cases was under consideration),
and the first step in their respective proofs of the existence
of a resolution of the desired type (cell-like, $\Z/p$-acyclic,
$\Q$-acyclic, respectively) was to represent $X$ as the
inverse limit of an inverse sequence of compact polyhedra. That
this can always be done comes originally from H. Freudenthal
(\cite{Fr}), but the result can be found also as Corollary 4.10.11.
in \cite{Sa}. It stipulates that each compact metrizable space
can be written as the inverse limit of an inverse sequence
$(X_i,p_i^{i+1})$ of finite polyhedra with surjective piecewise
linear bonding maps $p_i^{i+1}:X_{i+1}\to X_i$, where piecewise
linear means that the domain and range of $p_i^{i+1}$ can be
triangulated in such a manner that $p_i^{i+1}$ is simplicial
with respect to these triangulations.

One might ask if it is possible to arrange such an inverse
sequence so that each polyhedron has a fixed triangulation and
so that all the bonding maps are simplicial with respect to
these triangulations. It was shown in \cite{Ma} that this is
not always attainable.  On the other hand, at least for the
purposes of extension theory, can such an obstacle be removed?
Let us give a brief explanation of what we have accomplished
in this direction.

When $X$ and $K$ are spaces, we are going to write $X\tau K$ to
mean that $X$ is an absolute co-extensor for $K$, i.e., for
each closed subset $A$ of $X$ and map $f:A\to K$, there exists
a map $g:X\to K$ that extends $f$.  This is the fundamental
notion of extension theory, and typically $K$ is a
$\CW$-complex.  Let $X$ be a compact metrizable space.  Then
$X\tau S^n$ if and only if $\dim X\leq n$.  For cohomological
dimension $\dim_G$ over an abelian group $G$, one has that
$\dim_G X\leq n$ if and only if $X\tau K(G,n)$ where $K(G,n)$
is any Eilenberg-MacLane complex of type $(G,n)$. Thus
extension theory is a unifying structure in the study of
dimension theory.

Our main result is Theorem \ref{bigmapistrivsh}.  It states
that for a given nonempty metrizable compactum $X$, there
exists a metrizable compactum $Z$ and a cell-like map (see
Definition \ref{trivshamap}) $\pi:Z\to X$ such that if $K$ is a
$\CW$-complex with $X\tau K$, then $Z\tau K$. Moreover, the
compactum $Z$ comes as the limit of an inverse sequence
$\mathbf{Z}=(|T_j|,g_j^{j+1})$ in which all the bonding maps
 are simplicial with respect to the given
finite triangulations $T_i$ of the polyhedra $|T_i|$.
These $g_j^{j+1}$ have simplicial factorizations (see
Lemma \ref{enhancedbigpicture}) $g_j^{j+1}=f_j^{j+1}\circ
\varphi_{j+1}:|T_{j+1}|\to|T_j|$, $\varphi_{j+1}:|T_{j+1}|
\to|\widetilde T_{j+1}|$, where $T_{j+1}$ is a subdivision
of $\widetilde T_{j+1}$ and $\varphi_{j+1}$ is a simplicial
approximation to the identity map; these play a prominent
role in our development.

We also provide a theory of ``adjustments'' (Definition
\ref{adjustedinvseq}) to such a $\mathbf{Z}$. If $n\geq0$,
$(j_i)$ is an increasing sequence in $\N$, and each
$g_{j_i}^{j_{i+1}}\big||T_{j_{i+1}}^{(n)}|:
|T_{j_{i+1}}^{(n)}|\to|T_{j_i}^{(n)}|$, is replaced by a map
$h_i^{i+1}$ that is a $T_{j_i}$-modification of it (see
Definition \ref{modif}), then we get a new inverse sequence
$\mathbf{M}=(|T_{j_i}^{(n)}|,h_i^{i+1})$. There is a uniquely
induced surjective map $\pi:\lim\mathbf{M}\to X$. This is
covered in Lemma \ref{altAJR}, where it is shown how to
describe each fiber $\pi\inv(x)$ of $\pi$ as the limit of three
different sub-inverse sequences of $\mathbf{M}$. This was
employed in our proof of Theorem \ref{bigmapistrivsh}. The
existence of such maps whose fibers are so well-described
has the potential to be used in other resolution theorems.

\section[Simplicial Complexes and Extensors]
{Simplicial Complexes and Extensors}\label{simpcomp}

For each simplicial complex $T$,  $|T|$ will designate its
polyhedron with the weak topology.  The $n$-skeleton of $T$ is
going to be written $T^{(n)}$.  If $T$ is finite, then we shall
supply it with the metric induced by $T$; in this case the weak
topology on $|T|$ is the same as the metric topology. If $v\in
T^{(0)}$, then $\st(v,T)$ will denote the {\it open star} of
$v$ in $T$ and $\overline\st(v,T)$ will denote the {\it closed
star} of $v$ in $T$.  Of course, $\st(v,T)$ is an open
neighborhood of $v$ in the polyhedron $|T|$,
$\overline\st(v,T)$ is a closed subset of $|T|$, and
$\st(v,T)\subset\overline\st(v,T)$. Moreover, each of
$\st(v,T)$ and $\overline\st(v,T)$ is contractible, and there
is a unique subcomplex $S_{v,T}$ of $T$ such that $|S_{v,T}|=
\overline\st(v,T)$.  We make the convention that
$T^{(\infty)}=T$.  Map will always mean continuous function.
Also,

\begin{definition}\label{modif} If $f:X\to|T|$ is a function
where $T$ is a simplicial complex, then a function $g:X\to|T|$
is called a $T$-{\bf modification} of $f$ if for each $x\in X$
and simplex $\sigma$ of $T$ with $f(x)\in\sigma$,
$g(x)\in\sigma$. This is equivalent to saying that for each
$x\in X$ and simplex $\sigma$ of $T$ with
$f(x)\in\inter(\sigma)$, $g(x)\in\sigma$.
\end{definition}

The ``straight
line'' homotopy gives us the next fact.

\begin{lemma}\label{modishotop}Let $T$ be a finite simplicial complex,
$X$ a space, and $f$, $g$ maps of $X$ to $|T|$ such that $g$
is a $T$-modification of $f$. Then $g\simeq f$.\qed
\end{lemma}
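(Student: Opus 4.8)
The plan is to use the straight-line homotopy, exactly as the remark preceding the statement suggests. Since $T$ is finite, I would first realize $|T|$ as a subset of the finite-dimensional real vector space $V$ spanned by the vertex set $T^{(0)}$, identifying each point of $|T|$ with its tuple of barycentric coordinates. For finite $T$ the metric topology on $|T|$ agrees with the weak topology, and both coincide with the subspace topology inherited from $V$. In this model each simplex $\sigma$ of $T$ is a convex subset of $V$, so any convex combination of two points of $\sigma$ again lies in $\sigma$.

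I then define $H\colon X\times[0,1]\to V$ by $H(x,t)=(1-t)f(x)+t\,g(x)$. Because $f$ and $g$ are continuous into $V$ and the vector-space operations of $V$ are continuous, $H$ is continuous as a map into $V$. Plainly $H(x,0)=f(x)$ and $H(x,1)=g(x)$, so once we know that $H$ takes values in $|T|$, it witnesses $f\simeq g$.

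The crux is to verify that $H(x,t)\in|T|$ for every $(x,t)$, and this is the one step that genuinely invokes the hypothesis. Fix $x\in X$ and let $\sigma$ be the unique (carrier) simplex of $T$ with $f(x)\in\inter(\sigma)$. By the second formulation in Definition \ref{modif}, the assumption that $g$ is a $T$-modification of $f$ gives $g(x)\in\sigma$. Since both $f(x)$ and $g(x)$ lie in the convex set $\sigma$, the whole segment $\{(1-t)f(x)+t\,g(x):t\in[0,1]\}$ is contained in $\sigma\subset|T|$. Hence $H$ maps into $|T|$, and because $|T|$ carries the subspace topology from $V$, $H$ is continuous as a map $X\times[0,1]\to|T|$, completing the homotopy $f\simeq g$. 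The finiteness of $T$ is what permits me to identify the weak and metric topologies and to treat $|T|$ as a subspace of a single finite-dimensional space, so that the continuity of $H$ follows at once and need not be verified skeleton-by-skeleton; everything beyond the carrier-simplex observation is routine linear-homotopy bookkeeping.
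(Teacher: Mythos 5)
Your proof is correct and takes exactly the route the paper intends: the remark immediately preceding Lemma \ref{modishotop} invokes the ``straight line'' homotopy, and your write-up spells out precisely that argument, with the carrier-simplex observation $g(x)\in\sigma$ plus convexity of $\sigma$ ensuring $H$ lands in $|T|$, and finiteness of $T$ identifying the weak, metric, and subspace topologies so that continuity of $H$ into $|T|$ is immediate. Nothing further is needed.
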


\begin{definition}\label{simpnbhd}For each simplicial complex
$T$ and nonempty subset $D\subset|T|$, we shall denote by
$N_{D,T}$ the subcomplex of $T$ consisting of the simplexes of
$T$ that intersect $D$ and all faces of such simplexes.  This
is the {\bf simplicial neighborhood} of $D$ in $T$.
\end{definition}

Let us review the notion of {\it extensor} (\cite{Hu}).  A
space $K$ is an {\it absolute neighborhood extensor} for a
space $X$, written $K\in\mathrm{ANE}(X)$, if each map of a
closed subspace $A$ of $X$ to $K$ extends to a map of a
neighborhood of $A$ in $X$ to $K$. An $\ANR$, absolute
neighborhood retract, is a metrizable
space that is an absolute neighborhood extensor for any
metrizable space.

We state a
version of (R1) from page 74 of \cite{MS} that will be suitable
for our purposes.

\begin{lemma}\label{MarSegRes}Let $\mathbf{Y}=(Y_i,g_i^{i+1})$
be an inverse sequence of metrizable compacta,
$Y=\lim\mathbf{Y}$, $P$ an $\ANR$ with metric $d$, $\mu:X\to P$
a map, and $\epsilon>0$. Then there exists $i$ such that for
all $j\geq i$, there is a map $f:Y_j\to P$ with $d(f\circ
g_{j,\infty},\mu)<\epsilon$.
\end{lemma}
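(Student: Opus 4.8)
The plan is to represent the inverse limit concretely and then exploit the extension property of the ANR $P$. First I would realize $Y=\lim\mathbf{Y}$ as a closed subspace of the product $M=\prod_i Y_i$, which is a compact metrizable space (a countable product of compact metrizable spaces); I fix a metric $\rho$ on $M$ of the usual form $\rho(z,z')=\sum_i 2^{-i} d_i(z_i,z_i')/(1+d_i(z_i,z_i'))$, where $d_i$ is a metric on $Y_i$, so that two points of $M$ agreeing on their first $j$ coordinates are within $2^{-j}$ of each other. Here $\mu$ is understood to be defined on $Y$, since $d(f\circ g_{j,\infty},\mu)$ is only meaningful when $\mu$ and $f\circ g_{j,\infty}$ share the domain $Y$. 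Since $P$ is an ANR, it is an absolute neighborhood extensor for the metrizable space $M$, so $\mu:Y\to P$ extends to a map $\hat\mu:U\to P$ on some open neighborhood $U$ of $Y$ in $M$.

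Next I would introduce the ``partial thread'' sets $A_i=\{(z_k)\in M: g_k^{k+1}(z_{k+1})=z_k \text{ for all } k<i\}$. These are compact, decreasing in $i$, and satisfy $\bigcap_i A_i=Y$. Hence the compact sets $A_i\setminus U$ decrease to $\emptyset$, so $A_{i_0}\subset U$ for some $i_0$. Restricting $\hat\mu$ to the compact set $A_{i_0}$ yields a uniformly continuous map; I let $\delta>0$ be a modulus of uniform continuity for $\epsilon$, and then choose $i\geq i_0$ with $2^{-i}<\delta$. I claim this $i$ works for every $j\geq i$.

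For each $j\geq i$ I would build an explicit continuous section $s_j:Y_j\to A_j$ of the $j$-th coordinate projection: fixing any point $(b_k)_{k>j}\in\prod_{k>j}Y_k$ (nonempty unless $Y=\emptyset$, in which case the lemma is vacuous), set $s_j(z)=(g_{1,j}(z),\dots,g_{j-1,j}(z),z,b_{j+1},b_{j+2},\dots)$. A direct check shows $s_j(z)\in A_j\subset A_{i_0}\subset U$ and that $s_j$ is continuous, so I may define $f=\hat\mu\circ s_j:Y_j\to P$. To verify the estimate, note that for a thread $y=(y_k)\in Y$ we have $g_{j,\infty}(y)=y_j$, and $s_j(y_j)$ agrees with $y$ on the first $j$ coordinates (because $y$ is a thread, $y_k=g_{k,j}(y_j)$ for $k\leq j$); hence $\rho(s_j(y_j),y)\leq 2^{-j}\leq 2^{-i}<\delta$. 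Since both points lie in $A_{i_0}$, uniform continuity gives $d\bigl(f(g_{j,\infty}(y)),\mu(y)\bigr)=d\bigl(\hat\mu(s_j(y_j)),\hat\mu(y)\bigr)<\epsilon$, as required.

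The main obstacle is passing from the ``approximate factorization'' (the observation that $\mu$ is nearly constant on the fibers of $g_{j,\infty}$ once $j$ is large) to an honest continuous map $f:Y_j\to P$ defined on all of $Y_j$, not merely on the image $g_{j,\infty}(Y)$. The section $s_j$, used together with the extension $\hat\mu$ over a product neighborhood, is what resolves this cleanly; it also makes the conclusion hold simultaneously for all $j\geq i$ rather than for a single index.
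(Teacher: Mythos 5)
Your proof is correct. There is, however, no proof in the paper to compare it against: Lemma \ref{MarSegRes} is stated as a version of property (R1) from page 74 of \cite{MS} and quoted without argument, so you have in effect supplied the proof the paper outsources. Your route is the standard one underlying that citation in the compact metric case: embed $Y$ in $M=\prod_i Y_i$, extend $\mu$ over an open neighborhood $U$ using that the ANR $P$ is an absolute neighborhood extensor for metrizable spaces, note that the partial-thread compacta $A_i$ decrease to $Y$ so that $A_{i_0}\subset U$ for some $i_0$ by compactness, and compose the extension with an explicit section $s_j:Y_j\to A_j$. The details check out: $s_j(z)$ does lie in $A_j\subset A_{i_0}\subset U$ (using $j\geq i\geq i_0$, and the fixed tail $(b_k)$ is harmless since $A_j$ only constrains coordinates below $j$); and $\rho(s_j(y_j),y)\leq 2^{-j}<\delta$ because for a thread $y$ the two points agree in the first $j$ coordinates. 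Two cosmetic remarks. First, the statement's $\mu:X\to P$ is a typo for $\mu:Y\to P$, which you interpreted correctly. Second, your pointwise estimate $d\bigl(f(g_{j,\infty}(y)),\mu(y)\bigr)<\epsilon$ does give the strict sup-metric inequality $d(f\circ g_{j,\infty},\mu)<\epsilon$, but only because $Y$ is compact, so the supremum is attained; if you prefer not to invoke that, run the argument with $\epsilon/2$. Your closing observation is also the right diagnosis: the section $s_j$ together with the extension over a product neighborhood is exactly what converts the near-constancy of $\mu$ on fibers of $g_{j,\infty}$ into an honest map defined on all of $Y_j$, uniformly for all $j\geq i$, which is the precise strengthening the paper needs when it applies this lemma.
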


\begin{lemma}\label{closearehomtop}Let $K$ be a $\CW$-complex.
Then $K$ has an open cover $\mathcal{V}$ such that any two
$\mathcal{V}$-close maps of any space to $K$ are homotopic.
\end{lemma}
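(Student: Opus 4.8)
The plan is to deduce this from the \emph{local equiconnectedness} of CW-complexes. Recall that two maps $f,g\colon Z\to K$ are $\mathcal{V}$-close when for every $z\in Z$ there is a member of $\mathcal{V}$ containing both $f(z)$ and $g(z)$. The key point is that if one can produce a single open neighborhood $U$ of the diagonal $\Delta_K=\{(x,x):x\in K\}$ in $K\times K$ together with a map $\lambda\colon U\times[0,1]\to K$ satisfying $\lambda(x,y,0)=x$, $\lambda(x,y,1)=y$ (and, for good measure, $\lambda(x,x,t)=x$), then the lemma follows by taking $\mathcal{V}$ to be any open cover refining the condition ``$V\times V\subseteq U$''. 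Indeed, given such $\mathcal{V}$-close maps $f,g$, for each $z$ the pair $(f(z),g(z))$ lies in some $V\times V\subseteq U$, so $z\mapsto(f(z),g(z))$ corestricts to a continuous map $Z\to U$; composing with $\lambda$ yields $H\colon Z\times[0,1]\to K$, $H(z,t)=\lambda(f(z),g(z),t)$, which is a homotopy from $f$ to $g$.

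First I would record the required cover. For each $k\in K$, since $(k,k)\in U$ and $U$ is open in the product, there are open sets $A,B\subseteq K$ with $k\in A\cap B$ and $A\times B\subseteq U$; put $V_k=A\cap B$. Then $V_k$ is an open neighborhood of $k$ with $V_k\times V_k\subseteq U$, so $\mathcal{V}=\{V_k:k\in K\}$ is an open cover of $K$ with the desired property. With $\mathcal{V}$ in hand, the homotopy construction of the previous paragraph goes through verbatim, using only that $z\mapsto(f(z),g(z))$ factors continuously through $U$ and that $\lambda$ is continuous on $U\times[0,1]$.

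Thus everything reduces to producing $U$ and $\lambda$, i.e.\ to the fact that a CW-complex is locally equiconnected; this is the one genuinely nontrivial input, and I expect it to be the main obstacle. It is a classical theorem: for simplicial complexes it is elementary via the convex structure on simplexes, and the extension to arbitrary CW-complexes, handled carefully with respect to the weak topology, is due to Fritsch and Piccinini. If one prefers to avoid quoting it, the neighborhood $U$ and the homotopy $\lambda$ can be built by induction over the skeleta $K^{(n)}$: one equiconnects over a neighborhood of the diagonal of $K^{(n)}\times K^{(n)}$, extends over $K^{(n+1)}$ by means of the characteristic maps of the $(n+1)$-cells, and finally passes to $K$ through the weak topology, verifying continuity cell by cell. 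Either way, once $U$ and $\lambda$ exist the argument above completes the proof.
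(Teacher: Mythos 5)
Your proposal is correct, but it takes a genuinely different route from the paper's. The paper never works on $K$ directly: it picks a homotopy equivalence $f\colon K\to|L|_m$ with a metric polyhedron, quotes Hu's theorems that $|L|_m$ is an ANR and that a metric ANR admits an open cover $\mathcal{W}$ with the stated property, and sets $\mathcal{V}=f^{-1}(\mathcal{W})$; then $\mathcal{V}$-close maps $g_1,g_2\colon Z\to K$ have $f\circ g_1$ and $f\circ g_2$ $\mathcal{W}$-close, hence homotopic, and composing with a homotopy inverse of $f$ gives $g_1\simeq g_2$. You instead stay on $K$ itself and use local equiconnectedness, obtaining the homotopy by the explicit formula $H(z,t)=\lambda(g_1(z),g_2(z),t)$; your derivation of the cover $\{V_k\}$ from the diagonal neighborhood $U$ and the corestriction argument are complete and correct, so the whole lemma does reduce to the single classical input you name. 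In that sense the two proofs are parallel: each rests on one quotable theorem --- Hu's ANR result for the paper, the LEC property of CW-complexes (Fritsch--Piccinini; alternatively built up via the Dyer--Eilenberg adjunction theorem for LEC spaces, which is essentially your skeletal induction) for you. One point you should make explicit: since the lemma quantifies over \emph{all} domains $Z$, you need $U$ open and $\lambda$ continuous for the \emph{ordinary} product topology on $K\times K$, not merely the compactly generated or CW product; otherwise the corestriction $z\mapsto(g_1(z),g_2(z))\in U$ and the composition with $\lambda$ could fail to be continuous for non-compactly-generated $Z$. The cited LEC theorem is stated in the required form, so your argument stands. What each approach buys: yours gives a direct, uniform homotopy on $K$ itself and avoids any detour through the metric topology, while the paper's is shorter on the page, needing only the standard facts that every CW-complex is homotopy equivalent to a metric polyhedron and that such polyhedra are ANRs, plus the (implicit) two-sided use of the homotopy equivalence to transport the conclusion back to $K$.
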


\begin{proof}There exists a simplicial complex $L$ such that
$|L|_m$, that is $|L|$ with the metric topology, is homotopy
equivalent to $K$.  Choose a homotopy equivalence
$f:K\to|L|_m$. By Theorem III.11.3 (page 106) of \cite{Hu},
$|L|_m$ is an ANR.  Theorem IV.1.1 (page 111) of \cite{Hu}
shows that there is an open cover $\mathcal{W}$ of $|L|_m$
having the property that any two $\mathcal{W}$-close maps of
any space to $|L|_m$ are homotopic.  The open cover needed for
$K$ is $\mathcal{V}=f\inv(\mathcal{W})$.
\end{proof}

There is a relatively standard technique that can be used to help
detect when the limit of an inverse sequence of metrizable compacta
is an absolute co-extensor for a given $\CW$-complex.  Here are the
needed concepts.

\begin{proposition}\label{sequenceextension}Let $\mathbf{Z}=
(Z_i,g_i^{i+1})$ be an inverse sequence of nonempty compact
metrizable spaces, $Z=\lim\mathbf{Z}$, and $K$ a $\CW$-complex.
Suppose that for each $i\in\N$, closed subset $D$ of $Z_i$, and
map $f:D\to K$, there exist $j\geq i$ and a map $g:Z_j\to K$
such that for all $x\in(g_i^j)\inv(D)$, $g(x)=f\circ g_i^j(x)$.
Then $Z\tau K$.\qed
\end{proposition}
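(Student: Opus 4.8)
The goal is to verify $Z\tau K$, which by definition means every map $f\colon A\to K$ from a closed subset $A\subset Z$ extends over all of $Z$. The plan is to reduce this extension problem over the inverse limit to the hypothesis, which is an extension statement at a single finite stage $Z_i$. The key tool will be Lemma \ref{MarSegRes}, which lets me approximate a map defined on $Z=\lim\mathbf{Z}$ by a map that factors through some $Z_j$, together with Lemma \ref{closearehomtop}, which converts ``sufficiently close'' into ``homotopic.''

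**Main steps.** First I would fix a closed $A\subset Z$ and a map $f\colon A\to K$. Let $\mathcal V$ be the open cover of $K$ supplied by Lemma \ref{closearehomtop}, so that any two $\mathcal V$-close maps into $K$ are homotopic. Since $K$ is a $\CW$-complex it is an ANE for metrizable spaces (via the ANR $|L|_m$ in the proof of Lemma \ref{closearehomtop}), so $f$ extends to a map $\mu$ on a neighborhood, and after shrinking we may regard $\mu$ as defined on an open set or approximate it through the ANR $P=|L|_m$. I would then apply Lemma \ref{MarSegRes} to obtain an index $i$ and a map $\nu\colon Z_i\to K$ whose composite $\nu\circ g_{i,\infty}$ is $\mathcal V$-close to (a representative of) $\mu$, hence homotopic to it on $Z$, and in particular $\nu\circ g_{i,\infty}$ is homotopic to $f$ on $A$. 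Restricting $\nu$ to a suitable closed subset $D\subset Z_i$ with $g_{i,\infty}\inv(D)\supset A$, I would feed the pair $(D,\nu|_D)$ into the hypothesis to get $j\geq i$ and a map $g\colon Z_j\to K$ agreeing with $\nu\circ g_i^j$ on $(g_i^j)\inv(D)$. Pulling $g$ back via $g_{j,\infty}$ gives a global map $G=g\circ g_{j,\infty}\colon Z\to K$ that, on $A$, agrees with $\nu\circ g_{i,\infty}$ and is therefore homotopic to $f|_A$.

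**Finishing via the homotopy extension property.** At this point $G$ is a globally defined map on $Z$ whose restriction to $A$ is only \emph{homotopic} to $f$, not equal to it. To repair this I would invoke the homotopy extension property: since $K$ is an ANE (being homotopy equivalent to the ANR $|L|_m$) and $A$ is closed in the metrizable space $Z$, the pair $(Z,A)$ has the homotopy extension property with respect to $K$. The homotopy from $G|_A$ back to $f$ on $A$ then extends to a homotopy of $G$ on all of $Z$, and its terminal map is the desired extension of $f$ over $Z$. This establishes $Z\tau K$.

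**Main obstacle.** I expect the delicate point to be the bookkeeping that ties the approximation $\nu$ on $Z_i$ to the closed subset $D$ fed into the hypothesis: one must choose $D$ closed in $Z_i$ so that $A\subset g_{i,\infty}\inv(D)$ while keeping $\nu|_D$ close enough to $f$ along $A$ that the two are $\mathcal V$-close and hence homotopic. Because $A$ need not be of the form $g_{i,\infty}\inv(D)$, some care is required in selecting $D$ (for instance using a neighborhood of $g_{i,\infty}(A)$ in $Z_i$) and in arranging that the closeness estimate survives the passage from $Z_i$ to $Z_j$ and back up to $Z$. Everything else is a routine assembly of the cited lemmas and the standard homotopy extension argument.
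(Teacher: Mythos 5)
The paper offers no proof of this proposition---it is stated with a \qed\ as part of ``a relatively standard technique''---so your proposal must stand on its own. Its overall architecture (approximate through a finite stage via Lemma \ref{MarSegRes}, feed the result into the hypothesis, pull back to $Z$, and finish with the homotopy extension theorem) is indeed the standard argument. But there is a genuine flaw in the middle step: you apply Lemma \ref{MarSegRes} to obtain a map $\nu\colon Z_i\to K$ defined on \emph{all} of $Z_i$ with $\nu\circ g_{i,\infty}$ $\mathcal{V}$-close to $\mu$. That lemma only approximates a map defined on the entire limit of an inverse sequence, whereas your $\mu$ is defined only on a neighborhood of $A$ in $Z$; no map on all of $Z$ near $f$ is available---producing one is essentially what is being proved. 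Indeed, if such a globally defined $\nu$ could be had, then $G=\nu\circ g_{i,\infty}\colon Z\to K$ would satisfy $G|_A\simeq f$, and your own homotopy-extension finish would yield $Z\tau K$ \emph{without ever invoking the hypothesis}; the proposition would then be unconditionally true, which it is not (take $Z_i=I^i$, so $Z=I^\infty$, and $K=S^1$). Any correct execution must produce the approximating map only on a closed subset of some $Z_j$---which is exactly the shape of datum the hypothesis consumes---and your ``main obstacle'' paragraph, which frames the difficulty as bookkeeping in the choice of $D$ while still presuming $\nu$ exists on all of $Z_i$, does not quite reach this point.

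The repair is short. Put $A_i=g_{i,\infty}(A)$; then $(A_i,g_i^{i+1}|A_{i+1})$ is an inverse sequence of compacta whose limit is $A$ (each $A_i$ is closed in $Z_i$, and a routine compactness argument identifies the limit with $A$). Since $f(A)$ is compact, it lies in a finite subcomplex $P$ of $K$, a compact ANR, so there is $\epsilon>0$ such that $\epsilon$-close maps into $P$ are homotopic in $K$; this replaces (or refines) your use of Lemma \ref{closearehomtop}. Applying Lemma \ref{MarSegRes} to $(A_i)$ with $\mu=f\colon A\to P$ gives $j$ and $h\colon A_j\to P$ with $h\circ g_{j,\infty}|A$ $\epsilon$-close, hence homotopic, to $f$. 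Now feed $D=A_j$ and $h$ into the hypothesis to get $l\geq j$ and $g\colon Z_l\to K$ with $g=h\circ g_j^l$ on $(g_j^l)\inv(A_j)$; since $g_j^l\circ g_{l,\infty}(z)=g_{j,\infty}(z)\in A_j$ for $z\in A$, the map $G=g\circ g_{l,\infty}\colon Z\to K$ restricts on $A$ to $h\circ g_{j,\infty}|A\simeq f$. Your concluding homotopy-extension step is then correct, provided you run it inside a finite subcomplex containing the image of the homotopy (so Borsuk's theorem for compact ANRs applies) or cite outright that $\CW$-complexes are ANEs for metrizable spaces---your parenthetical ``via the ANR $|L|_m$'' is not by itself a justification, since homotopy equivalence does not transfer the ANE property without the very homotopy extension fact at issue. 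With these corrections your proof is sound and is the standard one the paper leaves to the reader.
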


\begin{definition}\label{baseofclosed}Let $Z$ be a nonempty space and
$\mathcal{B}$ a collection of nonempty closed subsets of $Z$. We shall
call $\mathcal{B}$ a {\bf base} for the closed subsets of
$Z$ provided that for each closed subset
$C$ of $Z$ and neighborhood $U$ of $C$ in $Z$, there exists
$A\in\mathcal{B}$ with $C\subset\inter_Z A\subset A\subset U$.
\end{definition}

\begin{lemma}\label{countclosed}Every compact metrizable space has
a countable base for its closed subsets.\qed
\end{lemma}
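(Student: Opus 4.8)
The plan is to exploit the fact that a compact metrizable space is second countable and regular, and to build the desired base out of closures of finite unions of basic open sets. First I would fix a countable base $\mathcal{O}=\{O_1,O_2,\dots\}$ for the open subsets of $Z$, discarding the empty set so that each $O_k$ is nonempty. I would then set
\[
\mathcal{B}=\{\,\overline{W}\mid W \text{ is a nonempty finite union of members of }\mathcal{O}\,\}.
\]
Since there are only countably many finite subsets of a countable set, $\mathcal{B}$ is countable, and each of its members is a nonempty closed subset of $Z$. The single observation that makes this family serve as a base for the closed subsets is that whenever $W$ is open we have $W\subset\inter_Z\overline{W}$, because $W$ is an open set contained in its own closure; consequently any closed set squeezed between $W$ and $\overline{W}$ lies in the interior of the member $\overline{W}$ of $\mathcal{B}$.

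Next I would verify the defining property of Definition \ref{baseofclosed}. Let $C$ be a nonempty closed subset of $Z$ and $U$ a neighborhood of $C$; we may assume $U$ is open. Using regularity of the metrizable space $Z$ together with second countability, for each $x\in C$ I would choose a basic open set $O_{k(x)}\in\mathcal{O}$ with $x\in O_{k(x)}$ and $\overline{O_{k(x)}}\subset U$. The sets $\{O_{k(x)}\}_{x\in C}$ form an open cover of $C$, and since $C$ is a closed subset of the compactum $Z$ it is itself compact, so finitely many of them, say $O_{k_1},\dots,O_{k_m}$, already cover $C$. Setting $W=O_{k_1}\cup\dots\cup O_{k_m}$, I get a nonempty finite union of members of $\mathcal{O}$ with $C\subset W$ and $\overline{W}=\overline{O_{k_1}}\cup\dots\cup\overline{O_{k_m}}\subset U$. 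Then $A:=\overline{W}\in\mathcal{B}$ satisfies $C\subset W\subset\inter_Z\overline{W}=\inter_Z A\subset A=\overline{W}\subset U$, which is exactly what is required.

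Most of this is routine once the family $\mathcal{B}$ is chosen correctly; the only point that needs genuine care is arranging $C\subset\inter_Z A$ rather than merely $C\subset A\subset U$, and this is precisely why I take $A$ to be the closure of an \emph{open} set $W$ that still contains $C$. The compactness of $C$ is what lets me pass from the pointwise choice of small basic neighborhoods (whose closures sit inside $U$) to a single finite union, and hence to a single member of $\mathcal{B}$; without compactness one could not in general bound the closure of the union inside $U$. The degenerate case $C=\emptyset$ is inessential for the applications and, for a nonempty open $U$, is handled by taking any member $\overline{O}\in\mathcal{B}$ with $O\subset\overline{O}\subset U$, which exists by the same regularity argument.
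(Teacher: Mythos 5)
Your proof is correct, and it is the standard argument (closures of finite unions of basic open sets, using regularity plus compactness of $C$) that the authors evidently have in mind, since the paper states Lemma \ref{countclosed} with a \qed{} and no proof at all. Your closing remark about the degenerate case $C=\emptyset$ appropriately flags the only (harmless) friction with the nonemptiness requirement in Definition \ref{baseofclosed}.
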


\begin{definition}\label{fixedcountbase}For each compact metrizable
space $X$, let $\mathcal{B}(X)$ designate a fixed countable base
for the closed subsets of $X$.
\end{definition}

\begin{proposition}\label{baseextension}Let $\mathbf{Z}=(Z_i,g_i^{i+1})$ be
an inverse sequence of compact metrizable spaces,
$Z=\lim\mathbf{Z}$, and $K$ a $\CW$-complex.  Suppose that for
each $i\in\N$, $D\in\mathcal{B}(Z_i)$, and map $f:D\to K$,
there exist $j\geq i$ and a map $g:Z_j\to K$ such that for all
$x\in(g_i^j)\inv(D)$, $g(x)=f\circ g_i^j(x)$.  Then $Z\tau
K$.\qed
\end{proposition}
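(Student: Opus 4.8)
The plan is to deduce this from Proposition \ref{sequenceextension}. That proposition draws the same conclusion $Z\tau K$ from a formally stronger hypothesis, one quantified over \emph{all} closed subsets $D$ of each $Z_i$ rather than only over the members of the countable base $\mathcal{B}(Z_i)$. So it suffices to show that the present, weaker hypothesis already forces the hypothesis of Proposition \ref{sequenceextension} to hold. The bridge between the two is a neighborhood-extension property: a map defined on an arbitrary closed set can be pushed out to a neighborhood and then cut back down to a member of the base, on which the given hypothesis applies.

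In detail, I would fix $i\in\N$, an arbitrary closed subset $C$ of $Z_i$, and a map $f:C\to K$, seeking $j\geq i$ and a map $g:Z_j\to K$ with $g(x)=f\circ g_i^j(x)$ for all $x\in(g_i^j)\inv(C)$. Since $C$ is a closed subset of the compactum $Z_i$ it is itself compact, so its image $f(C)$ lies in some finite subcomplex $K_0$ of $K$; being a finite CW-complex, $K_0$ is a compact metrizable ANR, hence an absolute neighborhood extensor for the metrizable space $Z_i$. Regarding $f$ as a map into $K_0$, I would extend it to a map $\bar f:U\to K_0\subset K$ on some open neighborhood $U$ of $C$ in $Z_i$. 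Then, applying the defining property of a base for the closed subsets (Definition \ref{baseofclosed}) to $C$ and $U$, I would obtain $A\in\mathcal{B}(Z_i)$ with $C\subset\inter_{Z_i}A\subset A\subset U$. The restriction $\bar f|_A:A\to K$ is a map on a member of $\mathcal{B}(Z_i)$, so the present hypothesis furnishes $j\geq i$ and a map $g:Z_j\to K$ with $g(x)=(\bar f|_A)\circ g_i^j(x)$ for all $x\in(g_i^j)\inv(A)$.

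Finally I would verify that this same $g$ works for $C$ and $f$: if $x\in(g_i^j)\inv(C)$ then $g_i^j(x)\in C\subset A$, so $x\in(g_i^j)\inv(A)$ and $g(x)=\bar f(g_i^j(x))$; since $g_i^j(x)\in C$ and $\bar f$ extends $f$, this equals $f(g_i^j(x))$. Hence $g(x)=f\circ g_i^j(x)$ throughout $(g_i^j)\inv(C)$, which is precisely the hypothesis of Proposition \ref{sequenceextension}, and that proposition then yields $Z\tau K$. I do not anticipate a serious obstacle: the one substantive ingredient is the neighborhood extension of $f$, which the observation that $f(C)$ lands in a finite subcomplex reduces to the elementary fact that a finite CW-complex is an ANR (hence an ANE for metrizable spaces, by the paper's own definition of ANR); the rest is a containment chase. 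The single point needing care is the order of operations: one must extend $f$ over a neighborhood $U$ \emph{before} selecting the base element $A$, so that $A$ can be chosen inside $U$ where $\bar f$ is already defined.
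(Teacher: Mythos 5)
Your proposal is correct and is exactly the argument the paper leaves implicit (the proposition is stated without a written proof, as an immediate consequence of Proposition \ref{sequenceextension} and Definition \ref{baseofclosed}): extend $f$ over a neighborhood $U$ of $C$ using that the compact image $f(C)$ lies in a finite subcomplex, which is an ANR and hence an ANE for $Z_i$, then shrink to a base element $A\in\mathcal{B}(Z_i)$ inside $U$ and apply the hypothesis there. Your closing caution about the order of operations (extend first, then choose $A\subset U$) is precisely the right point of care, and the containment chase is complete.
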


We need to organize certain collections of homotopy classes.  Here is the
fundamental fact.

\begin{lemma}\label{counthomotop}For each compact metrizable space $X$
and compact polyhedron $P$, the set $[X,P]$ of homotopy classes of maps
of $X$ to $P$ is countable.\qed
\end{lemma}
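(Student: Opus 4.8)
The plan is to reduce the arbitrary compact metrizable domain $X$ to a polyhedral one and then to control homotopy classes between finite polyhedra by simplicial approximation. First I would invoke the Freudenthal representation recalled in the Introduction to write $X=\lim\mathbf{X}$, where $\mathbf{X}=(X_i,p_i^{i+1})$ is an inverse sequence of compact polyhedra with projections $p_{i,\infty}:X\to X_i$. Since $P$ is a finite simplicial complex, $|P|$ carries the metric topology and is an $\ANR$, and, regarded as a $\CW$-complex, Lemma \ref{closearehomtop} furnishes an open cover $\mathcal{V}$ of $P$ such that any two $\mathcal{V}$-close maps of any space into $P$ are homotopic. By compactness of $P$ I would fix a Lebesgue number $\lambda>0$ for $\mathcal{V}$, so that any two maps $\varphi,\psi$ of a space into $P$ with $d(\varphi,\psi)<\lambda$ in the supremum metric are automatically $\mathcal{V}$-close, hence homotopic.

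The approximation step comes next. Given any map $f:X\to P$, I would apply Lemma \ref{MarSegRes} to the sequence $\mathbf{X}$, the $\ANR$ $P$, the map $f$, and $\epsilon=\lambda$, obtaining an index $i$ and a map $h:X_i\to P$ with $d(h\circ p_{i,\infty},f)<\lambda$. By the previous paragraph $h\circ p_{i,\infty}\simeq f$. Thus every homotopy class in $[X,P]$ lies in the image of some precomposition map $p_{i,\infty}^{\,*}:[X_i,P]\to[X,P]$, so that $[X,P]=\bigcup_{i\in\N}\im p_{i,\infty}^{\,*}$. Since a countable union of countable sets is countable, and $\lvert\im p_{i,\infty}^{\,*}\rvert\le\lvert[X_i,P]\rvert$, it suffices to prove that $[Q,P]$ is countable whenever $Q$ and $P$ are compact polyhedra.

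For this final, and to my mind principal, point I would fix finite triangulations of $Q$ and $P$ and appeal to the simplicial approximation theorem: every map $|Q|\to|P|$ is homotopic to the realization of a simplicial map defined on some iterated barycentric subdivision of the triangulation of $Q$. For each fixed subdivision the domain is a finite complex, so the set of simplicial maps from it to $P$ is finite, being bounded by the number of functions from the vertex set of the subdivision to the vertex set of $P$. Letting the number of subdivisions range over $\N$ yields only countably many such simplicial maps in total, and the assignment of each to its homotopy class is a surjection onto $[Q,P]$; hence $[Q,P]$ is countable. Combining this with the reduction above gives the countability of $[X,P]$.

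The main obstacle, or rather the crux, is the polyhedral case $[Q,P]$: the reduction from $X$ to the $X_i$ is a routine assembly of Lemmas \ref{MarSegRes} and \ref{closearehomtop}, whereas the countability of homotopy classes between finite polyhedra genuinely rests on simplicial approximation together with the finiteness of simplicial maps between finite complexes and the countability of the family of iterated barycentric subdivisions.
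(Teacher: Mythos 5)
Your proof is correct. Note first that the paper offers no argument of its own for Lemma \ref{counthomotop}: it is stated with a terminal \qed\ as a standard fact, so there is no in-paper proof to compare against. Your write-up supplies one, and every step checks out: Freudenthal's representation of $X$ as $\lim(X_i,p_i^{i+1})$, the application of Lemma \ref{MarSegRes} with $\epsilon$ taken to be a Lebesgue number $\lambda$ of the cover from Lemma \ref{closearehomtop} (legitimate, since the finite polyhedron $P$ is a compact metric ANR and maps at sup-distance $<\lambda$ are $\mathcal{V}$-close, hence homotopic), the resulting decomposition $[X,P]=\bigcup_{i\in\N}\im p_{i,\infty}^{\,*}$, and the classical polyhedral case: by simplicial approximation each map $|Q|\to|P|$ is homotopic to a simplicial map on some $\mathrm{Sd}^m Q$, there are finitely many simplicial maps for each $m$ (a simplicial map is determined by its vertex assignment), and countably many $m$. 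One remark worth making: the first half of your machinery already yields the lemma without Freudenthal or simplicial approximation. The Lebesgue-number observation shows that each homotopy class is open in the space $C(X,P)$ of maps with the sup metric; since $X$ and $P$ are compact metrizable, $C(X,P)$ is a separable metric space, hence admits at most countably many pairwise disjoint nonempty open sets, and $[X,P]$ is countable. Your longer route does buy something extra --- it shows every homotopy class factors up to homotopy through some finite polyhedron $X_i$, which is very much in the spirit of the resolution techniques the paper builds on --- but for the bare countability statement the separability argument is shorter and uses only your second paragraph's ingredients.
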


\begin{definition}\label{selectclass}For each compact metrizable space $X$
and compact polyhedron $P$, select a countable collection $\mathcal{H}(X,P)$
consisting of one representative from each homotopy class in $[X,P]$.
\end{definition}

\section[Extensor Lemma]
{Extensor Lemma}\label{ExtenLem}

For the remainder of this paper, $I^\infty$ will denote the
Hilbert cube, i.e., $I^\infty=\prod\{I_i\,|\,i\in\N\}$ where
$I_i=I$ for each $i$.  For each $j\in\N$, we factor $I^\infty$ as
$I^j\times I^\infty_j$.  Let $0_j$
denote the element of $I^\infty_j$ each of whose coordinates is $0$.
If $P\subset I^j$, then we may treat $P$ as
$P\times\{0_j\}\subset I^\infty$.  The context should make this clear
when we apply it.  In this setting, an element of $P$ becomes
the element of $I^\infty$ whose first $j$ coordinates are the ones it
inherits from $P$ and whose remaining coordinates are all $0$.

We shall use the metric $\rho$ on
$I^\infty$ given by
$\rho(x,y)=\sum_{i=1}^{\infty}\frac{\vert x_i-y_i\vert}{2^i}$.
For each $k\in\N$, $p_{k,\infty}:I^\infty\to I^k$ will denote
the $k$-coordinate projection map,
and if $j\leq k$, we will use $p_j^k:I^k\to I^j$ for the
$j$-coordinate projection map. In case $x\in I^k$, then according to
our convention $x=(x,0,0,\dots)\in I^\infty$, so one
has that $p_j^k(x)=p_{j,\infty}(x)\in I^j$.

The main result of this section is Lemma \ref{endset}.  It provides us
with a statement, see (3), about extending a map under the condition
that a given compactum $X$ has been embedded in $I^\infty$.

\begin{lemma}\label{intersectinH} Let $X\subset I^\infty$ be
compact and nonempty.  Then there exist an increasing
sequence $(n_j)$ in $\N$ with $n_1=1$,
and a sequence $(P_j)$ of compact polyhedra $P_j\subset I^{n_j}$,
such that:

\begin{enumerate}
\item for all $j\in\N$, $X\subset\inter_{I^\infty}(P_j\times I^\infty_{n_j})
\subset N(X,\frac{2}{j})$, and\item if $j\in\N_{\geq2}$, then
$p_{n_{j-1}}^{n_j}(P_j)\subset\inter_{I^{n_{j-1}}}P_{j-1}$.
\end{enumerate}
\end{lemma}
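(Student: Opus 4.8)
The plan is to construct the sequences $(n_j)$ and $(P_j)$ inductively, exploiting compactness of $X$ together with the fact that the open sets $P \times I^\infty_{n}$ form a neighborhood basis of $X$ that we can shrink as finely as we wish. The core idea is that a compact $X \subset I^\infty$ can be trapped inside arbitrarily thin ``tube'' neighborhoods obtained by pulling back a polyhedral neighborhood of the projection $p_{n,\infty}(X) \subset I^n$ back to $I^\infty$; as $n$ grows, the projection $p_{n,\infty}$ separates points of $X$ more and more, so these tubes shrink in $\rho$-diameter transverse to the image, giving the containment in $N(X, 2/j)$.

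First I would handle the base case $j=1$: set $n_1 = 1$, and choose $P_1 \subset I^{1}$ to be a compact polyhedron (a finite union of closed subintervals) with $p_{1,\infty}(X) \subset \inter_{I^1} P_1$, which automatically gives $X \subset \inter_{I^\infty}(P_1 \times I^\infty_{n_1})$; the containment in $N(X,\tfrac{2}{1})$ is vacuous since $2/1 = 2$ exceeds the $\rho$-diameter of $I^\infty$. For the inductive step, assume $n_{j-1}$ and $P_{j-1}$ are chosen. The key estimate is that the ``tail'' of the metric is uniformly small: since $\rho(x,y) = \sum_i |x_i - y_i|/2^i$, for any $x,y$ agreeing in their first $k$ coordinates we have $\rho(x,y) \le \sum_{i>k} 2^{-i} = 2^{-k}$. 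I would pick $n_j > n_{j-1}$ large enough that $2^{-n_j}$ is small, specifically so the transverse fiber direction contributes less than, say, $1/j$ to distances. Then, using compactness of $X$ and the openness of $\inter_{I^{n_{j-1}}} P_{j-1}$, I would choose a compact polyhedron $P_j \subset I^{n_j}$ with
\[
p_{n_j,\infty}(X) \subset \inter_{I^{n_j}} P_j \quad\text{and}\quad P_j \text{ thin enough around } p_{n_j,\infty}(X),
\]
so that every point of $P_j \times I^\infty_{n_j}$ lies within $2/j$ of $X$. Here ``thin'' means I regularize a small closed neighborhood of $p_{n_j,\infty}(X)$ to a polyhedron (e.g. via a simplicial neighborhood of the projected image inside a fine triangulation of $I^{n_j}$, as in Definition~\ref{simpnbhd}), shrinking its mesh until the resulting tube sits inside $N(X,2/j)$.

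To secure condition (2), namely $p_{n_{j-1}}^{n_j}(P_j) \subset \inter_{I^{n_{j-1}}} P_{j-1}$, I would enforce it during the selection of $P_j$: since $p_{n_{j-1}}^{n_j} \bigl(p_{n_j,\infty}(X)\bigr) = p_{n_{j-1},\infty}(X) \subset \inter_{I^{n_{j-1}}} P_{j-1}$ by the previous step, and $p_{n_{j-1}}^{n_j}$ is continuous, the preimage $(p_{n_{j-1}}^{n_j})\inv(\inter_{I^{n_{j-1}}} P_{j-1})$ is an open neighborhood of $p_{n_j,\infty}(X)$ in $I^{n_j}$. I would therefore require that the thin polyhedral neighborhood $P_j$ be chosen inside this open set, which is possible by taking the mesh fine enough. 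Thus both the diameter control (for (1)) and the projection control (for (2)) are simultaneous constraints on how small a neighborhood of $p_{n_j,\infty}(X)$ to regularize.

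The main obstacle I anticipate is the simultaneous fulfillment of the two transverse requirements in the inductive step: the tube $P_j \times I^\infty_{n_j}$ must be thin enough in the $I^\infty_{n_j}$ directions to land in $N(X,2/j)$ (controlled by choosing $n_j$ large, so the metric tail is negligible) while $P_j$ itself must hug $p_{n_j,\infty}(X)$ closely enough in $I^{n_j}$ both to stay within $N(X,2/j)$ in the first $n_j$ coordinates and to project into $\inter_{I^{n_{j-1}}} P_{j-1}$. The delicate point is that shrinking $P_j$ toward $p_{n_j,\infty}(X)$ does not by itself force the tube into $N(X,2/j)$, because two points of $X$ with nearly equal first $n_j$ coordinates may be far apart in $I^\infty$; this is exactly why $n_j$ must be chosen first and large, so that near-agreement on the first $n_j$ coordinates genuinely implies $\rho$-proximity. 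Verifying that such an $n_j$ exists uses the uniform continuity consequence of compactness of $X$, and is where the argument must be made carefully, though it is otherwise routine.
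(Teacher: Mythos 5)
Your proposal is correct and takes essentially the same route as the paper's proof: induction on $j$, choosing $n_j$ first and large enough that the metric tail estimate $\sum_{i>n_j}2^{-i}=2^{-n_j}$ forces the tube over $p_{n_j,\infty}(X)$ into $N(X,\frac{2}{j})$, then selecting $P_j$ as a compact polyhedral neighborhood of $p_{n_j,\infty}(X)$ shrunk simultaneously to stay inside the open set securing (1) and inside $(p_{n_{j-1}}^{n_j})\inv(\inter_{I^{n_{j-1}}}P_{j-1})$ to secure (2). The only cosmetic differences are that the paper takes $P_1=I^1$ outright and names the intermediate open set $V$ explicitly, while you describe the regularization via simplicial neighborhoods; the substance is identical.
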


\begin{proof}
Put $n_1=1$ and $P_1=I^1$.  Then $(1)$ is true
in case $j=1$ since $\diam I^\infty\leq1$, and $(2)$
is true vacuously. Proceed by induction.  Suppose that
$j\in\N$, and we have found finite sequences $n_1<\dots<n_j$
in $\N$ and compact polyhedra $P_1,\dots, P_j$ such that
for $1\leq s\leq k$, $P_s\subset I^{n_s}$, $(1)$ is true
up to $j$ and $(2)$ is true up to $j-1$ in case $1<j$.

One may choose $n_{j+1}\in\N$ such that $n_{j+1}>n_j$ and $X\subset
p_{n_{j+1},\infty}(X)\times I^\infty_{n_{j+1}}\subset N(X,\frac{2}{j+1})$.
There is a neighborhood $V$ of $p_{n_{j+1},\infty}(X)$ such that
$V\times I^\infty_{n_{j+1}}\subset N(X,\frac{2}{j+1})$. Choose a compact
polyhedron $P_{j+1}$ of $I^{n_{j+1}}$ so that,
$p_{n_{j+1},\infty}(X)\subset\inter_{I^{n_{j+1}}} P_{j+1}
\subset P_{j+1}\subset V$. This gives us $(1)$ for $j+1$.

Notice that $(1)$ for $j$ implies,
$p_{n_j,\infty}(X)=p_{n_j}^{n_{j+1}}\circ p_{n_{j+1},\infty}(X)\subset
\inter_{I^{n_j}}P_j$. Hence,
$p_{n_{j+1},\infty}(X)\subset(p_{n_j}^{n_{j+1}})^{-1}(\inter_{I^{n_j}}(P_j))$.
Thus, making $P_{j+1}$ smaller if necessary, we may have $(1)$ and
simultaneously, $P_{j+1}\subset(p_{n_j}^{n_{j+1}})^{-1}(\inter_{I^{n_j}}(P_j))$.
This achieves $(2)$ for $j+1$.
\end{proof}

The condition (2) of Lemma \ref{intersectinH} shows that if we replace
$j$ by $j+1$, we find that $p_{n_j}^{n_{j+1}}(P_{j+1})\subset
\inter_{I^{n_j}}P_j$.  This implies that $P_{j+1}\times I_{n_{j+1}}^\infty
\subset(\inter_{I^{n_j}}P_j)\times I_{n_j}^\infty$.  This and (1) of
Lemma \ref{intersectinH} lead us to the next piece of information.

\begin{corollary}\label{picture1}In the setting of
$\mathrm{Lemma\,\ref{intersectinH}}$,
\begin{enumerate}\item for each $j\in\N$, $P_{j+1}\times I^\infty
_{n_{j+1}}\subset\inter_{I^\infty}(P_j\times I^\infty_{n_j})$, and
\item $X=\bigcap\{P_j\times I^\infty_{n_j}\,|\,j\in\N\}$.\qed
\end{enumerate}
\end{corollary}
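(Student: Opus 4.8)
The plan is to derive both statements directly from Lemma \ref{intersectinH}, using one auxiliary observation about the interior of a ``cylinder'' set in $I^\infty$. The observation is this: for any $P\subset I^{n}$, one has $\inter_{I^\infty}(P\times I^\infty_{n})=(\inter_{I^{n}}P)\times I^\infty_{n}$. This holds because $I^\infty=I^{n}\times I^\infty_{n}$ as a product in which the second factor appears in full; a point $(x,y)$ lies in the interior of $P\times I^\infty_{n}$ precisely when $x$ lies in $\inter_{I^{n}}P$, since no constraint is imposed on the $y$-coordinate, and a basic neighborhood in $I^\infty$ restricts only finitely many coordinates.

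For (1), I would apply condition (2) of Lemma \ref{intersectinH} at index $j+1$ to obtain $p_{n_j}^{n_{j+1}}(P_{j+1})\subset\inter_{I^{n_j}}P_j$. Passing to the preimage under the projection and crossing with $I^\infty_{n_{j+1}}$ yields $P_{j+1}\times I^\infty_{n_{j+1}}\subset(\inter_{I^{n_j}}P_j)\times I^\infty_{n_j}$, exactly as already noted in the paragraph preceding the corollary. By the auxiliary observation the right-hand side equals $\inter_{I^\infty}(P_j\times I^\infty_{n_j})$, which is the desired inclusion.

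For (2), I would use condition (1) of Lemma \ref{intersectinH}, which supplies, for every $j$, the chain $X\subset\inter_{I^\infty}(P_j\times I^\infty_{n_j})\subset P_j\times I^\infty_{n_j}\subset N(X,\tfrac{2}{j})$. The leftmost inclusions show $X\subset\bigcap_j(P_j\times I^\infty_{n_j})$. For the reverse inclusion, the rightmost inclusion gives $\bigcap_j(P_j\times I^\infty_{n_j})\subset\bigcap_j N(X,\tfrac{2}{j})$; since $\tfrac{2}{j}\to0$ and $X$ is compact, hence closed, this last intersection is $X$ itself, completing the equality.

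The proof is essentially a matter of assembling inclusions already provided by Lemma \ref{intersectinH}, so I do not expect a genuine obstacle. The only point that requires care is the cylinder-interior identity, and the crux there is simply to verify that taking the interior in $I^\infty$ does not shrink the full factor $I^\infty_{n}$; once that is in hand, both parts follow immediately.
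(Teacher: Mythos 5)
Your proposal is correct and takes essentially the same route as the paper, which proves the corollary in the paragraph immediately preceding its statement via exactly your cylinder-interior identity $\inter_{I^\infty}(P\times I^\infty_n)=(\inter_{I^n}P)\times I^\infty_n$ applied to Lemma \ref{intersectinH}(2), together with the inclusions of Lemma \ref{intersectinH}(1) and compactness of $X$ for part (2). One small imprecision: Lemma \ref{intersectinH}(1) as stated bounds only $\inter_{I^\infty}(P_j\times I^\infty_{n_j})$ by $N(X,\frac{2}{j})$, not the full cylinder $P_j\times I^\infty_{n_j}$ as your chain asserts, but your own part (1) repairs this at once, since $P_{j+1}\times I^\infty_{n_{j+1}}\subset\inter_{I^\infty}(P_j\times I^\infty_{n_j})\subset N(X,\frac{2}{j})$ still traps the total intersection inside $\bigcap_j N(X,\frac{2}{j})=X$.
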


In reading (3) of the ensuing lemma, one should consult (2) of
Lemma \ref{intersectinH} to see that whenever $j\leq l$, then
$p_{n_j}^{n_l}(P_l)\subset P_j$.

\begin{lemma}\label{endset}Let $X\subset I^\infty$,
$(n_j)$, $(P_j)$ be as in
$\mathrm{Lemma\,\,\ref{intersectinH}}$, and $K$ be a $\CW$-complex
with $X\tau K$.  Suppose that $j\in\N$ and $B_j$ is a closed
subset of $P_j$.  For each $k\geq j$, let $B_k=
(p_{n_j}^{n_k})\inv(B_j)\cap P_k$
and put $B_{j,\infty}=p_{n_j,\infty}\inv(B_j)\cap X$.
The following are true.
\begin{enumerate}\item For any open
neighborhood $S$ of $B_{j,\infty}$ in $I^\infty$, there exists $k\geq j$ such
that for all $l\geq k$, $B_l\subset S$.\footnote{Statement (1) is true
independently of the condition $X\tau K$.}
\item If $f:B_j\to K$ is
a map, then there exists $k\geq j$ such that for all $l\geq k$, there
is a map $f^*:P_l\to K$ that extends the composition $f\circ
p_{n_j}^{n_l}|B_l:B_l\to K$ where we treat $p_{n_j}^{n_l}
|B_l:B_l\to B_j$.\item Suppose that $j\in\N$,
$T_j$ is a triangulation of $P_j$,
$L$ is a subcomplex of $T_j$, and that $|N_{|L|,T_j}|$,
$N_{|L|,T_j}$ being the simplicial
neighborhood of $|L|$ in $T_j$,
is a regular neighborhood of $|L|$ in $|T_j|$.  Assume that
$k\geq j$ is as in $(2)$ with $B_j=|L|$, $l\geq k$,
and $g:P_l\to|T_j|$ is a map which is a $T_j$-modification of
$p_{n_j}^{n_l}|P_l:P_l\to P_j=|T_j|$.
Let $f:|L|\to K$ be a map and $E=g\inv(|L|)\subset P_l$. Then there is a map
$g^*:P_l\to K$ that extends the composition $f\circ g|E:E\to K$.
\end{enumerate}
\end{lemma}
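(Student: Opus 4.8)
The plan is to avoid working directly with $E=g\inv(|L|)$, which is only closed and need not be a subpolyhedron of $P_l$, and instead to extend over the genuine subpolyhedron $W=(p_{n_j}^{n_l})\inv(|N|)\cap P_l$, where $N=N_{|L|,T_j}$; for the polyhedral pair $(P_l,W)$ the homotopy extension property holds with respect to the arbitrary target $K$. Write $p=p_{n_j}^{n_l}|P_l:P_l\to|T_j|$, a PL map with $p(P_l)\subset P_j$. First I would record three consequences of $g$ being a $T_j$-modification of $p$ (Definition \ref{modif}): given $x\in P_l$, let $\tau$ be the simplex of $T_j$ with $p(x)\in\inter(\tau)$, so that $g(x)\in\tau$. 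If $p(x)\in|L|$ then $\tau$ is a simplex of $L$, so $g(x)\in|L|$; hence $B_l:=p\inv(|L|)\subset E$. If $g(x)\in|L|$ then $\tau$ meets $|L|$, so $\tau$ is a simplex of $N$ and $p(x)\in|N|$; hence $E\subset W$. If $p(x)\in|N|$ then $\tau\subset|N|$, so $g(x)\in|N|$; hence $g(W)\subset|N|$.

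Next I would invoke the regular-neighborhood hypothesis: since $|N|$ is a regular neighborhood of $|L|$ in $|T_j|$, there is a retraction $r:|N|\to|L|$ together with a deformation retraction $R:|N|\times I\to|N|$ from $\id$ to $r$ that is stationary on $|L|$. Put $\tilde f=f\circ r:|N|\to K$, an extension of $f$. By the third containment, $\tilde f\circ g$ is defined on $W$, and for $x\in E$ we have $g(x)\in|L|$, whence $\tilde f(g(x))=f(g(x))$; thus $\tilde f\circ g\big|W:W\to K$ already extends $f\circ g\big|E$ over $W$. It therefore suffices to extend $\tilde f\circ g\big|W$ over all of $P_l$.

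To do this I would first produce a map on $P_l$ whose restriction to $W$ is merely homotopic to $\tilde f\circ g\big|W$. Applying part (2) of the lemma to the closed set $|N|\subset P_j$ and the map $\tilde f$ yields a map $F:P_l\to K$ with $F\big|W=\tilde f\circ p\big|W$ (this may require replacing $k$ by a larger threshold that also serves $|N|$, which is harmless in the applications). The straight-line homotopy $H:P_l\times I\to|T_j|$ from $p$ to $g$ (Lemma \ref{modishotop}) keeps $H(x,t)$ in the simplex $\tau$ above for every $t$; hence $H(W\times I)\subset|N|$ by the third containment, and $\Theta(x,t)=f\big(r(H(x,t))\big)$ defines a homotopy $\Theta:W\times I\to K$ from $\Theta_0=\tilde f\circ p\big|W=F\big|W$ to $\Theta_1=\tilde f\circ g\big|W$.

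Finally, $W$ is a subpolyhedron of $P_l$, being the preimage of the subpolyhedron $|N|$ under the PL map $p$, so $(P_l,W)$ is a cofibration and the homotopy extension property holds with respect to $K$. Extending $\Theta$ to $\widehat F:P_l\times I\to K$ with $\widehat F_0=F$, I would set $g^*=\widehat F_1$; then $g^*\big|W=\Theta_1=\tilde f\circ g\big|W$, so in particular $g^*\big|E=f\circ g\big|E$, as required. I expect the main obstacle to be precisely the passage from $E$ to $W$: because $g$ is only a $T_j$-modification of $p$, it may send points with $p(x)\notin|L|$ into $|L|$, so $E$ is in general strictly larger than $B_l$ and is not polyhedral. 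The regular-neighborhood retraction $r$ is what makes everything fit—it both extends $f\circ g\big|E$ canonically over $W$ and, through the straight-line homotopy, aligns that extension with the map $F$ furnished by part (2).
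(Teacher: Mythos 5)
Your proposal proves only part (3) of the lemma: part (1) is never addressed, and part (2) is invoked as a black box (``Applying part (2) of the lemma\dots''). That is a genuine gap, because the statement asserts all three items, and part (2) is the crux --- it is the only place where the hypothesis $X\tau K$ enters, and it does not follow formally from anything you wrote. In the paper's proof of (2), the map $f\circ p_{n_j,\infty}|B_{j,\infty}$ is extended, using $X\tau K$ together with the fact that a $\CW$-complex is an absolute neighborhood extensor for metrizable spaces, to a map $h:U\to K$ on an open neighborhood $U$ of $X$ in $I^\infty$; then part (1) --- itself a nontrivial compactness argument resting on Corollary \ref{picture1} --- is applied twice to force the sets $B_l$ (and the polyhedra $P_l$) into carefully chosen neighborhoods $S$ and $S_1$; finally a star-refinement argument with the cover $\mathcal{V}$ of Lemma \ref{closearehomtop} shows that $f\circ p_{n_j}^{n_l}|B_l$ and $h|B_l$ are $\mathcal{V}$-close, hence homotopic, after which the homotopy extension theorem converts $h|P_l$ into the required $f^*$. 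None of this machinery is recoverable from your text, so as a proof of the lemma as stated, your proposal establishes only the reduction of (3) to (2).

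That said, your argument for part (3) is correct and is essentially the paper's own route: extend $f$ over the regular neighborhood via the retraction $r$, apply (2) with $B_j=|N_{|L|,T_j}|$ and the extended map, observe that the $T_j$-modification property yields $p_{n_j}^{n_l}(E)\subset|N_{|L|,T_j}|$ and confines the straight-line homotopy between $p_{n_j}^{n_l}$ and $g$ to $|N_{|L|,T_j}|$, and finish with the homotopy extension theorem. Your two refinements are sound but unnecessary: working over the subpolyhedron $W=(p_{n_j}^{n_l})\inv(|N_{|L|,T_j}|)\cap P_l$ rather than the merely closed set $E$ buys nothing, since the homotopy extension theorem already applies to the closed pair $(P_l,E)$ with the ANE target $K$, which is exactly what the paper does; and your observation that the threshold $k$ must be the one (2) supplies for $|N_{|L|,T_j}|$ rather than for $|L|$ is correct and is precisely the content of the paper's ``no loss of generality'' step at the start of its proof of (3). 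To complete the assignment you would need to supply proofs of (1) and (2) along the lines sketched above.
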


\begin{proof}Let $S$ be an
open neighborhood of $B_{j,\infty}$ in
$I^\infty$.  If the conclusion of (1) is not true, then there
is an increasing sequence $(m_i)$ in $\N$, $m_1\geq j$, so that
for each $i$, there exists $b_i\in B_{m_i}\setminus S
\subset P_{m_i}$.  Passing to a subsequence if necessary,
we may assume that the sequence $(b_i)$ in the compactum
$I^\infty\setminus S$ converges in $I^\infty$ to $b\in I^\infty\setminus S$.
Applying Corollary \ref{picture1}(1,2) along with the fact
that $b_i\in P_{m_i}$, one sees that $b\in X
\setminus B_{j,\infty}$,
from which we deduce that $p_{n_j,\infty}(b)\notin B_j$.

For each $i$, $p_{n_j}^{s_i}(b_i)=p_{n_j,\infty}(b_i)\in B_j$,
$s_i=n_{m_i}$.
Hence $\{p_{n_j,\infty}(b_i)\,|\,i\in\N\}\subset B_j$.  Since
$B_j$ is closed in $P_j$, $p_{n_j,\infty}$ is a map, and
$(b_i)$ converges to $b$, then $p_{n_j,\infty}(b)\in B_j$, a
contradiction.  This yields (1).  Now to prove (2).

Employing Lemma \ref{closearehomtop},
select an open cover $\mathcal{V}$ of $K$
such that for any space $Y$, any maps $g_1:Y\to K$ and $g_2:Y\to K$
that are $\mathcal{V}$-close are homotopic.  Let $\mathcal{V}_1$
be an open cover of $K$ that star-refines $\mathcal{V}$.
Choose an open cover $\mathcal{W}$ of $B_j$ such that if
$W\in\mathcal{W}$, then there
exists $V_W \in\mathcal{V}_1$ with $f(W)\subset V_W$.

Observe that $B_{j,\infty}$ is a closed subset of $X$ and
that $p_{n_j,\infty}(B_{j,\infty})\subset B_j$.
Since $X\tau K$, then the map $f\circ p_{n_j,\infty}
|B_{j,\infty}:B_{j,\infty}\to K$
extends to a map $h:U\to K$ where $U$ is an open neighborhood
of $X$ in $I^\infty$.  Select an open cover $\mathcal{R}$ of
$X$ in $U$ having the property that for each
$R\in\mathcal{R}$, there exists $V_R\in\mathcal{V}_1$ with
$h(R)\subset V_R$.  Let $S=\bigcup\mathcal{R}\subset U$.  Then
$S$ is an open neighborhood of $X$ in $I^\infty$.  So
by (1), we may choose $k\in\N$ so that for all $l\geq k$,
$B_l\subset S$.  Using Corollary \ref{picture1}(1,2), we may
also require that for such $l$, $P_l=P_l\times\{0_{n_l}\}\subset S$.

Put $B^*=B_{j,\infty}\cup\bigcup\{B_l\,|\,l\geq k\}\subset S$.  Then
of course $p_{j,\infty}:B^*\to B_j$ is a map.
For each $b\in B_{j,\infty}$,
select a neighborhood $E_b$ of $b$ in $B^*$ such that $p_{j,\infty}(E_b)$
is contained in an element $W_b$ of $\mathcal{W}$ and that in
addition, there exists $R_b\in\mathcal{R}$ with $E_b\subset R_b$.
Let $S_0=\bigcup\{E_b\,|\,b\in B_{j,\infty}\}$.  Then $S_0$ is an
open neighborhood of $B_{j,\infty}$ in $B^*\subset I^\infty$.  So
there is an open subset $S_1$ of $I^\infty$ having the property
that $S_1\cap B^*=S_0$.  Plainly, $S_1$ is an open neighborhood
of $B_{j,\infty}$ in $I^\infty$.
An application of (1) with $S_1$ in place of $S$ gives
us
the existence of a $k_1\geq k$ so that for all $l\geq k_1$,
we have $B_l\subset S_1$.
But then $B_l\subset S_1\cap B^*=S_0$.

We are going to show that
$f\circ p_{n_j}^{n_l}|B_l:B_l\to K$ is homotopic to $h|B_l:B_l\to K$.
For in that case, if we define $h_0=h|B_l:B_l\to K$, then of course
since $P_l\subset S$, $h_0$ extends to the map
$h|P_l:P_l\to K$, and the homotopy
extension theorem will complete our proof.

Let $x\in B_l$.  It will be sufficient to show that
$f\circ p_{n_j}^{n_l}(x)$ and $h_0(x)$
lie in an element of $\mathcal{V}$.
There exists $b\in B_{j,\infty}$ such that $x\in E_b$.  Now
$b\in E_b\subset R_b\in\mathcal{R}$.  It follows that there
is an element $V_1\in\mathcal{V}_1$ with $\{h(b),h(x)\}=
\{h(b),h_0(x)\}\subset V_1$.  One sees from the
definition of $h$ and the fact
that $b\in B_{j,\infty}$, that $h(b)=f\circ p_{j,\infty}(b)$.
So we have that $\{f\circ p_{j,\infty}(b),h_0(x)\}
\subset V_1\in\mathcal{V}_1$.
We know that $p_{j,\infty}(b)\in p_{j,\infty}(E_b)\subset W_b\in\mathcal{W}$.
Thus $f\circ p_{j,\infty}(b)\in f\circ p_{j,\infty}(E_b)
\subset f(W_b)\subset V_2$ for some $V_2\in\mathcal{V}_1$.
Now $x\in E_b\cap B_l\subset E_b\cap P_l\subset
E_b\cap I^{n_l}$, so $p_{j,\infty}(x)=p_{n_j}^{n_l}(x)\in
p_{j,\infty}(E_b)$, and we see that $f\circ p_{n_j}^{n_l}(x)\in
f\circ p_{j,\infty}(E_b)\subset V_2$.
Hence, $\{f\circ p_{j,\infty}(b),f\circ p_{n_j}^{n_l}(x)\}\subset
V_2$.  Since $\mathcal{V}_1$ is a star-refinement of $\mathcal{V}$,
$f\circ p_{j,\infty}(b)\in V_1\cap V_2$, $h_0(x)\in V_1$,
and $f\circ p_{n_j}^{n_l}(x)\in V_2$,
one may find $V\in\mathcal{V}$ with $\{f\circ p_{n_j}^{n_l}(x),h_0(x)\}
\subset V_1\cup V_2\subset V$.

Lastly, we prove (3). Since $|N_{|L|,T_j}|$ is a regular neighborhood
of $|L|$ in $|T_j|$ and hence $|L|$ is a retract of
this regular neighborhood, then there is no loss
of generality in assuming that $f:|N_{|L|,T_j}|\to K$.
Now we apply (2) with $B_j=|N_{|L|,T_j}|$. So there exists $k\geq j$
such that for all $l\geq k$,
there is a map $f^*:P_l\to K$ that extends the composition $f\circ
p_{n_j}^{n_l}|B_l:B_l\to K$ where $$B_l=
(p_{n_j}^{n_l})\inv(B_j)\cap P_l=(p_{n_j}^{n_l})
\inv(|N_{|L|,T_j}|)\cap P_l.$$

Here we treat $p_{n_j}^{n_l}|B_l:B_l\to B_j=|N_{|L|,T_j}|$. By
definition (see(3)), $$E=g\inv(|L|)\subset P_l.$$  Let us
demonstrate that,
\smallskip

$(\dag_1)$  $p_{n_j}^{n_l}(E)\subset|N_{|L|,T_j}|$.
\smallskip

Suppose, for the sake of contradiction, that $x\in E$
and $p_{n_j}^{n_l}(x)\in P_j\setminus|N_{|L|,T_j}|=
|T_j|\setminus|N_{|L|,T_j}|$.
There is a simplex $\sigma\in T_j$ with
$p_{n_j}^{n_l}(x)\in\inter(\sigma)$ and so that $\sigma\cap
|L|=\emptyset$.  Applying the fact that $g$ is a
$T_j$-modification of $p_{n_j}^{n_l}:P_l\to P_j$,
one sees that $g(x)\in\sigma$.  But then $g(x)\notin
|L|$, which is false.  So we get $(\dag_1)$.

The preceding and Lemma \ref{modishotop} show
that the maps $g|E:E\to|L|\subset|N_{|L|,T_j}|$ and
$p_{n_j}^{n_l}|E:E\to|N_{|L|,T_j}|$ are homotopic in $|N_{|L|,T_j}|$.
Hence the compositions $f\circ g|E:E\to K$ and
$f\circ p_{n_j}^{n_l}|E:E\to K$ are homotopic.

Using $(\dag_1)$, one has,
\smallskip

$(\dag_2)$  $E=g\inv(|L|)\cap P_l\subset
(p_{n_j}^{n_l})\inv(|N_{|L|,T_j}|)\cap P_l
=(p_{n_j}^{n_l})\inv(B_j)\cap P_l=B_l$.
\smallskip

The map $f^*:P_l\to K$ extends the composition $f\circ
p_{n_j}^{n_l}:B_l\to K$. Using this and $(\dag_2)$, one sees
that $f^*$ extends the composition $f\circ p_{n_j}^{n_l}|E:E\to K$.
So an application of the homotopy extension theorem gives us the
desired map $g^*$, completing our proof of (3).
\end{proof}

\section[Extension Dimension]
{Extension Dimension}\label{extendim}

In order to strengthen forthcoming results, we
are going to employ the notion of extension dimension.  The reader
can find a good exposition of this in \cite{IR}, but we will
provide all the necessary ideas in this section.

Let $\mathcal{C}$ be a class of spaces and $\mathcal{T}$ a class
of $\CW$-complexes.  For each $K\in\mathcal{T}$, there is
a subclass $[K]_{(\mathcal{C},\mathcal{T})}\subset\mathcal{T}$ which
is called the {\it extension type} of $K$ relative to
$(\mathcal{C},\mathcal{T})$.  Indeed, the extension types form a
decomposition of $\mathcal{T}$.  There is a partial order
$\leq_{(\mathcal{C},\mathcal{T})}$ on the class of extension
types (see page 384 of \cite{IR}). We are not going to explain
it here, but we shall indicate later
how this comes into play for us.

Henceforward, $\mathcal{C}$ will
be the class of metrizable compacta and $\mathcal{T}$ will
be the class of $\CW$-complexes.  We only need to mention that
metrizable spaces are stratifiable ((SP7) on p. 386 of \cite{IR}),
and hence the results of \cite{IR} will apply to our choice of
$\mathcal{C}$.

\begin{proposition}\label{inclusive}For all $X\in\mathcal{C}$
and $K\in\mathcal{T}$, $X\tau K$ if and only if
$X\tau L$ for all $L\in[K]_{(\mathcal{C},\mathcal{T})}$.
\end{proposition}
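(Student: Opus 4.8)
The plan is to reduce the statement to the definition of extension type recorded in \cite{IR}. Recall that the extension types relative to $(\mathcal{C},\mathcal{T})$ are the blocks of a decomposition of $\mathcal{T}$, organized by the partial order $\leq_{(\mathcal{C},\mathcal{T})}$, where $[K]_{(\mathcal{C},\mathcal{T})}\leq_{(\mathcal{C},\mathcal{T})}[L]_{(\mathcal{C},\mathcal{T})}$ precisely when $X\tau K$ implies $X\tau L$ for every $X\in\mathcal{C}$. Two $\CW$-complexes $K$ and $L$ therefore determine the same extension type exactly when both $[K]_{(\mathcal{C},\mathcal{T})}\leq_{(\mathcal{C},\mathcal{T})}[L]_{(\mathcal{C},\mathcal{T})}$ and $[L]_{(\mathcal{C},\mathcal{T})}\leq_{(\mathcal{C},\mathcal{T})}[K]_{(\mathcal{C},\mathcal{T})}$ hold. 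Unravelling these two inequalities, $L\in[K]_{(\mathcal{C},\mathcal{T})}$ if and only if, for every $X\in\mathcal{C}$, one has $X\tau K$ if and only if $X\tau L$. This symmetric reformulation is the only fact about \cite{IR} that the argument requires.

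With this dictionary in place, both implications are immediate. For the forward direction, suppose $X\tau K$ and let $L\in[K]_{(\mathcal{C},\mathcal{T})}$ be arbitrary. Since $X\in\mathcal{C}$ and $K,L$ share the same extension type, the equivalence just stated gives $X\tau L$; as $L$ was arbitrary, $X\tau L$ holds for all $L\in[K]_{(\mathcal{C},\mathcal{T})}$. For the converse, observe that $K\in[K]_{(\mathcal{C},\mathcal{T})}$, since the extension types form a decomposition of $\mathcal{T}$ and hence each $\CW$-complex lies in its own extension type. Thus, if $X\tau L$ for all $L\in[K]_{(\mathcal{C},\mathcal{T})}$, then specializing to $L=K$ yields $X\tau K$.

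The single point demanding care, which I would make explicit by citing the definition on page 384 of \cite{IR}, is the translation of ``$L$ belongs to the extension type of $K$'' into the symmetric $\tau$-equivalence over $\mathcal{C}$. Once that translation is recorded there is no residual obstacle: the forward direction is the definition applied to one complex $L$, and the backward direction is nothing more than the reflexivity of the extension-type relation. Notably, no property of metrizable compacta beyond the membership $X\in\mathcal{C}$ enters, so the proof is entirely formal within the framework of \cite{IR}.
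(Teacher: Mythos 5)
Your proof is correct and takes essentially the same route as the paper, which states Proposition \ref{inclusive} without argument as an immediate consequence of the definition of extension types in \cite{IR}: your write-up simply makes explicit that membership $L\in[K]_{(\mathcal{C},\mathcal{T})}$ means $\tau$-equivalence of $K$ and $L$ over all of $\mathcal{C}$, since the types are the classes of the equivalence relation induced by the preorder. Given that translation, the forward implication is the equivalence applied to each $L$ in the type, and the converse is reflexivity ($K\in[K]_{(\mathcal{C},\mathcal{T})}$, because the types decompose $\mathcal{T}$), exactly as you say; note also that the direction of the partial order $\leq_{(\mathcal{C},\mathcal{T})}$ is immaterial here, since only the symmetric equivalence enters.
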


On the basis of Proposition \ref{inclusive}, if $P$ is an extension type, then one usually writes
$X\tau P$ to mean that $X\tau K$ for all $K\in P$.
Fix $X\in\mathcal{C}$ and consider the class $\mathcal{E}(X)$
consisting of those extension types $P$ relative to
$(\mathcal{C},\mathcal{T})$ with $X\tau P$.  If
$(\mathcal{E}(X),\leq_{(\mathcal{C},\mathcal{T})})$ has an
initial element\footnote{An initial element $s_0\in S$ of a
partially ordered set $(S,\leq)$ is understood in the following
sense: for every $s\in S$, $s_0\leq s$.  Such $s_0$, if it
exists, is unique.}, then that element is called the {\it
extension dimension} of $X$ with respect to
$(\mathcal{C},\mathcal{T})$, written
$\extdim_{(\mathcal{C},\mathcal{T})}X$.
Here are the facts that are salient to us.

By Corollary 5.4 of \cite{IR},

\begin{proposition}\label{existextd}The extension dimension,
$\extdim_{(\mathcal{C},\mathcal{T})}X$, exists for
every metrizable compactum $X$.
\end{proposition}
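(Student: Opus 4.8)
The plan is to realize $\extdim_{(\mathcal C,\mathcal T)}X$ as the extension type of a single CW-complex built as a countable product, after showing that the subclass $\mathcal E(X)$ is well enough behaved as a poset to possess an initial element. Recall (from the definition of $\leq_{(\mathcal C,\mathcal T)}$ in \cite{IR}) that $[K]\leq_{(\mathcal C,\mathcal T)}[L]$ precisely when, for every $Y\in\mathcal C$, $Y\tau K$ implies $Y\tau L$; by Proposition \ref{inclusive} this is compatible with the convention of writing $X\tau P$ for extension types $P$. First I would record the elementary lattice input: if $X\tau K_i$ for a countable family of CW-complexes $(K_i)$, then $X\tau\prod_i K_i$, because a map $f\colon A\to\prod_i K_i$ on a closed $A\subset X$ has coordinates $f_i\colon A\to K_i$ that extend to $g_i\colon X\to K_i$, and $x\mapsto(g_i(x))$ then extends $f$. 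Using basepoints, the coordinate projections show $[\prod_i K_i]\leq_{(\mathcal C,\mathcal T)}[K_j]$ for every $j$. Thus $\mathcal E(X)$ is downward directed and closed under countable products, so each countable subfamily of $\mathcal E(X)$ has a lower bound lying in $\mathcal E(X)$.

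Granting these closure properties, the existence of an initial element reduces to a single separability statement: that $\mathcal E(X)$ admits a \emph{countable coinitial} subfamily $\{[K_i]\}$, meaning that for each $[K]\in\mathcal E(X)$ there is an $i$ with $[K_i]\leq_{(\mathcal C,\mathcal T)}[K]$. Given such a family, set $M=\prod_i K_i$. Then $X\tau M$ and, for every $[K]\in\mathcal E(X)$, $[M]\leq_{(\mathcal C,\mathcal T)}[K_i]\leq_{(\mathcal C,\mathcal T)}[K]$, so $[M]$ is the initial element of $(\mathcal E(X),\leq_{(\mathcal C,\mathcal T)})$ and is, by definition, $\extdim_{(\mathcal C,\mathcal T)}X$. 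One should also check that $M$, a countable product of CW-complexes, again carries an extension type in $\mathcal T$; this is where I would use that $M$ is homotopy equivalent to a CW-complex, in the spirit of Lemma \ref{closearehomtop}.

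The main obstacle is precisely the countable coinitiality, and it is genuinely a property of compact metrizable $X$ rather than a formal consequence of the order. It cannot be obtained merely by restricting to compact polyhedra: although there are only countably many homotopy types of compact polyhedra, for an $X$ with $\extdim_{(\mathcal C,\mathcal T)}X=[K(\Q,n)]$ no compact polyhedron $P$ can satisfy both $X\tau P$ and $[P]\leq_{(\mathcal C,\mathcal T)}[K(\Q,n)]$, so the minimal type need not be that of any polyhedron. To extract a countable coinitial family I would exploit the second countability of $X$: represent $X$ as the limit of an inverse sequence of compact polyhedra $(X_i,p_i^{i+1})$ (Freudenthal's theorem, cited in the Introduction), then use the countable base $\mathcal B(X)$ together with the countability of the homotopy-class sets $[X_i,P]$ from Lemma \ref{counthomotop} to encode the conditions $X\tau K$ through countably much data, thereby isolating countably many test complexes whose extension types are coinitial in $\mathcal E(X)$.

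This diagonalization — showing that only countably many extension types are needed to certify $X\tau K$ across all CW-complexes $K$ — is the technical heart of the argument, and it is exactly the content imported from Corollary 5.4 of \cite{IR} through the stratifiability of metrizable spaces noted above. I would therefore expect the bulk of the work to lie not in the product/lattice formalities but in establishing this countable coinitiality from the inverse-limit and second-countability structure of $X$.
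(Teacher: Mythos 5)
Your argument, as written, does not prove the proposition: at its self-declared ``technical heart'' --- the countable coinitiality of $\mathcal{E}(X)$ --- you import Corollary 5.4 of \cite{IR}, but that corollary \emph{is} the statement being proved, so the argument is circular, and once it is invoked the entire product/lattice scaffolding is redundant. (For comparison: the paper gives no internal proof either; Proposition \ref{existextd} is quoted directly from \cite{IR}, the only justification being the earlier remark that metrizable spaces are stratifiable, so the results of \cite{IR} apply to $\mathcal{C}$.) Note also that your reduction reduces nothing: given your closure-under-countable-meets observation, a countable coinitial family yields an initial element, but conversely an initial element is itself a one-element coinitial family, so coinitiality is \emph{equivalent} to the theorem. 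The independent sketch you offer for it (Freudenthal's inverse sequence, the base $\mathcal{B}(X)$, and Lemma \ref{counthomotop}) does not close the gap: Lemma \ref{counthomotop} only counts homotopy classes of maps into \emph{compact polyhedra}, and, as you yourself correctly observe, compact polyhedral types need not be coinitial in $\mathcal{E}(X)$. The conditions $X\tau K$ range over a proper class of $\CW$-complexes containing uncountably many homotopy types, and nothing in your sketch explains how countably many pieces of polyhedral data can certify, or even name, a coinitial family of test complexes among them. That diagonalization is precisely the content of \cite{IR} (and of the Dranishnikov--Dydak theory of extension types behind it), and it is missing here.

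There is also a genuine flaw in the scaffolding itself, though a repairable one. For a countably infinite family, $M=\prod_i K_i$ with the product topology need not be homotopy equivalent to any $\CW$-complex, so $[M]_{(\mathcal{C},\mathcal{T})}$ need not be defined. Concretely, $(S^1)^{\N}$ is compact, and a compact space of $\CW$ homotopy type is homotopy dominated by a finite complex (the image of a compact set in a $\CW$-complex lies in a finite subcomplex), which forces $\pi_1$ to be finitely generated; but $\pi_1\bigl((S^1)^{\N}\bigr)\cong\Z^{\N}$ is uncountable. Lemma \ref{closearehomtop} is of no help here: it produces covers of a given $\CW$-complex, it does not confer $\CW$ type on a product. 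The standard repair is to replace the full product by the weak product $M^{w}=\bigcup_{n}\prod_{i\leq n}K_i$ (basepoint inclusions, direct limit topology), which is an honest $\CW$-complex; since a closed $A\subset X$ is compact, any map $A\to M^{w}$ lands in a finite stage $\prod_{i\leq n}K_i$, so your coordinatewise extension argument still yields $X\tau M^{w}$, and the basepoint inclusions and projections still give $[M^{w}]\leq_{(\mathcal{C},\mathcal{T})}[K_j]$ for every $j$. With that fix the implication ``countable coinitial family $\Rightarrow$ initial element'' is sound order theory --- but the coinitiality itself remains unproved, so your proposal establishes the proposition only to the extent that it repeats the citation the paper already makes.
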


\begin{proposition}\label{substitute}For all $K$, $L\in\mathcal{T}$,
$[K]_{(\mathcal{C},\mathcal{T})}=[L]_{(\mathcal{C},\mathcal{T})}$
whenever $K\simeq L$.
\end{proposition}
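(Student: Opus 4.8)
The plan is to deduce the equality of extension types from a statement about the absolute co-extensor relation alone. By the way extension types are defined relative to $(\mathcal{C},\mathcal{T})$ in \cite{IR} (p.~384), and in agreement with Proposition \ref{inclusive}, two complexes $K,L\in\mathcal{T}$ satisfy $[K]_{(\mathcal{C},\mathcal{T})}=[L]_{(\mathcal{C},\mathcal{T})}$ if and only if, for every $X\in\mathcal{C}$, one has $X\tau K$ exactly when $X\tau L$. Since the hypothesis $K\simeq L$ is symmetric in $K$ and $L$, it therefore suffices to prove a single implication: if $K\simeq L$ and $X$ is a metrizable compactum with $X\tau K$, then $X\tau L$.

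First I would fix maps $\phi:L\to K$ and $\psi:K\to L$ realizing the homotopy equivalence, so that $\psi\circ\phi\simeq\id_L$. Given a closed subset $A$ of $X$ (necessarily compact) and a map $f:A\to L$, I form $\phi\circ f:A\to K$ and use $X\tau K$ to extend it to a map $g:X\to K$. Pulling back by $\psi$ produces $\psi\circ g:X\to L$, whose restriction to $A$ is $\psi\circ\phi\circ f$. Since $\psi\circ\phi\simeq\id_L$, this restriction is homotopic to $f$; fix a homotopy $\overline{H}:A\times I\to L$ with $\overline{H}_0=(\psi\circ g)|A$ and $\overline{H}_1=f$. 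At this point $\psi\circ g$ is a map of $X$ into $L$ whose restriction to $A$ is homotopic to $f$, so only one thing remains: to upgrade this ``extension up to homotopy'' to an honest extension of $f$ over $X$.

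That upgrade is a homotopy-extension step, and it is where the main obstacle lies: $L$ is an arbitrary $\CW$-complex and need not be an absolute neighborhood extensor for metrizable spaces, so the homotopy extension theorem is not available for the pair $(X,A)$ with target $L$. I would remove the obstacle using the compactness of $X$ and of $A\times I$. Their continuous images $\psi\circ g(X)$ and $\overline{H}(A\times I)$ are compact, hence each lies in a finite subcomplex of $L$; choosing a finite subcomplex $F$ of $L$ that contains both, $|F|$ is a compact polyhedron, hence a metrizable $\ANR$ and an absolute neighborhood extensor for the metrizable space $X$. Now $\psi\circ g:X\to|F|$ restricts on $A$ to $\overline{H}_0$, so applying the homotopy extension theorem with the $\ANR$ target $|F|$ extends $\overline{H}$ to a homotopy $G:X\times I\to|F|$ with $G_0=\psi\circ g$. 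The map $G_1:X\to|F|\subset L$ then satisfies $G_1|A=\overline{H}_1=f$, so $G_1$ extends $f$ and establishes $X\tau L$. Interchanging the roles of $K$ and $L$ gives the reverse implication, and by the reduction of the first paragraph the two extension types coincide.
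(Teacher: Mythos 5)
Your argument is correct, but there is nothing in the paper to compare it against: Proposition \ref{substitute}, like Propositions \ref{inclusive}, \ref{existextd} and \ref{polyrep}, is stated without proof and is imported from \cite{IR}, so what you have produced is a self-contained substitute for a citation. Your opening reduction is legitimate: in \cite{IR} the extension types relative to $(\mathcal{C},\mathcal{T})$ are exactly the classes of the equivalence relation ``$K$ and $L$ have the same absolute co-extensors in $\mathcal{C}$'' (the relation underlying the order $\leq_{(\mathcal{C},\mathcal{T})}$ that the paper declines to spell out), and this is consistent with Proposition \ref{inclusive}; so it does suffice to show that $K\simeq L$ and $X\tau K$ imply $X\tau L$. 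The extension step itself is the standard argument and is sound: extend $\phi\circ f$ over $X$ using $X\tau K$, observe $(\psi\circ g)|A=\psi\circ\phi\circ f\simeq f$, and upgrade by homotopy extension. One remark on your ``main obstacle'': it is not actually an obstacle. Every $\CW$-complex \emph{is} an absolute neighborhood extensor for metrizable --- indeed stratifiable --- spaces (Cauty's theorem, which is the reason the setting of \cite{IR} applies here at all, cf.\ the paper's appeal to (SP7) of \cite{IR}), so Borsuk's homotopy extension theorem could have been applied directly with target $L$. That said, your detour is perfectly valid and arguably preferable in this paper's economy: since $X\in\mathcal{C}$ is compact, the images $\psi\circ g(X)$ and $\overline H(A\times I)$ are compact and hence lie in a finite subcomplex $F$, and $|F|$ is a compact polyhedron, hence an $\ANR$ by the results of \cite{Hu} already invoked in Lemma \ref{closearehomtop}; the homotopy extension theorem for the metrizable pair $(X,A)$ with $\ANR$ target then yields $G_1:X\to|F|\subset L$ with $G_1|A=f$, exactly as you say. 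So your proof uses only compactness of members of $\mathcal{C}$ plus classical $\ANR$ theory, avoiding Cauty's theorem, at the cost of a mildly misleading justification for the detour. Symmetry in $K$ and $L$ then finishes the proof as claimed.
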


\begin{proposition}\label{polyrep}For every metrizable compactum
$X$, there exists a polyhedron $K$ such that $[|K|]_{(\mathcal{C},\mathcal{T})}=
\extdim_{(\mathcal{C},\mathcal{T})}X$.
\end{proposition}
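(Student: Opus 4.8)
The plan is to prove Proposition \ref{polyrep} by starting from the extension type $\extdim_{(\mathcal{C},\mathcal{T})}X$, which exists by Proposition \ref{existextd}, and producing a \emph{polyhedron} inside that type. Let $E=\extdim_{(\mathcal{C},\mathcal{T})}X$. Since extension types are subclasses of $\mathcal{T}$, the class of $\CW$-complexes, we may select some $K_0\in E$; concretely, $K_0$ is a $\CW$-complex with $[K_0]_{(\mathcal{C},\mathcal{T})}=E$. The goal is then to replace $K_0$ by a homotopy equivalent polyhedron, because Proposition \ref{substitute} guarantees that passing to a homotopy equivalent complex does not change the extension type.

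The key step is the standard fact from homotopy theory that \emph{every $\CW$-complex is homotopy equivalent to a simplicial complex} (equivalently, to the polyhedron of a simplicial complex). First I would invoke this result to obtain a simplicial complex $L$ with $|L|\simeq K_0$. Then by Proposition \ref{substitute}, $[|L|]_{(\mathcal{C},\mathcal{T})}=[K_0]_{(\mathcal{C},\mathcal{T})}=E=\extdim_{(\mathcal{C},\mathcal{T})}X$. Writing $K=L$, we have exhibited a polyhedron with $[|K|]_{(\mathcal{C},\mathcal{T})}=\extdim_{(\mathcal{C},\mathcal{T})}X$, which is exactly the assertion. One should note the mild abuse of terminology: the statement writes $[|K|]$, treating $K$ as a simplicial complex and $|K|$ as its polyhedron, so it suffices to supply a simplicial complex whose polyhedron lands in the right type.

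The only genuine content is the replacement of a $\CW$-complex by a homotopy equivalent polyhedron, and the main obstacle to a fully self-contained argument is justifying this replacement. There are two routes. The cleaner one cites the theorem that any $\CW$-complex admits a simplicial approximation up to homotopy equivalence (for instance via the fact that a $\CW$-complex is homotopy equivalent to the geometric realization of its singular simplicial set, or via the triangulation of $\CW$-complexes through iterated mapping-cylinder/barycentric techniques). The second route, which matches the style already used in the proof of Lemma \ref{closearehomtop}, is simply to recall that in that lemma the authors have \emph{already} produced, for any given $\CW$-complex $K_0$, a simplicial complex $L$ with $|L|_m$ homotopy equivalent to $K_0$; I would reuse that construction verbatim here, so that no new external machinery is needed beyond what the paper has already established.

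Everything else is bookkeeping: the existence of $E$ is Proposition \ref{existextd}, the homotopy invariance of extension type is Proposition \ref{substitute}, and the identification $[|L|]=E$ is immediate once $|L|\simeq K_0$ is in hand. Thus the proof is short, and its substance lives entirely in the homotopy-theoretic triangulation step, which I would cite or import from the earlier Lemma \ref{closearehomtop} rather than reprove.
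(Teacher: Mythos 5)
Your proposal is correct. Note, however, that the paper itself offers no proof of Proposition \ref{polyrep}: it is stated as one of the imported facts of extension-dimension theory, with the section's opening remark pointing the reader to \cite{IR} for the exposition. So there is no in-paper argument to compare against; what you have done is supply the standard proof that the authors leave implicit, and it is the right one. Your three ingredients are exactly what is needed: existence of the extension type $E=\extdim_{(\mathcal{C},\mathcal{T})}X$ (Proposition \ref{existextd}), the triangulation fact that every $\CW$-complex is homotopy equivalent to the polyhedron of a simplicial complex (which, as you observe, the paper already invokes without proof in Lemma \ref{closearehomtop}), and homotopy invariance of extension type (Proposition \ref{substitute}). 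One small point worth making explicit: Lemma \ref{closearehomtop} produces $|L|_m$, the polyhedron with the \emph{metric} topology, which need not be a $\CW$-complex, whereas Proposition \ref{substitute} is stated for members of $\mathcal{T}$; this is harmless because the identity $|L|\to|L|_m$ from the weak to the metric topology is a homotopy equivalence, so $|L|$ with the weak topology is a $\CW$-complex homotopy equivalent to $K_0$, and Proposition \ref{substitute} applies to it. With that one-line remark inserted, your argument is complete.
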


\begin{lemma}\label{therelation}Let $X$ be a metrizable compactum and $K$ a
$\CW$-complex with $[K]_{(\mathcal{C},\mathcal{T})}=
\extdim_{(\mathcal{C},\mathcal{T})}X$.
Then for every metrizable compactum $Y$, $Y\tau K$ implies that
$Y\tau L$ whenever $L$ is a $\CW$-complex and $X\tau L$.\qed
\end{lemma}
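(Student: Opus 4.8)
The plan is to unwind the definitions of extension type, extension dimension, and the partial order $\leq_{(\mathcal{C},\mathcal{T})}$. No geometric construction is needed: once one recalls how the order on extension types is defined, the statement is formal. So the work is essentially bookkeeping with Proposition \ref{inclusive} and the definition of an initial element.

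First I would fix a metrizable compactum $Y$ with $Y\tau K$ and a $\CW$-complex $L$ with $X\tau L$, the goal being $Y\tau L$. The first substantive step is to pass from the complex $L$ to its extension type. Since $X\tau L$, Proposition \ref{inclusive} gives $X\tau L'$ for every $L'\in[L]_{(\mathcal{C},\mathcal{T})}$, that is, $X\tau[L]_{(\mathcal{C},\mathcal{T})}$ in the convention introduced just after that proposition. Hence $[L]_{(\mathcal{C},\mathcal{T})}$ belongs to the class $\mathcal{E}(X)$ of extension types $P$ with $X\tau P$.

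Next I would invoke the hypothesis $[K]_{(\mathcal{C},\mathcal{T})}=\extdim_{(\mathcal{C},\mathcal{T})}X$. By definition $\extdim_{(\mathcal{C},\mathcal{T})}X$ is the initial element of $(\mathcal{E}(X),\leq_{(\mathcal{C},\mathcal{T})})$, so it lies below every member of $\mathcal{E}(X)$; in particular $[K]_{(\mathcal{C},\mathcal{T})}\leq_{(\mathcal{C},\mathcal{T})}[L]_{(\mathcal{C},\mathcal{T})}$. It remains only to read off what this order relation asserts. Here I would recall the definition of $\leq_{(\mathcal{C},\mathcal{T})}$ from page 384 of \cite{IR}: for extension types $P$ and $Q$, the relation $P\leq_{(\mathcal{C},\mathcal{T})}Q$ means that every $W\in\mathcal{C}$ with $W\tau P$ also satisfies $W\tau Q$. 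Applying this with $P=[K]_{(\mathcal{C},\mathcal{T})}$, $Q=[L]_{(\mathcal{C},\mathcal{T})}$, and $W=Y$ (which lies in $\mathcal{C}$, the class of metrizable compacta) turns the given $Y\tau K$ into $Y\tau L$, as desired.

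The only genuine content lies in this last step, namely in identifying the partial order $\leq_{(\mathcal{C},\mathcal{T})}$ with the co-extensor implication ``$W\tau K\Rightarrow W\tau L$ over $\mathcal{C}$''; this is precisely where, as promised in Section \ref{extendim}, the order on extension types comes into play, and I expect it to be the main (indeed essentially the only) obstacle, since everything else is routine. A useful sanity check on the orientation of the order is the sphere case: for $\dim X=n$ one has $\extdim_{(\mathcal{C},\mathcal{T})}X=[S^n]_{(\mathcal{C},\mathcal{T})}$, the members of $\mathcal{E}(X)$ are the types $[S^m]_{(\mathcal{C},\mathcal{T})}$ with $m\geq n$, and $[S^n]_{(\mathcal{C},\mathcal{T})}\leq_{(\mathcal{C},\mathcal{T})}[S^m]_{(\mathcal{C},\mathcal{T})}$ corresponds to the true implication $Y\tau S^n\Rightarrow Y\tau S^m$, confirming that the order is oriented as used above.
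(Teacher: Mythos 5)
Your proof is correct and is exactly the argument the paper intends: the lemma is stated with \qed precisely because it follows formally from Proposition \ref{inclusive}, the initiality of $\extdim_{(\mathcal{C},\mathcal{T})}X$ in $(\mathcal{E}(X),\leq_{(\mathcal{C},\mathcal{T})})$, and the definition of the order on extension types in \cite{IR}, which you have recalled with the correct orientation (as your sphere sanity check confirms).
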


\section[Simplicial Resolution]
{Simplicial Resolution}\label{simpres}

Often when a space is given as the limit of an inverse system,
that inverse system is called a ``resolution'' of the space. It
has been shown by S. Marde\v si\' c (see \cite{Ma}) that in
general it is impossible to resolve a metrizable compactum by
an inverse sequence of triangulated compact polyhedra in which
each bonding map is simplicial with respect to these
triangulations.  But having such a resolution might be valuable
in extension theory.  Starting with a nonempty metrizable
compactum $X$ we are going to prove the existence of a certain
``simplicial'' inverse sequence (see Definition
\ref{inducedinvseq}).  This will be an important step in
reaching our goal of finding a ``replacement'' of the original
space $X$ for the purpose of extension theory. The construction
that yields what we want is found in Lemma
\ref{enhancedbigpicture}.

\begin{definition}\label{homotopyclasses}For a given nonempty
simplicial complex $K$, let $\mathcal{F}(K)$ be a countable set of finite
simplicial complexes having the property that for each $N\in\mathcal{F}(K)$
there exists a subcomplex $N^*$ of $K$ that is simplicially
isomorphic to $N$ and such that for each finite subcomplex
$E$ of $K$, there exists an element $N$ of $\mathcal{F}(K)$ that
is simplicially isomorphic to $E$.
\end{definition}

\begin{lemma}\label{enhancedbigpicture} Let $X\subset I^\infty$ be
compact and nonempty and $K$ be a simplicial complex such that
$[|K|]_{(\mathcal{C},\mathcal{T})}=
\extdim_{(\mathcal{C},\mathcal{T})}X$.
Select a bijective
function $\eta:\N\to\N\times\N$, denote
\smallskip

$(*_1)$  $\eta(j)=(s_j,t_j)$ for each $j\in\N$,
\smallskip

and require that,
\smallskip

$(*_2)$  $s_j\leq j$ for each $j\in\N$.
\smallskip

Then there exists a sequence $(\mathcal{S}_j)$,
$\mathcal{S}_j=(n_j, P_j,\epsilon_j,\widetilde T_j,f_j^{j+1},
\delta_j,T_j,\varphi_j,g_j^{j+1})$, such that $(n_j)$ is an
increasing sequence in $\N$, $(P_j)$ is a sequence
of compact polyhedra, for each $j$, $P_j\subset I^{n_j}$, $p_{n_j}^{n_{j+1}}
(P_{j+1})\subset P_j$, $f_j^{j+1}:P_{j+1}\to P_j$ is a map,
$(\epsilon_j)$ and $(\delta_j)$ are sequences of positive real
numbers, and $\widetilde T_j$ and $T_j$ are triangulations of
$P_j$, $T_j$ being a subdivision of $\widetilde T_j$. The
sequence will be constructed so that $f_j^{j+1}:|\widetilde
T_{j+1}|\to|T_j|$ is a simplicial approximation of
$p_{n_j}^{n_{j+1}}|P_{j+1}:P_{j+1}\to P_j$,
$\varphi_j:|T_j|\to |\widetilde T_j|$ is a
simplicial approximation of the identity map of $P_j=|T_j|$, and we shall define
$g_j^{j+1}=f_j^{j+1}\circ\varphi_{j+1}:|T_{j+1}|\to|T_j|$. For
each $j\in\N$, we shall index the
countable set $\mathfrak{F}_j=\bigcup\{\mathcal{H}(D,|N|)
\,|\,(D,N)\in\mathcal{B}(P_j)\times\mathcal{F}(K)\}$ as
$\{f_{j,k}\,|\,k\in\N\}$ where $f_{j,k}:
D_{j,k}\to|N_{j,k}|$.  All eight of the following conditions will be
satisfied simultaneously by these choices:
\begin{enumerate}
\item if $j>1$, $u$, $v\in I^\infty$ and $\rho(u,v)<\epsilon_j$, then
$\rho(p_{n_s,\infty}(u),p_{n_s,\infty}(v))<\delta_s$ for all $1\leq s<j$,
\item $5\cdot2^{-n_j}<\epsilon_j$,\item $\delta_j<2^{1-n_j}$,
\item $\mesh T_j<\frac{\delta_j}{2}$,\item the map $f_{s_j,t_j}\circ g_{s_j}^j|(g_{s_j}^j)
\inv(D_{s_j,t_j}):(g_{s_j}^j)\inv(D_{s_j,t_j})\to|N_{s_j,t_j}|$
extends to a map $\hat f_{s_j,t_j}:|E_{s_j,t_j}|\to
|N_{s_j,t_j}|$, where $E_{s_j,t_j}$ is the simplicial
neighborhood of $(g_{s_j}^j)\inv(D_{s_j,t_j})$ in $T_j$,
\item  $|E_{s_j,t_j}^*|$ is a regular
neighborhood of $|E_{s_j,t_j}|$ in $T_j$ where $E_{s_j,t_j}^*$
is the simplicial neighborhood of $|E_{s_j,t_j}|$ in $T_j$,
\item for each $x\in X$, there exists $v_{x,j}\in\widetilde
T_j^{(0)}$ such that $\overline
N(p_{n_j,\infty}(x),2\delta_j)\cap
P_j\subset\overline\st(v_{x,j},\widetilde T_j)
\subset\overline N(p_{n_j,\infty}(x),\epsilon_j)\cap P_j$, and
\item whenever $j>1$ and $1\leq s<j$, the simplicial map
$g_s^j:|T_j|\to|T_s|$ is a simplicial approximation of
the restricted projection $p_{n_s}^{n_j}|{P_j}:P_j\to P_s$.
\end{enumerate}
\end{lemma}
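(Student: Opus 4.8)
The plan is to construct the sequence $(\mathcal{S}_j)$ by induction on $j$, using the bijection $\eta$ purely as a bookkeeping device: since $(*_2)$ forces $s_j\le j$ and $\eta$ is onto $\N\times\N$, every extension problem $f_{s,t}$ living on a given level $s$ is assigned to exactly one later stage $j=\eta\inv(s,t)$, at which we install (5) for that problem. The backward bonding data $f_j^{j+1}$, $\varphi_{j+1}$, $g_j^{j+1}$ naturally belong to the step that creates level $j+1$, so I would phrase the induction as: having produced levels $1,\dots,j$ with all bonds among them, produce level $j+1$ and the maps down to it, whence all composites $g_s^{j+1}=g_s^j\circ g_j^{j+1}$. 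The base case is the base case of Lemma \ref{intersectinH} ($n_1=1$, $P_1=I^1$), with (1) and (8) vacuous and (5),(6) handled as in a generic step. I would note at the outset that (5) is pure ANR extension and does not invoke $X\tau K$; the extension-dimension hypothesis only serves to fix the complex $K$, hence $\mathcal{F}(K)$ and the enumeration of the $\mathfrak{F}_j$, for downstream use in Theorem \ref{bigmapistrivsh}.

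For the inductive step I would fix the order of the nine choices so that the eight conditions are forced in turn. Since $\delta_1,\dots,\delta_{j-1}$ are already fixed and the projections $p_{n_s,\infty}$ ($s<j$) are uniformly continuous on $I^\infty$, there is a threshold $\epsilon^\circ>0$ realizing (1); I then pick $n_j>n_{j-1}$ large enough that $5\cdot 2^{-n_j}<\epsilon^\circ$ and that Lemma \ref{intersectinH} and Corollary \ref{picture1} yield a compact $P_j\subset I^{n_j}$ with $p_{n_{j-1}}^{n_j}(P_j)\subset\inter_{I^{n_{j-1}}}P_{j-1}$ and $X$ close inside $P_j\times I^\infty_{n_j}$, and set $\epsilon_j\in(5\cdot 2^{-n_j},\epsilon^\circ]$, securing (1) and (2). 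Next I choose a triangulation $\widetilde T_j$ of $P_j$ of $\mesh\widetilde T_j<\epsilon_j/2$, fine enough relative to $T_{j-1}$ and $p_{n_{j-1}}^{n_j}|P_j$ that the simplicial approximation $f_{j-1}^j:|\widetilde T_j|\to|T_{j-1}|$ exists; with $\lambda$ a Lebesgue number of the open-star cover of $\widetilde T_j$, I set $\delta_j<\min\{\lambda/4,\,2^{1-n_j}\}$. The mesh bound gives $\overline\st(v,\widetilde T_j)\subset\overline N(\cdot,\epsilon_j)\cap P_j$ and the Lebesgue estimate gives $\overline N(\cdot,2\delta_j)\cap P_j\subset\overline\st(v_{x,j},\widetilde T_j)$, which is (7), while $\delta_j<2^{1-n_j}$ is (3). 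I then take $T_j$ a subdivision of $\widetilde T_j$ with $\mesh T_j<\delta_j/2$, fine enough that $\varphi_j:|T_j|\to|\widetilde T_j|$ exists, giving (4), and define $g_{j-1}^j=f_{j-1}^j\circ\varphi_j$ and $g_s^j=g_s^{j-1}\circ g_{j-1}^j$.

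Conditions (5) and (6) for the assigned pair $(s_j,t_j)$ are installed by a further subdivision of $T_j$. Because $g_{s_j}^j$ is, by the already-verified (8), a simplicial approximation of $p_{n_{s_j}}^{n_j}|P_j$, it is in particular a $T_{s_j}$-modification of that projection in the sense of Definition \ref{modif}; the set $(g_{s_j}^j)\inv(D_{s_j,t_j})$ is closed in $P_j$, and since $|N_{s_j,t_j}|$ is a finite polyhedron, hence an ANR and an absolute neighborhood extensor for metrizable spaces, the map $f_{s_j,t_j}\circ g_{s_j}^j$ on that closed set extends over some open neighborhood of it in $P_j$. I then pass to a second-derived (barycentric) subdivision of $T_j$ fine enough that the simplicial neighborhood $E_{s_j,t_j}$ of $(g_{s_j}^j)\inv(D_{s_j,t_j})$ lies inside that extension neighborhood, yielding $\hat f_{s_j,t_j}:|E_{s_j,t_j}|\to|N_{s_j,t_j}|$ and hence (5); the derived subdivision simultaneously makes the simplicial neighborhood $E^*_{s_j,t_j}$ of $|E_{s_j,t_j}|$ a regular neighborhood, which is (6), by classical derived-neighborhood theory. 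Since subdividing only decreases mesh, (3),(4),(7) survive, and re-choosing $\varphi_j$, $f_{j-1}^j$ on the finer $T_j$ close to the old ones keeps (8).

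The main obstacle I expect is condition (8): that the composite $g_s^j=g_s^{j-1}\circ g_{j-1}^j$ is again a simplicial approximation, now of $p_{n_s}^{n_j}|P_j=p_{n_s}^{n_{j-1}}\circ p_{n_{j-1}}^{n_j}$. A composition of simplicial approximations need not approximate the composite, so one must verify the star condition $p_{n_s}^{n_j}(\st(w,T_j))\subset\st(g_s^j(w),T_s)$ at every vertex $w$. This is exactly what the calibration (1)--(4) is designed to force: (4) keeps $\mesh T_j<\delta_j/2$, (1) converts $\epsilon_j$-closeness upstairs into $\delta_s$-closeness after projecting to level $s$, and (2),(3) tie the $\epsilon$'s and $\delta$'s to the dyadic scale $2^{-n_j}$ governing $\rho$; together they place $p_{n_s}^{n_j}(w)$, the carrier of $g_{j-1}^j(w)$ projected down, and $g_s^j(w)$ in a common star of $T_s$, so the star condition propagates through the composition. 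Making these inequalities close up simultaneously with the two later refinements forced by (5) and (6) — without disturbing the metric conditions already secured — is the delicate part; the circular dependence of $g_{s_j}^j$ on $T_j$ (needed to locate the extension neighborhood before the final subdivision) I would resolve by choosing the approximations on a provisional $T_j$, fixing the open extension neighborhood, and then subdividing and re-approximating within a tolerance small enough to preserve every inclusion.
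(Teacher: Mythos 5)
Your construction follows the paper's order of choices essentially step for step: $\epsilon_{i+1}$ by uniform continuity of the coordinate projections (condition (1)), then $n_{i+1}$ for (2), then $\widetilde T_{i+1}$ of mesh $<\epsilon_{i+1}/2$ together with the simplicial approximation $f_i^{i+1}$, the Lebesgue-number choice of $\delta_{i+1}$ giving (3) and (7), then the subdivision $T_{i+1}$ of mesh $<\delta_{i+1}/2$ for (4), with (5) obtained from the ANE property of the finite polyhedron $|N_{s_j,t_j}|$ and a fine enough simplicial neighborhood, and (6) from regular-neighborhood theory. Your observation that (5) is pure ANE extension, the extension-dimension hypothesis serving only to fix $K$ for later use, is correct, and your closing remark about the circular dependence of $g_{s_j}^{j}$ on $T_j$ is a fair one — the paper itself passes over this with ``we may also assume about the triangulation $T_{i+1}$\dots''.

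However, your argument for (8) would fail as proposed. You want to verify the star condition $p_{n_s}^{n_j}(\st(w,T_j))\subset\st(g_s^j(w),T_s)$ by means of the metric calibration (1)--(4), converting $\epsilon_j$-closeness upstairs into $\delta_s$-closeness at level $s$ so as to place the relevant points ``in a common star of $T_s$''. This cannot work at that scale: by (4), $\mesh T_s<\frac{\delta_s}{2}$, so every closed star of $T_s$ has diameter less than $\delta_s$, and there is no Lebesgue-number lower bound for the star cover of $T_s$ (the number $\lambda_s$ controls stars of the coarse triangulation $\widetilde T_s$, where $2\delta_s<\lambda_s$, not stars of its fine subdivision $T_s$). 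Knowing $\rho\bigl(g_s^j(x),p_{n_s}^{n_j}(x)\bigr)<\delta_s$ can never confine $g_s^j(x)$ to the particular carrier simplex of $p_{n_s}^{n_j}(x)$ in $T_s$; the image point could lie in an adjacent simplex at distance far below $\delta_s$. A carrier condition is combinatorial and the paper proves (8) combinatorially, with no use of (1)--(4) at all: for $x\in P_{i+1}$ with $\widetilde T_{i+1}$-carrier $\tau^*$ and with $\sigma\in T_i$ the carrier of $p_{n_i}^{n_{i+1}}(x)$, one has $\varphi_{i+1}(x)\in\tau^*$ (because $T_{i+1}$ subdivides $\widetilde T_{i+1}$) and $f_i^{i+1}(\tau^*)\subset\sigma$ (the defining property of the simplicial approximation $f_i^{i+1}$), so $g_i^{i+1}(x)\in\sigma$; then for $s<i$, if $\kappa\in T_s$ carries $p_{n_s}^{n_{i+1}}(x)$, the inductive hypothesis gives $g_s^i\circ p_{n_i}^{n_{i+1}}(x)\in\kappa$, and since $g_s^i$ is \emph{simplicial} and $p_{n_i}^{n_{i+1}}(x)\in\inter(\sigma)$, the whole simplex satisfies $g_s^i(\sigma)\subset\kappa$, whence $g_s^{i+1}(x)=g_s^i\circ g_i^{i+1}(x)\in\kappa$. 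In short, what makes the approximation property survive composition is simpliciality of $g_s^i$ applied to the carrier simplex, not the $\epsilon$--$\delta$ ladder; the actual purpose of (1)--(4) is the Cauchy-sequence and fiber analysis of Lemma \ref{altAJR}, so your proposal misattributes the role of those inequalities and leaves (8) unproved.
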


\begin{proof}Let $(n_j)$ be an increasing sequence in $\N$ as in
Lemma \ref{intersectinH}.  There is also a sequence $(P_j)$ of
polyhedra given there, and for each of these we select
the set $\mathfrak{F}_j$ indexed
as in the hypothesis. In the ensuing proof we might
have to find an increasing sequence $(j_i)$ in $\N$ and
replace $(n_j)$ with the  nondecreasing subsequence $(m_i)$,
$m_i=n_{j_i}$.  To conserve notation, we shall not introduce
the symbols $m_i$, but rather will simply rename $n_j$ as needed
and rely on Lemma \ref{intersectinH} and Corollary \ref{picture1}
to help fill in any gaps that this might seem to present.  A similar
case will also apply to the polyhedra $P_j$.  Note that $n_1=1$.

Choose $\epsilon_1=5$ and a triangulation $\widetilde T_1$ of
$P_1=I^1$ with $\mesh\widetilde
T_1<\frac{5}{2}=\frac{\epsilon_1}{2}$. Let $\lambda_1$ be a
Lebesgue number of the open cover
$\mathcal{U}_1=\{\st(v,\widetilde T_1)\,|\, v\in\widetilde
T_1^{(0)}\}$ of $P_1=I^1$, and pick $0<
\delta_1=\min\{\frac{\lambda_1}{3},2\inv\}$.

Using $(*_1)$, $(*_2)$, observe that $\eta(1)=(s_1,t_1)=(1,t_1)$
for some $t_1\in\N$. The map
$f_{1,t_1}=f_{s_1,t_1}:D_{s_1,t_1}\to|N_{s_1,t_1}|$ lies in
$\mathfrak{F}_{s_1}=\mathfrak{F}_1$, so its domain $D_{s_1,t_1}$ is a closed
subset of $P_1$ and its range is a compact polyhedron.
Hence $f_{s_1,t_1}$ extends to a map
$f_{s_1,t_1}^*:D_{s_1,t_1}^*\to|N_{s_1,t_1}|$, where
$D_{s_1,t_1}^*$ is a neighborhood  of $D_{s_1,t_1}$ in $P_1$.
Select a triangulation $T_1$ of $P_1$ that refines $\widetilde
T_1$ and so that $\mesh T_1<\frac{\delta_1}{2}$. We may also assume
about the triangulation $T_1$ that $|E_{s_1,t_1}|\subset D_{s_1,t_1}^*$,
where $E_{s_1,t_1}$ is the simplicial neighborhood of $D_{s_1,t_1}$
with respect to $T_1$.  Employing $f_{s_1,t_1}^*$ and the preceding,
we see that $f_{s_1,t_1}$ extends to a map $\hat f_{s_1,t_1}:|E_{s_1,t_1}|
\to|N_{s_j,t_j}|$. We may go even further and require that
$|E_{s_1,t_1}^*|$ is a regular neighborhood of $|E_{s_1,t_1}|$
in $|T_1|$, where $E_{s_1,t_1}^*$ is the simplicial
neighborhood of $|E_{s_1,t_1}|$ in $T_1$.  Let
$\varphi_1:|T_1|\to|\widetilde T_1|$ be a simplicial
approximation of the identity map of $P_1$.

Since $2\delta_1<\lambda_1$, then for each $x\in X$ we may
choose $v_{x,1}\in\widetilde T_1^{(0)}$ such that $\overline
N(p_{n_1,\infty}(x),2\delta_1)\subset\st(v_{x,1},\widetilde
T_1)$.  For a given $y\in \overline\st(v_{x,1},\widetilde
T_1)$, $\rho(y,p_{n_1,\infty}(x))\leq2\mesh\widetilde
T_1<\epsilon_1$, so $\overline\st(v_{x,1},\widetilde
T_1)\subset\overline N(p_{n_1,\infty}(x),\epsilon_1)=\overline
N(p_{n_1,\infty}(x),\epsilon_1)\cap P_1$. It follows that,
$\overline N(p_{n_1,\infty}(x),2\delta_1)=\overline
N(p_{n_1,\infty}(x),2\delta_1)\cap P_1\subset
\overline\st(v_{x,1},\widetilde T_1)\subset\overline
N(p_{n_1,\infty}(x),\epsilon_1)\cap P_1$.

Taking the preceding as the first step in a recursion,
then all of (1)--(8) hold true for $j=1$.
Now assume that $i\in\N$, and that we have completed the
construction of $\mathcal{S}_j$ for each $1\leq j\leq i$
in accordance with (1)--(8).
Aside from the inductive assumptions, and as mentioned above,
we insist on one proviso.  We shall agree that
the numbers $\{n_1,\dots,n_i\}$ are in reality $\{n_{j_1},\dots,n_{j_i}\}$
where $j_1<\dots<j_i$, so we get a finite
nondecreasing subsequence of the given infinite sequence $(n_j)$
(retaining the symbol $n_j$ in order to conserve notation).
Now we proceed for the $(i+1)$-step.

Applying the uniform continuity of
the coordinate projections of $I^\infty$, select $0<\epsilon_{i+1}$ so
that if $u$, $v\in I^\infty$ and $\rho(u,v)<\epsilon_{i+1}$, then for
each $1\leq s\leq i$, $\rho(p_{n_s,\infty}(u),p_{n_s,\infty}(v))<\delta_s$.
This achieves (1) for $j=i+1$.

Choose $n_{i+1}>n_i$ so that (2) is satisfied for $j=i+1$.
As a consequence of Lemma \ref{intersectinH}(2), $p_{n_s}^{n_j}(P_j)\subset P_s$
for all $1\leq s<j\leq i+1$. So we can write the restrictions of
the projections as $p_{n_s}^{n_j}|{P_j}:P_j\to P_s$, for such $s$.
Keep in mind that $P_{i+1}\subset I^{n_{i+1}}$.
Select a triangulation $\widetilde T_{i+1}$ of $P_{i+1}$ with $\mesh
\widetilde T_{i+1}<\frac{\epsilon_{i+1}}{2}$ and so that
at the same time we may find a simplicial approximation $f_i^{i+1}:
|\widetilde T_{i+1}|\to|T_i|$
to the map $p_{n_i}^{n_{i+1}}|{P_{i+1}}:P_{i+1}\to P_i$. Let
$\lambda_{i+1}$ be a Lebesgue number of the open cover
$\mathcal{U}_{i+1}=\{\st(v,\widetilde T_{i+1})\,|\,v
\in\widetilde T_{i+1}^{(0)}\}$ of $P_{i+1}$, and pick
$0<\delta_{i+1}=\min\{\frac{\lambda_{i+1}}{3},
2^{1-n_{i+1}}\}$.  This gives us (3) for $j=i+1$.

Using $(*_1)$, $(*_2)$, observe that $\eta(i+1)=(s_{i+1},t_{i+1})$
for some $t_{i+1}\in\N$ and where $s_{i+1}\leq i+1$. The map
$f_{s_{i+1},t_{i+1}}:D_{s_{i+1},t_{i+1}}\to|N_{s_{i+1},t_{i+1}}|$ lies in
$\mathfrak{F}_{s_{i+1}}$, so its domain $D_{s_{i+1},t_{i+1}}$ is a closed
subset of $P_{s_{i+1}}$ and its range is a compact polyhedron.  So the map
$f_{s_{i+1},t_{i+1}}\circ g_{s_{i+1}}^{i+1}|
(g_{s_{i+1}}^{i+1})\inv(D_{s_{i+1},t_{i+1}}):(g_{s_{i+1}}^{i+1})
\inv(D_{s_{i+1},t_{i+1}})\to|N_{s_{i+1},t_{i+1}}|$ extends to a map
$f_{s_{i+1},t_{i+1}}^*:D_{s_{i+1},t_{i+1}}^*\to
|N_{s_{i+1},t_{+1}}|$, where $D_{s_{i+1},t_{i+1}}^*$ is a
neighborhood of $(g_{s_{i+1}}^{i+1})\inv(D_{s_{i+1},t_{i+1}})$
in $P_{i+1}$.  Select a triangulation $T_{i+1}$ of $P_{i+1}$
that refines $\widetilde T_{i+1}$ and so that $\mesh
T_{i+1}<\frac{\delta_{i+1}}{2}$; this accomplishes (4) for
$j=i+1$.   We may also assume
about the triangulation $T_{i+1}$ that $|E_{s_{i+1},t_{i+1}}|
\subset D_{s_{i+1},t_{i+1}}^*$,
where $E_{s_{i+1},t_{i+1}}$ is the simplicial neighborhood of
$(g_{s_{i+1}}^{i+1})\inv(D_{s_{i+1},t_{i+1}})$
with respect to $T_{i+1}$. Employing $f_{s_{i+1},t_{i+1}}^*$ and the preceding,
we see that $f_{s_{i+1},t_{i+1}}$ extends to a map $\hat f_{s_{i+1},t_{i+1}}:
|E_{s_{i+1},t_{i+1}}|\to|N_{s_{i+1},t_{i+1}}|$, so we get (5).
We may go even further and require that
$|E_{s_{i+1},t_{i+1}}^*|$ is a regular neighborhood of $|E_{s_{i+1},t_{i+1}}|$
in $|T_1|$, where $E_{s_{i+1},t_{i+1}}^*$ is the simplicial
neighborhood of $|E_{s_{i+1},t_{i+1}}|$ in $T_{i+1}$.  This yields (6).
Let $\varphi_{i+1}:|T_{i+1}|\to|\widetilde T_{i+1}|$ be
a simplicial approximation of the identity map of $P_{i+1}$.

Since $2\delta_{i+1}<\lambda_{i+1}$, then for each $x\in X$ we may choose
$v_{x,i+1}\in\widetilde T_{i+1}^{(0)}$ such that $\overline
N(p_{n_{i+1},\infty}(x),2\delta_{i+1})\cap P_{i+1}
\subset\st(v_{x,i+1},\widetilde T_{i+1})$.  For a
given $y\in \overline\st(v_{x,i+1},\widetilde T_{i+1})$,
$\rho(y,p_{n_{i+1},\infty}(x))\leq2\mesh\widetilde T_{i+1}<\epsilon_{i+1}$, so
$\overline\st(v_{x,i+1},\widetilde T_{i+1})\subset\overline
N(p_{n_{i+1},\infty}(x),\epsilon_{i+1})\cap P_{i+1}$.
It follows that, $\overline
N(p_{n_{i+1},\infty}(x),2\delta_{i+1})\cap P_{i+1}\subset
\overline\st(v_{x,i+1},\widetilde T_{i+1})\subset\overline
N(p_{n_{i+1},\infty}(x),\epsilon_{i+1})\cap P_{i+1}$.
We have achieved (7) for $j=i+1$.  Put $g_i^{i+1}=f_i^{i+1}\circ
\varphi_{i+1}:|T_{i+1}|\to|T_i|$.  We see that $g_i^{i+1}$ is
simplicial. It follows that for each $1\leq s<i+1$, $g_s^{i+1}:
|T_{i+1}|\to|T_s|$ is a simplicial map.

The next thing to establish is (8) for $j=i+1$.
We have to show that if $1\leq s<i+1$, then
$g_s^{i+1}:|T_{i+1}|\to|T_s|$ is a simplicial approximation
of $p_{n_s}^{n_{i+1}}|P_{i+1}:P_{i+1}\to P_s$.  The inductive assumption
is that (8) is true whenever $1\leq i_0<i$
and $j=i_0+1$.  This means that if $1\leq s<i_0+1$, then
$g_s^{i_0+1}:|T_{i_0+1}|\to|T_s|$ is a simplicial approximation
of $p_{n_s}^{n_{i_0+1}}|P_{i_0+1}:P_{i_0+1}\to P_s$.
Let us determine a fact that will be useful twice.
\vspace{.05in}

$(\dag)$  Let $x\in P_{i+1}$.  There are a unique $\sigma\in
T_i$ with $p_{n_i}^{n_{i+1}}(x)\in\inter(\sigma)$, $\tau\in
T_{i+1}$ with $x\in\inter(\tau)$, and $\tau^*\in\widetilde
T_{i+1}$ with $x\in\inter(\tau^*)$.  Thus, $\tau\subset\tau^*$,
and $\varphi_{i+1}(x)\in\tau^*$. Since $f_i^{i+1}:|\widetilde
T_{i+1}|\to|T_i|$ is a simplicial approximation of
$p_{n_i}^{n_{i+1}}|P_{i+1}:P_{i+1}\to P_i$, $\sigma\in T_i$,
$x\in\inter(\tau^*)$, and
$p_{n_i}^{n_{i+1}}(x)\in\inter(\sigma)$, then
$f_i^{i+1}(\tau^*)\subset\sigma$.

Consider first the case that $s=i$.  We have to show that the
simplicial map $g_i^{i+1}:|T_{i+1}|\to|T_i|$ is a simplicial
approximation of $p_{n_i}^{n_{i+1}}|P_{i+1}:P_{i+1}\to P_i$.
The result of $(\dag)$ shows that
$g_i^{i+1}(x)=f_i^{i+1}\circ\varphi_{i+1}(x)\in
f_i^{i+1}(\tau^*)\subset\sigma$, as required for (8).

The other case is that $s<i$.  The inductive assumption gives us:
\vspace{.03in}

$(*)$  $g_s^i:|T_i|\to|T_s|$ is a simplicial approximation of
$p_{n_s}^{n_i}|P_i:P_i\to P_s$.
\vspace{.03in}

Choose $\kappa\in T_s$ such that $p_{n_s}^{n_{i+1}}(x)
=p_{n_s}^{n_i}\circ p_{n_i}^{n_{i+1}}(x)\in\inter\kappa$. By
$(*)$, $g_s^i\circ p_{n_i}^{n_{i+1}}(x)\in\kappa$. Take
$\sigma$ from $(\dag)$. Then $p_{n_i}^{n_{i+1}}(x)\in
\inter(\sigma)$, $\sigma\in T_i$, and $g_s^i\circ
p_{n_i}^{n_{i+1}}(x)\in\kappa$, so since $g_s^i$ is simplicial,
one has that $g_s^i(\sigma)\subset\kappa$. In the case $s=i$,
we showed that $g_i^{i+1}(x)\in\sigma$. It follows that
$g_s^{i+1}(x)=g_s^i\circ g_i^{i+1}(x)\in
g_s^i(\sigma)\subset\kappa$, which is what we need to complete
the proof of (8).
\end{proof}

\begin{definition}\label{inducedinvseq}For each nonempty
metrizable compactum $X$, select an embedding $X\hookrightarrow I^\infty$,
let $K$ be a simplicial complex such that
$[|K|]_{(\mathcal{C},\mathcal{T})}=\extdim_{(\mathcal{C},\mathcal{T})}X$,
and choose a sequence $(\mathcal{S}_j)$ as
in $\mathrm{Lemma\,\,\ref{enhancedbigpicture}}$. Then
$\mathbf{Z}=(|T_j|,g_j^{j+1})$ will be called an {\bf induced inverse
sequence} for $X$.  We shall usually use the term induced inverse
sequence without reference to the sequence $(\mathcal{S}_j)$, but
the latter will always be available if needed in an argument.
\end{definition}

\section[Extension-Theoretic Property of an Induced Inverse Sequence]
{Extension-Theoretic Property of an Induced Inverse Sequence}
\label{propssimpres}

Theorem \ref{ZistauK} provides the second step in showing how
to replace a given nonempty metrizable compactum with a better
one for the purposes of extension theory.

\begin{theorem}\label{ZistauK}Let $X$ be a nonempty metrizable
compactum and suppose that $\mathbf{Z}=
(|T_j|,g_j^{j+1})$ is an induced inverse sequence for $X$ as in
$\mathrm{Definition\,\ref{inducedinvseq}}$. Put
$Z=\lim\mathbf{Z}$.  If $K_0$ is a
$\CW$-complex and $X\tau K_0$, then $Z\tau K_0$.
\end{theorem}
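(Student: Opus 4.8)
The plan is to reduce the theorem to the single statement $Z\tau|K|$, where $K$ is the simplicial complex that was used to build $\mathbf Z$ (so that $[|K|]_{(\mathcal C,\mathcal T)}=\extdim_{(\mathcal C,\mathcal T)}X$), and then to verify the hypothesis of Proposition \ref{baseextension} for $\mathbf Z$ with target $|K|$. First I would note that $[|K|]_{(\mathcal C,\mathcal T)}=\extdim_{(\mathcal C,\mathcal T)}X$ forces $X\tau|K|$: the extension dimension is by definition the initial element of $(\mathcal E(X),\leq_{(\mathcal C,\mathcal T)})$, so it lies in $\mathcal E(X)$, which means $X\tau[|K|]_{(\mathcal C,\mathcal T)}$ and hence $X\tau|K|$. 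Granting $Z\tau|K|$, Lemma \ref{therelation} applied with $Y=Z$ and $L=K_0$ (using the hypothesis $X\tau K_0$) yields $Z\tau K_0$. This reduction is essential rather than cosmetic, since the bookkeeping sets $\mathfrak F_j$ built into $\mathbf Z$ are tied to $K$, not to an arbitrary $K_0$.

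To prove $Z\tau|K|$ I would invoke Proposition \ref{baseextension}: fix $i\in\N$, a set $D\in\mathcal B(|T_i|)=\mathcal B(P_i)$, and a map $f:D\to|K|$, and produce $j\geq i$ together with $g:|T_j|\to|K|$ agreeing with $f\circ g_i^j$ on $(g_i^j)\inv(D)$. Since $D$ is compact, $f(D)$ lies in a finite subcomplex $M$ of $K$; choosing $N\in\mathcal F(K)$ simplicially isomorphic to $M$ and letting $\alpha:|N|\to|M|\subset|K|$ be the induced homeomorphism, the class of $\alpha\inv\circ f$ in $[D,|N|]$ is represented by some $f_{i,k}\in\mathcal H(D,|N|)\subset\mathfrak F_i$, so that $f\simeq\alpha\circ f_{i,k}$. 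Using the bijection $\eta$ of $(*_1)$, let $j=\eta\inv(i,k)$, so that $(s_j,t_j)=(i,k)$ and, by $(*_2)$, $j\geq s_j=i$.

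At stage $j$, condition (5) of Lemma \ref{enhancedbigpicture} extends $f_{i,k}\circ g_i^j$ over $(g_i^j)\inv(D)$ to a map $\hat f:|E|\to|N|$, where $E$ is the simplicial neighborhood of $(g_i^j)\inv(D)$ in $T_j$ (so $(g_i^j)\inv(D)\subset|E|$), and condition (6) makes $|N_{|E|,T_j}|$ a regular neighborhood of $|E|$. I would then apply Lemma \ref{endset}(3) with $L=E$ and the map $\alpha\circ\hat f:|E|\to|K|$, which is legitimate because $X\tau|K|$: each $g_j^l$ is, by condition (8), a simplicial approximation of $p_{n_j}^{n_l}|P_l$ and hence a $T_j$-modification of it, so for all sufficiently large $l$ we obtain $g^*:|T_l|\to|K|$ extending $\alpha\circ\hat f\circ g_j^l$ on $(g_j^l)\inv(|E|)$. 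Since $(g_i^l)\inv(D)=(g_j^l)\inv\big((g_i^j)\inv(D)\big)\subset(g_j^l)\inv(|E|)$, a direct computation using that $\hat f$ extends $f_{i,k}\circ g_i^j$ and that $g_i^j\circ g_j^l=g_i^l$ shows $g^*=\alpha\circ f_{i,k}\circ g_i^l$ on $(g_i^l)\inv(D)$.

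It remains to replace homotopy by equality. Because $f\simeq\alpha\circ f_{i,k}$, precomposition with $g_i^l$ shows that $f\circ g_i^l$ and $\alpha\circ f_{i,k}\circ g_i^l$ are homotopic on the closed set $A=(g_i^l)\inv(D)$; starting the corresponding homotopy at $g^*$ and extending it by the homotopy extension theorem (used just as in the proof of Lemma \ref{endset}) produces $g:|T_l|\to|K|$ with $g=f\circ g_i^l$ on $A$. Taking this $l$ as the required index completes the verification of Proposition \ref{baseextension}, hence $Z\tau|K|$, and the first paragraph then gives $Z\tau K_0$. I expect the main obstacle to be the triple-index bookkeeping $i\leq j\leq l$: one must keep straight that condition (5) extends the chosen representative $f_{i,k}$ (not $f$ itself) only over the simplicial neighborhood $E$ at stage $j$, that Lemma \ref{endset}(3) then pushes this forward to a global map on $|T_l|$, and that the nested preimage inclusions force the composite to equal $\alpha\circ f_{i,k}\circ g_i^l$ on $(g_i^l)\inv(D)$, after which the homotopy extension step upgrades this to exact agreement with $f\circ g_i^l$.
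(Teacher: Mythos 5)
Your proposal is correct and takes essentially the same route as the paper's own proof: reduce to $Z\tau|K|$ and conclude via Lemma \ref{therelation}, then verify the hypothesis of Proposition \ref{baseextension} by representing $f$ (up to the homeomorphism with an element of $\mathcal{F}(K)$) by a member of $\mathfrak{F}_i$, locating the stage $\eta\inv(i,k)$ where conditions (5) and (6) of Lemma \ref{enhancedbigpicture} apply, invoking Lemma \ref{endset}(3) with condition (8) supplying the $T$-modification, and finishing with the homotopy extension theorem --- your indices $i\leq j\leq l$ correspond exactly to the paper's $j\leq j^*\leq k$. The only differences are cosmetic: you make explicit two steps the paper leaves implicit, namely the appeal to Lemma \ref{therelation} for the reduction (the paper cites Proposition \ref{inclusive} there, though Lemma \ref{therelation} is what actually carries the argument) and the final homotopy-extension upgrade from homotopic agreement to exact agreement on $(g_i^l)\inv(D)$.
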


\begin{proof}Let us incorporate all the notation from Lemma
\ref{enhancedbigpicture}.  It follows from Theorem \ref{inclusive}, that we only have to show that $Z\tau|K|$. Let $j\in\N$, $D\in\mathcal{B}(P_j)$
($P_j=|T_j|$), and $f:D\to|K|$ a map.
We are going to find $k\geq j$ and a map $g:P_k\to|K|$
such that for all $x\in(g_j^k)\inv(D)$, $g(x)=f\circ g_j^k(x)$.
According to Proposition \ref{baseextension}, that will
complete our proof.

There is a finite subcomplex $F$ of $K$ with $f(D)\subset|F|$.
Choose an element $N$ of $\mathcal{F}(K)$ that is simplicially
isomorphic to $F$ and let $\phi:|F|\to|N|$ and $\psi:|N|\to|F|$
be inverse homeomorphisms.  Put $f_0=f:D\to|F|$, and then $f^*=
\phi\circ f_0:D\to|N|$.  There exists $l\in\N$ so that
$N_{j,l}=N$, $D_{j,l}=D$, and $f_{j,l}\simeq f^*$, where
$f_{j,l}:D_{j,l}\to|N_{j,l}|$. (See Definition
\ref{homotopyclasses} and $\mathfrak{F}_j$ of Lemma
\ref{enhancedbigpicture}).

Since $\eta$ of Lemma \ref{enhancedbigpicture} is surjective,
choose $j^*$ so that $\eta(j^*)=(j,l)$. This means that
$(j,l)=(s_{j^*},t_{j^*})$ and, of course, $s_{j^*}=j\leq j^*$
(see $(*_1)$ and $(*_2)$ of Lemma \ref{enhancedbigpicture}). By
this, one can see that $D=D_{s_{j^*},t_{j^*}}$,
$N_{j,l}=N_{s_{j^*},t_{j^*}}$,
$f_{j,l}=f_{s_{j^*},t_{j^*}}\simeq f^*:D_{s_{j^*},t_{j^*}}\to|N_{s_{j^*},t_{j^*}}|$,
and that $\psi:|N_{s_{j^*},t_{j^*}}|\to|F|$. With $j^*$ in
place of $j$, employ (5) and (6) of Lemma
\ref{enhancedbigpicture}. The map $f_{s_{j^*},t_{j^*}}\circ
g_{s_{j^*}}^{j^*}|(g_{s_{j^*}}^{j^*})
\inv(D_{s_{j^*},t_{j^*}}):
(g_{s_{j^*}}^{j^*})\inv(D_{s_{j^*},t_{j^*}})
\to|N_{s_{j^*},t_{j^*}}|$ extends to a map $\hat
f_{s_{j^*},t_{j^*}}:|E_{s_{j^*},t_{j^*}}|\to
|N_{s_{j^*},t_{j^*}}|$ where $E_{s_{j^*},t_{j^*}}$ is the
simplicial neighborhood of
$(g_{s_{j^*}}^{j^*})\inv(D_{s_{j^*},t_{j^*}})$ in $T_{j^*}$,
and $|E_{s_{j^*},t_{j^*}}^*|$ is a regular neighborhood of
$|E_{s_{j^*},t_{j^*}}|$ in $|T_{j^*}|$, where $
E_{s_{j^*},t_{j^*}}^*$ is the simplicial neighborhood of
$|E_{s_{j^*},t_{j^*}}|$ in $T_{j^*}$.  Now put $\hat f=
\psi\circ\hat f_{s_{j^*},t_{j^*}}:|E_{s_{j^*},t_{j^*}}|\to|F|
\subset|K|_{\CW}$.

Next we put Lemma \ref{endset} into play.  Replace $j$ by $j^*$
and $L$ by $E_{s_{j^*},t_{j^*}}$ in (3) of that Lemma, and make
the observation that the simplicial neighborhood $
E_{s_{j^*},t_{j^*}}^*$ of $|E_{s_{j^*},t_{j^*}}|$ in $T_{j^*}$
produces the desired regular neighborhood $|
E_{s_{j^*},t_{j^*}}^*|$ of $|E_{s_{j^*},t_{j^*}}|$ in $T_{j^*}$. Assume that
$k\geq j^*$ is as in $(2)$ of Lemma \ref{endset} with $B_{j^*}
=|E_{s_{j^*},t_{j^*}}|$ playing the role of $B_j$, and use
$g_{j^*}^k:P_k\to|T_{j^*}|$ as the map $g$ which is a
$T_{j^*}$-modification of $p_{n_{j^*}}^{n_k}|P_k:P_k\to
P_{j^*}=|T_{j^*}|$ (in this instance we just take $l=k$ for the
$l$ of Lemma \ref{endset}). The map $f$ of (3) of the cited
lemma will be $\hat f$. In this case we shall have
$E=(g_{j^*}^k)\inv(|E_{s_{j^*},t_{j^*}}|)\subset P_k$.

From all this, we conclude that there is a map
$g^*:P_k\to|K|$ that extends the composition $\hat f\circ
g_{j^*}^k|E:E\to|K|$.  Since $j=s_{j^*}\leq j^*\leq k$, and
$(g_{s_{j^*}}^{j^*})
\inv(D_{s_{j^*},t_{j^*}})\subset|E_{s_{j^*},t_{j^*}}|$, then
$(g_j^k)\inv(D)=(g_{s_j^*}^k)\inv(D_{s_{j^*},t_{j^*}})\subset
E$. Therefore according to the first paragraph of this proof,
it is simply a matter of showing that $\hat
f\circ g_{j^*}^k|(g_{s_{j^*}}^k)\inv(D_{s_{j^*},t_{j^*}})
:(g_{s_{j^*}}^k)\inv(D_{s_{j^*},t_{j^*}})\to|K|$ is
homotopic to the map $f\circ g_{s_{j^*}}^k|
(g_{s_{j^*}}^k)\inv(D_{s_{j^*},t_{j^*}})
:(g_{s_{j^*}}^k)\inv(D_{s_{j^*},t_{j^*}})\to|K|$.  Since
$g_{s_{j^*}}^k=g_{s_{j^*}}^{j^*}\circ g_{j^*}^k$, then this
comes to showing that $\hat
f|(g_{s_{j^*}}^{j^*})\inv(D_{s_{j^*},t_{j^*}}):
(g_{s_{j^*}}^{j^*})\inv(D_{s_{j^*},t_{j^*}})\to|K|$ is
homotopic to $$f\circ
g_{s_{j^*}}^{j^*}|(g_{s_{j^*}}^{j^*})\inv(D_{s_{j^*},t_{j^*}}):
(g_{s_{j^*}}^{j^*})\inv(D_{s_{j^*},t_{j^*}})\to|K|.$$

Recall that $\hat f=\psi\circ\hat f_{s_{j^*},t_{j^*}}$, and
$\hat
f_{s_{j^*},t_{j^*}}|(g_{s_{j^*}}^{j^*})\inv(D_{s_{j^*},t_{j^*}})=
f_{s_{j^*},t_{j*}}\circ g_{s_{j*}}^{j^*}|(g_{s_{j^*}}^{j^*})
\inv(D_{s_{j^*},t_{j^*}}) $.  But
$f_{s_{j^*},t_{j^*}}=f^*=\phi\circ f_0=\phi\circ
f|D_{s_{j^*},t_{j^*}}$.  Since $\phi$ and $\psi$ are inverse
homeomorphisms, then we get the result by substitution.
\end{proof}

\section[Adjustments of an Induced Inverse Sequence]
{Adjustments of an Induced Inverse Sequence}
\label{adjust}

We want to increase the flexibility of our work so that it can
have multiple applications.  The major development of this
section towards that goal is Lemma \ref{altAJR}. For the
remainder of this section, let $X$ be a nonempty metrizable compactum,
$\mathbf{Z}=(|T_j|,g_j^{j+1})$ an induced inverse
sequence for $X$, and $Z=\lim\mathbf{Z}$. Fix $n\geq0$,
including $n=\infty$, and for each $j\in\N$, let
$M_j\subset|T_j|\subset I^{n_{j}}$ be a nonempty closed subset
such that $g_j^{j+1}(M_{j+1})\subset M_j$.  For each $i$, we
shall write
\smallskip

$(*_1)$  $\hat g_i^k=g_i^k\big||T_k^{(n)}|:
|T_k^{(n)}|\to|T_i^{(n)}|$ whenever $i\leq k$,
\smallskip

$(*_2)$
$\widehat\varphi_i=\varphi_i\big||T_i^{(n)}|:|T_i^{(n)}|
\to|\widetilde T_i^{(n)}|$,
\smallskip

$(*_3)$ $\hat f_i^{i+1}=f_i^{i+1}\big||\widetilde
T_{i+1}^{(n)}|: |\widetilde T_{i+1}^{(n)}|\to|T_i^{(n)}|$, and
\smallskip

$(*_4)$  $\alpha_i$ for the inclusion $|\widetilde
T_i^{(n)}|\hookrightarrow|\widetilde T_i^{(n+1)}|$.
\smallskip

Of course when $i<k$, then $g_i^k$ factors as $g_i^s\circ g_s^k$
whenever $i\leq s\leq k$.  Such a property is ``inherited'' by
$\hat g_i^k$.  Let us make some notes about this and others that
can be gleaned from the preceding.  For each $i<k$,
\smallskip

$(*_5)$  $\hat g_i^k=\hat g_i^{k-1}\circ\hat g_{k-1}^k$,
\smallskip

$(*_6)$  $\hat g_{k-1}^k=
\hat f_{k-1}^k\circ\widehat\varphi_k$, and
\smallskip

$(*_7)$  $\hat g_i^k=\hat g_i^{k-1}\circ
\hat f_{k-1}^k\circ\widehat\varphi_k$.
\smallskip

We need to name some sets.

\begin{definition}\label{somesets} For each
$x\in X$ and $j\in\N$, let $v_{x,j}$ be the vertex of
$\widetilde T_j$ in
$\mathrm{Lemma\,\,\ref{enhancedbigpicture}}(5)$, and define:
\begin{enumerate}\item $B_{x,j}=\overline
N(p_{n_j,\infty}(x),2\delta_{j})\cap M_j$,\item
$B_{x,j}^+=\overline \st(v_{x,j},\widetilde T_j)\cap
M_j$,\item $B_{x,j}^\#=\overline
N(p_{n_j,\infty}(x),\epsilon_j)\cap M_j$, and\item
$D_{x,j}=\overline\st(v_{x,j},\widetilde
T_j)\cap|\widetilde T_j^{(n)}|$.
\end{enumerate}
\end{definition}

Notice that $\widetilde T_j$ induces a unique triangulation
$\widetilde L_{x,j}$ of $\overline\st(v_{x,j},\widetilde T_j)$.
So one has, from  Definition \ref{somesets}(4), that $D_{x,j}=
|\widetilde L_{x,j}^{(n)}|$.  Since
$\overline\st(v_{x,j},\widetilde T_j)$ is contractible and
$\dim\widetilde L_{x,j}^{(n)}\leq n$, then $D_{x,j}$ contracts
to a point in $|\widetilde L_{x,j}^{(n+1)}|\subset|\widetilde
L_{x,j}|$. On the other hand, since $T_j$ is a subdivision of
$\widetilde T_j$, it follows that $T_j$ induces a unique
triangulation $L_{x,j}$ of $\overline\st(v_{x,j},\widetilde
T_j)$, and $|\widetilde L_{x,j}^{(n)}|\subset|L_{x,j}^{(n)}|$.
So if $M_j=|T_j^{(n)}|$, then $B_{x,j}^+=|L_{x,j}^{(n)}|$, and
$D_{x,j}\subset B_{x,j}^+$. Since
$\varphi_j:|T_j|\to|\widetilde T_j|$ in Lemma
\ref{enhancedbigpicture} is a simplicial approximation to the
identity on $|T_j|=|\widetilde T_j|$, we get that
$\widehat\varphi_j (|T_j^{(n)}|)=|\widetilde T_j^{(n)}|$ and
$\widehat\varphi_j(B_{x,j}^+)= D_{x,j}$.   Another fact of
importance to us is that since $\overline\st(v_{x,j},\widetilde
T_j)$ is connected, then both $B_{x,j}^+$ and $D_{x,j}$ are
nonempty metrizable continua. Let us record the preceding
information now.

\begin{lemma}\label{DtoB+}Let $B_{x,j}^+$ and $D_{x,j}$ be as in
$\mathrm{Definition\,\ref{somesets}(2,4)}$ where
$M_j=|T_j^{(n)}|$.  Then \begin{enumerate}\item
$B_{x,j}^+=|L_{x,j}^{(n)}|$ and $D_{x,j}=|\widetilde
L_{x,j}^{(n)}|$, $L_{x,j}$ being a subcomplex of $T_j$ and
$\widetilde L_{x,j}$ being a subcomplex of $\widetilde
T_j$,\item $B_{x,j}^+$ and $D_{x,j}$ are nonempty metrizable
continua,\item $D_{x,j}\subset B_{x,j}^+$,\item
$\widehat\varphi_j (|T_j^{(n)}|)=|\widetilde T_j^{(n)}|$,\item
$\widehat\varphi_j(B_{x,j}^+)=D_{x,j}$, and\item$D_{x,j}$
contracts to a point in $|\widetilde L_{x,j}^{(n+1)}|$.\qed
\end{enumerate}
\end{lemma}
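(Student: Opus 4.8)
The plan is to recognize that five of the six assertions are pure bookkeeping about the two triangulations of a single closed star, and that only (6) needs genuine homotopy input. First I would fix the contractible closed star $\overline\st(v_{x,j},\widetilde T_j)$ and name the subcomplexes triangulating it: let $\widetilde L_{x,j}=S_{v_{x,j},\widetilde T_j}$ be the unique subcomplex of $\widetilde T_j$ with $|\widetilde L_{x,j}|=\overline\st(v_{x,j},\widetilde T_j)$, and let $L_{x,j}$ be the subcomplex of $T_j$ triangulating the same polyhedron, which exists because $T_j$ subdivides $\widetilde T_j$. For (1) I would use that, for a subcomplex $S$ of a complex $U$, one has $|S|\cap|U^{(n)}|=|S^{(n)}|$; intersecting $\overline\st(v_{x,j},\widetilde T_j)=|L_{x,j}|=|\widetilde L_{x,j}|$ with the respective $n$-skeleta and recalling $M_j=|T_j^{(n)}|$ then yields $B_{x,j}^+=|L_{x,j}^{(n)}|$ and $D_{x,j}=|\widetilde L_{x,j}^{(n)}|$. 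Claim (3) follows at once, since a simplex of $\widetilde T_j$ of dimension $\le n$ is subdivided by $T_j$ into simplices of dimension $\le n$, so $|\widetilde T_j^{(n)}|\subset|T_j^{(n)}|$ and hence $D_{x,j}=|\widetilde L_{x,j}^{(n)}|\subset|L_{x,j}^{(n)}|=B_{x,j}^+$.

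For (4) and (5) I would exploit that $\varphi_j\colon|T_j|\to|\widetilde T_j|$ is a simplicial approximation to the identity. Viewing $\varphi_j$ as a simplicial map $T_j\to\widetilde T_j$ gives $\widehat\varphi_j(|T_j^{(n)}|)\subset|\widetilde T_j^{(n)}|$, because a simplicial map carries each $k$-simplex onto a simplex of dimension $\le k$. For the reverse inclusion I would note that $\varphi_j$ is a $\widetilde T_j$-modification of the identity (Definition \ref{modif}): each point lands in the simplex supporting it, so $\varphi_j$ fixes every vertex of $\widetilde T_j$ and maps each simplex of $\widetilde T_j$ into itself. The straight-line homotopy of Lemma \ref{modishotop}, applied simplexwise, makes the restriction of $\varphi_j$ to each simplex degree one, hence surjective onto that simplex; this covers $|\widetilde T_j^{(n)}|$ and establishes the equality in (4). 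The identical argument restricted to the subcomplex $\widetilde L_{x,j}$, using that $\varphi_j$ preserves $|\widetilde L_{x,j}|=\overline\st(v_{x,j},\widetilde T_j)$, gives $\widehat\varphi_j(B_{x,j}^+)=\widehat\varphi_j(|L_{x,j}^{(n)}|)=|\widetilde L_{x,j}^{(n)}|=D_{x,j}$, which is (5). For (2), both $B_{x,j}^+$ and $D_{x,j}$ are compact polyhedra containing the vertex $v_{x,j}$, so they are nonempty, and they inherit connectedness from the connected closed star; thus they are metrizable continua.

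The only step needing real homotopy-theoretic input is (6): that $D_{x,j}=|\widetilde L_{x,j}^{(n)}|$ contracts to a point inside $|\widetilde L_{x,j}^{(n+1)}|$. Since $|\widetilde L_{x,j}|$ is contractible, the inclusion of $D_{x,j}$ is null-homotopic there, and the real task is to confine the null-homotopy to the $(n+1)$-skeleton. I would realize this null-homotopy as a map of the cone on $D_{x,j}$, a complex of dimension $\dim\widetilde L_{x,j}^{(n)}+1\le n+1$, into $|\widetilde L_{x,j}|$ that agrees with the inclusion on the base. Because the skeletal inclusion $|\widetilde L_{x,j}^{(n+1)}|\hookrightarrow|\widetilde L_{x,j}|$ is $(n+1)$-connected, cellular approximation permits a deformation of this cone-map, rel its base (which already lies in $|\widetilde L_{x,j}^{(n)}|\subset|\widetilde L_{x,j}^{(n+1)}|$), into $|\widetilde L_{x,j}^{(n+1)}|$, yielding the required contraction. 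I expect this dimension-against-connectivity step to be the one genuine obstacle; assertions (1)--(5) are formal consequences of the skeleton-and-subdivision formalism together with the defining property of a simplicial approximation.
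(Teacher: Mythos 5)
Your proposal is correct and takes essentially the same route as the paper, whose entire justification is the paragraph preceding the lemma: the induced triangulations $L_{x,j}$ and $\widetilde L_{x,j}$ of the closed star $\overline\st(v_{x,j},\widetilde T_j)$, the subdivision inclusion $|\widetilde L_{x,j}^{(n)}|\subset|L_{x,j}^{(n)}|$ for (3), the simplicial approximation $\varphi_j$ to the identity for (4)--(5), and contractibility of the star together with $\dim\widetilde L_{x,j}^{(n)}\leq n$ for (6). You merely make explicit the standard facts the paper invokes without proof --- the degree-one argument that a simplicial approximation to the identity carries each simplex of $\widetilde T_j$ onto itself, and cellular approximation rel the cone base for (6) --- with the one caveat (shared verbatim with the paper's own wording) that connectedness of the closed star yields connectedness of the $n$-skeleta in (2) only when $n\geq1$.
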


Here is the definition that will provide the flexibility
that we mentioned above.

\begin{definition}\label{adjustedinvseq}
Suppose that $(j_i)$ is an increasing sequence in $\N$ and
for each $i$, we are given a map $h_i^{i+1}:M_{j_{i+1}}\to M_{j_i}$
such that $h_i^{i+1}$ is a $T_{j_i}$-modification of the restriction
$g_{j_i}^{j_{i+1}}|M_{j_{i+1}}:M_{j_{i+1}}\to M_{j_i}$.
Then we shall refer to
$\mathbf{M}=(M_{j_i},h_i^{i+1})$ as an {\bf adjustment} of
$\mathbf{Z}$.
\end{definition}

As a consequence of Definition \ref{adjustedinvseq}, $(*_1)$,
Lemma \ref{enhancedbigpicture}(8), and Lemma \ref{modishotop},
we obtain a statement about adjustments.

\begin{lemma}\label{skeletaladj} Let $(j_i)$ be an increasing sequence in $\N$.
Then,
\begin{enumerate}\item
$\mathbf{M}=(|T_{j_i}^{(n)}|,\hat g_{j_i}^{j_{i+1}})$ is an
adjustment of $\mathbf{Z}$, and \item if $(|T_{j_i}^{(n)}|,h_i^{i+1})$ is
an adjustment of $\mathbf{Z}$, then for all $i$, $h_i^{i+1}\simeq
\hat g_{j_i}^{j_{i+1}}$.
\end{enumerate}
\end{lemma}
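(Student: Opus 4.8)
The plan is to prove the two parts of Lemma~\ref{skeletaladj} in sequence, using the definitions and facts assembled in the preceding paragraphs.

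For part (1), I must verify that $\mathbf{M}=(|T_{j_i}^{(n)}|,\hat g_{j_i}^{j_{i+1}})$ satisfies Definition~\ref{adjustedinvseq} with $M_{j_i}=|T_{j_i}^{(n)}|$ and $h_i^{i+1}=\hat g_{j_i}^{j_{i+1}}$. The two things to check are that each $M_j=|T_j^{(n)}|$ is a valid choice of closed subset (nonempty, closed, with $g_j^{j+1}(M_{j+1})\subset M_j$), and that $\hat g_{j_i}^{j_{i+1}}$ is a $T_{j_i}$-modification of the restriction $g_{j_i}^{j_{i+1}}|M_{j_{i+1}}$. The first point is routine: the $n$-skeleton $|T_j^{(n)}|$ is a closed subpolyhedron, it is nonempty, and since each $g_j^{j+1}$ is simplicial (by Lemma~\ref{enhancedbigpicture}) a simplicial map carries the $n$-skeleton into the $n$-skeleton, so $g_j^{j+1}(|T_{j+1}^{(n)}|)\subset|T_j^{(n)}|$. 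The second point is even more immediate: by the definition $(*_1)$, $\hat g_{j_i}^{j_{i+1}}$ is literally the restriction $g_{j_i}^{j_{i+1}}\big||T_{j_{i+1}}^{(n)}|$, so for every $x$ it agrees with $g_{j_i}^{j_{i+1}}$ and hence lands in the same simplex as $g_{j_i}^{j_{i+1}}(x)$; this is trivially a $T_{j_i}$-modification of itself restricted. Thus $\mathbf{M}$ is an adjustment.

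For part (2), I would argue as follows. Let $(|T_{j_i}^{(n)}|,h_i^{i+1})$ be an arbitrary adjustment. By Definition~\ref{adjustedinvseq}, each $h_i^{i+1}$ is a $T_{j_i}$-modification of $g_{j_i}^{j_{i+1}}|M_{j_{i+1}}=g_{j_i}^{j_{i+1}}\big||T_{j_{i+1}}^{(n)}|=\hat g_{j_i}^{j_{i+1}}$. The point is that being a $T_{j_i}$-modification of a given map is precisely the hypothesis of Lemma~\ref{modishotop}, once we know the target triangulation is finite. I would invoke Lemma~\ref{modishotop} with $T=T_{j_i}$ (a finite triangulation, since all the $T_j$ are finite), with $X=|T_{j_{i+1}}^{(n)}|$ as the domain space, with $f=\hat g_{j_i}^{j_{i+1}}$, and with $g=h_i^{i+1}$; that lemma then yields directly $h_i^{i+1}\simeq\hat g_{j_i}^{j_{i+1}}$, which is the claim.

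I do not anticipate a genuine obstacle here: the lemma is a formal consequence of the definitions, the simplicial-skeleton observation, and the straight-line homotopy packaged in Lemma~\ref{modishotop}. The only subtlety worth pinning down is that the target of $h_i^{i+1}$ for the purpose of applying the straight-line homotopy must be viewed inside $|T_{j_i}|$ (so that simplexes of $T_{j_i}$ are available as the convex cells within which the straight-line homotopy stays), rather than inside the possibly non-convex subpolyhedron $|T_{j_i}^{(n)}|$; but since a $T_{j_i}$-modification keeps each image point in the same simplex $\sigma$ of $T_{j_i}$ that already contains the reference point, the straight-line homotopy remains inside $\sigma\subset|T_{j_i}|$, and Lemma~\ref{modishotop} applies verbatim. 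The homotopy so produced has image in $|T_{j_i}|$, which is exactly what the statement $h_i^{i+1}\simeq\hat g_{j_i}^{j_{i+1}}$ requires.
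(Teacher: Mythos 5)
Your proposal is correct and matches the paper's own justification, which offers no separate proof but derives the lemma from exactly the ingredients you use: Definition \ref{adjustedinvseq} and $(*_1)$ (making $\hat g_{j_i}^{j_{i+1}}$ trivially a $T_{j_i}$-modification of itself), the simpliciality of the bonding maps from Lemma \ref{enhancedbigpicture}(8) (whence skeleta are preserved), and Lemma \ref{modishotop} for part (2). Your closing remark that the straight-line homotopy lives in $|T_{j_i}|$ rather than in the possibly non-convex skeleton $|T_{j_i}^{(n)}|$ is a correct reading of how Lemma \ref{modishotop} is meant to be applied here.
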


\begin{lemma}\label{gettrivadj}
If we take $n=\infty$ and $j_i=i$ for all $i$, then the
adjustment $\mathbf{M}=(|T_{j_i}^{(n)}|,\hat
g_i^{i+1})=(|T_i|,g_i^{i+1})$ of $\mathrm{Lemma\,
\ref{skeletaladj}(1)}$ equals $\mathbf{Z}$.\qed
\end{lemma}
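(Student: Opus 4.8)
The plan is to verify directly that each term and each bonding map of $\mathbf{M}$ coincides with the corresponding term and bonding map of $\mathbf{Z}$; the whole content is definitional bookkeeping once the convention $T^{(\infty)}=T$ is invoked. So the argument will not require any homotopy-theoretic input, only an unwinding of the notation introduced in $(*_1)$ and in Lemma \ref{skeletaladj}(1).

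First I would recall that convention, which yields $T_k^{(\infty)}=T_k$ and hence $|T_k^{(\infty)}|=|T_k|$ for every $k\in\N$. Setting $n=\infty$ in $(*_1)$, the map $\hat g_i^k=g_i^k\big||T_k^{(n)}|$ becomes $g_i^k\big||T_k|$, which is just $g_i^k$ itself, since restricting a map to the whole of its domain changes nothing. Thus with $n=\infty$ every skeletal restriction $\hat g_i^k$ equals the original bonding composite $g_i^k$. I would then substitute $j_i=i$, so that $j_{i+1}=i+1$; the terms of the adjustment $\mathbf{M}=(|T_{j_i}^{(n)}|,\hat g_{j_i}^{j_{i+1}})$ produced by Lemma \ref{skeletaladj}(1) become $|T_{j_i}^{(n)}|=|T_i^{(\infty)}|=|T_i|$, while its bonding maps become $\hat g_{j_i}^{j_{i+1}}=\hat g_i^{i+1}=g_i^{i+1}\big||T_{i+1}|=g_i^{i+1}$. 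Comparing with $\mathbf{Z}=(|T_j|,g_j^{j+1})$ term by term then gives $\mathbf{M}=\mathbf{Z}$.

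There is no genuine obstacle here: the statement is a sanity check confirming that the adjustment machinery specializes to the original sequence when no skeleton is truncated and no proper subsequence is taken. The only point deserving attention is recognizing that the $\infty$-skeleton convention renders the restrictions in $(*_1)$ vacuous, so that $\hat g_i^{i+1}$ and $g_i^{i+1}$ are literally the same map rather than merely homotopic; in this degenerate case the $T_{j_i}$-modification condition of Definition \ref{adjustedinvseq} is met by the bonding maps of $\mathbf{Z}$ themselves, which is exactly why equality, and not just isomorphism of inverse sequences, holds.
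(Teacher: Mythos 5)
Your proposal is correct and matches the paper exactly: the paper states Lemma \ref{gettrivadj} with \qed and no written proof, treating it as the immediate definitional unwinding you carry out, namely that the convention $T^{(\infty)}=T$ makes the restriction in $(*_1)$ vacuous and $j_i=i$ makes the subsequence trivial, so $\hat g_i^{i+1}=g_i^{i+1}$ and $\mathbf{M}=\mathbf{Z}$. Your added remark that each $g_i^{i+1}$ is trivially a $T_i$-modification of itself (so Definition \ref{adjustedinvseq} is satisfied) is a correct and appropriate touch of care.
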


\begin{definition}\label{trivadj}We shall call the adjustment
of $\mathbf{Z}$ coming from $\mathrm{Lemma\,\ref{gettrivadj}}$
the {\bf trivial adjustment} of $\mathbf{Z}$.
\end{definition}

This way we can create a theory of adjustments that takes into account
$\mathbf{Z}$ and all other adjustments.
The fundamentals for this are in Lemma \ref{altAJR}.

\begin{definition}\label{adjustseq}Let $(j_i)$ be an increasing
sequence in $\N$,
$\mathbf{M}=(M_{j_i},h_i^{i+1})$ be an adjustment of $\mathbf{Z}$,
and $M=\lim\mathbf{M}$. Whenever a point $w=(a_1,a_2,\dots)\in
M$, then $(a_i)$ is a sequence in $I^\infty$ which we shall call the
sequence {\bf associated with} $w$. For each $i\in\N$, we get a
function $\pi_i:M\to I^\infty$ by setting $\pi_i(w)=a_i$.
\end{definition}

We shall now give a form of Lemma 3.1 of \cite{AJR} that is
suited to our present situation.  In Lemma \ref{altAJR}, we
shall use the notation from Definition \ref{somesets}. It is
worth mentioning that this lemma is independent of the
choice of the fixed $n\geq0$, even $n=\infty$.

\begin{lemma}\label{altAJR}Let $(j_i)$ be an increasing
sequence in $\N$,
$\mathbf{M}=(M_{j_i},h_i^{i+1})$ be an adjustment of $\mathbf{Z}$, and
$M=\lim\mathbf{M}$. Then,
\begin{enumerate}\item for each $w=(a_1,a_2,\dots)\in M$, the
sequence $(a_i)$ in $I^\infty$ associated with $w$ is a
Cauchy sequence in $I^\infty$ whose limit lies in $X$,\item
the sequence $(\pi_i)$, $\pi_i:M\to I^\infty$, is a Cauchy
sequence of maps whose limit $\pi:M\to I^\infty$ is a map
having the property that $\pi(M)\subset X$,\item for each
$x\in X$ and $i\in\N$, $B_{x,j_i}\subset B_{x,j_i}^+\subset
B_{x,j_i}^\#$, and $h_i^{i+1}(B_{x,j_{i+1}}^\#)\subset
B_{x,j_i}$,\item if for each $x\in X$, we define
$\mathbf{M}_x=(B_{x,j_i},h_i^{i+1}|B_{x,j_{i+1}})$,
$\mathbf{M}_x^+=(B_{x,j_i}^+,h_i^{i+1}|B_{x,j_{i+1}}^+)$,
and
$\mathbf{M}_x^\#=(B_{x,j_i}^\#,h_i^{i+1}|B_{x,j_{i+1}}^\#)$,
then each of $\mathbf{M}_x$, $\mathbf{M}_x^+$, and
$\mathbf{M}_x^\#$ is an inverse sequence of metrizable
compacta,\item for all $x\in X$,
$\lim\mathbf{M}_x=\lim\mathbf{M}_x^+=\lim\mathbf{M}_x^\#$,\item
for all $x\in X$, $\pi^{-1}(x)=\lim\mathbf{M}_x$, and\item
if for all $i\in\N$, $|T_{j_i}^{(0)}|\subset M_{j_i}$, then
$\pi:M\to X$ is surjective.
\end{enumerate}
\end{lemma}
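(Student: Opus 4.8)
The plan is to distill one local estimate from the simplicial data of Lemma \ref{enhancedbigpicture} and then use it throughout. First I would prove that for every $y\in M_{j_{i+1}}$,
$$\rho\bigl(h_i^{i+1}(y),p_{n_{j_i},\infty}(y)\bigr)<\delta_{j_i}.\qquad(\star)$$
This comes from two comparisons, each of diameter at most $\mesh T_{j_i}<\delta_{j_i}/2$ by Lemma \ref{enhancedbigpicture}(4): because $h_i^{i+1}$ is a $T_{j_i}$-modification of $g_{j_i}^{j_{i+1}}$, the points $h_i^{i+1}(y)$ and $g_{j_i}^{j_{i+1}}(y)$ lie in the carrier simplex of $g_{j_i}^{j_{i+1}}(y)$; and because $g_{j_i}^{j_{i+1}}$ is a simplicial approximation of $p_{n_{j_i}}^{n_{j_{i+1}}}|P_{j_{i+1}}$ (Lemma \ref{enhancedbigpicture}(8)), the points $g_{j_i}^{j_{i+1}}(y)$ and $p_{n_{j_i}}^{n_{j_{i+1}}}(y)=p_{n_{j_i},\infty}(y)$ share a simplex. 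Using $(\star)$ I would handle (1) and (2) together: for a thread $(a_i)$, split $\rho(a_i,a_{i+1})\le\rho(a_i,p_{n_{j_i},\infty}(a_{i+1}))+\rho(p_{n_{j_i},\infty}(a_{i+1}),a_{i+1})$, bound the first summand by $(\star)$ and the second by $2^{-n_{j_i}}$ (the coordinates past $n_{j_i}$), and use Lemma \ref{enhancedbigpicture}(3) to get $\rho(a_i,a_{i+1})<3\cdot2^{-n_{j_i}}$. As $(n_{j_i})$ strictly increases this is summable, and the bound does not depend on the thread, so $(a_i)$ is Cauchy and $(\pi_i)$ is a uniformly Cauchy sequence of continuous maps whose uniform limit $\pi$ is continuous. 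That $\pi(M)\subset X$ follows from Corollary \ref{picture1}(2): for fixed $j$ and large $i$, $p_{n_j,\infty}(a_i)\in P_j$ since $a_i\in P_{j_i}$ and $p_{n_j}^{n_{j_i}}(P_{j_i})\subset P_j$, and closedness of $P_j$ passes this to the limit $a$, placing $a$ in every $P_j\times I^\infty_{n_j}$.

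For (3), the chain $B_{x,j_i}\subset B_{x,j_i}^+\subset B_{x,j_i}^\#$ drops out of intersecting the set inclusions of Lemma \ref{enhancedbigpicture}(7) with $M_{j_i}\subset P_{j_i}$. The containment $h_i^{i+1}(B_{x,j_{i+1}}^\#)\subset B_{x,j_i}$ is the heart of the lemma: for $y\in B_{x,j_{i+1}}^\#$ one has $\rho(y,p_{n_{j_{i+1}},\infty}(x))\le\epsilon_{j_{i+1}}$, so Lemma \ref{enhancedbigpicture}(1) with $s=j_i<j_{i+1}$ (used in the closed-ball form, which follows from the convexity of $I^\infty$) gives $\rho(p_{n_{j_i},\infty}(y),p_{n_{j_i},\infty}(x))\le\delta_{j_i}$; adding $(\star)$ yields $\rho(h_i^{i+1}(y),p_{n_{j_i},\infty}(x))<2\delta_{j_i}$, and since $h_i^{i+1}(y)\in M_{j_i}$ this is membership in $B_{x,j_i}$. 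Part (4) is then formal: the three families consist of closed, hence compact metrizable, subsets of the $M_{j_i}$, and (3) together with $B_{x,j_i}\subset B_{x,j_i}^+\subset B_{x,j_i}^\#$ shows each bonding map restricts into the correct term.

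Part (5) is a squeeze. The term inclusions give $\lim\mathbf{M}_x\subset\lim\mathbf{M}_x^+\subset\lim\mathbf{M}_x^\#$ at once. For the reverse, if $(a_i)\in\lim\mathbf{M}_x^\#$ then for every $i$, $a_i=h_i^{i+1}(a_{i+1})\in h_i^{i+1}(B_{x,j_{i+1}}^\#)\subset B_{x,j_i}$ by (3), so $(a_i)\in\lim\mathbf{M}_x$, collapsing the chain to equalities. For (6), the inclusion $\lim\mathbf{M}_x\subset\pi^{-1}(x)$ is clear: $a_i\in B_{x,j_i}$ forces $\rho(a_i,p_{n_{j_i},\infty}(x))\le2\delta_{j_i}\to0$, and since $p_{n_{j_i},\infty}(x)\to x$ we get $a_i\to x$, i.e.\ $\pi((a_i))=x$. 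Conversely, given $\pi((a_i))=x$, I would show $(a_i)\in\lim\mathbf{M}_x^\#$ and then invoke (5): estimate $\rho(a_i,p_{n_{j_i},\infty}(x))\le\rho(a_i,p_{n_{j_i},\infty}(a_{i+1}))+\rho(p_{n_{j_i},\infty}(a_{i+1}),p_{n_{j_i},\infty}(x))$, bound the first term by $(\star)$ and the second, by non-expansiveness of the projections, by $\rho(a_{i+1},x)<6\cdot2^{-n_{j_{i+1}}}\le3\cdot2^{-n_{j_i}}$ (the summed Cauchy estimate of (1)); this totals below $\delta_{j_i}+3\cdot2^{-n_{j_i}}<5\cdot2^{-n_{j_i}}<\epsilon_{j_i}$, so $a_i\in B_{x,j_i}^\#$.

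Finally, for (7), since $T_{j_i}$ subdivides $\widetilde T_{j_i}$ we have $\widetilde T_{j_i}^{(0)}\subset T_{j_i}^{(0)}$, so the vertex $v_{x,j_i}$ lies in $|T_{j_i}^{(0)}|\subset M_{j_i}$ and in $\overline\st(v_{x,j_i},\widetilde T_{j_i})$; hence each $B_{x,j_i}^+$ is nonempty, so $\pi^{-1}(x)=\lim\mathbf{M}_x^+$ is the limit of an inverse sequence of nonempty compacta and is nonempty, giving surjectivity. The hard part, and the place where the construction earns its keep, is the reverse inclusion in (6): the margins are tight, and the estimate only closes because the slacks tuned in Lemma \ref{enhancedbigpicture}(2)--(4) ($\delta_{j_i}<2^{1-n_{j_i}}$, $\mesh T_{j_i}<\delta_{j_i}/2$, $5\cdot2^{-n_{j_i}}<\epsilon_{j_i}$) keep $\delta_{j_i}+3\cdot2^{-n_{j_i}}$ strictly below $\epsilon_{j_i}$, which is also why I route through $\mathbf{M}_x^\#$ rather than through $\mathbf{M}_x$ directly.
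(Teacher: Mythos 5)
Your proposal is correct and follows essentially the same route as the paper's proof: your $(\star)$ is the paper's key estimate $(\dag_2)$ (the paper gets the sharper bound $\frac{\delta_{j_i}}{2}$ by noting that the carrier of $g_{j_i}^{j_{i+1}}(u)$ is a \emph{face} of the carrier simplex of $p_{n_{j_i}}^{n_{j_{i+1}}}(u)$, so all three points lie in one simplex, whereas your two-simplex version gives $\delta_{j_i}$; your resulting Cauchy increment $3\cdot2^{-n_{j_i}}$ instead of the paper's $2^{1-n_{j_i}}$ still closes the decisive estimate in (6) since $\delta_{j_i}+3\cdot2^{-n_{j_i}}<5\cdot2^{-n_{j_i}}<\epsilon_{j_i}$ by Lemma \ref{enhancedbigpicture}(2),(3)). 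The remaining steps --- telescoping for (1),(2), routing the reverse inclusion of (6) through $\mathbf{M}_x^\#$ and collapsing the chain via (3), and nonemptiness-by-compactness for (7) --- match the paper's argument, with only cosmetic variants: you explicitly justify the closed-ball use of Lemma \ref{enhancedbigpicture}(1) (which the paper passes over silently) and you exhibit $v_{x,j_i}\in\widetilde T_{j_i}^{(0)}\subset T_{j_i}^{(0)}$ in $B^+_{x,j_i}$, where the paper instead finds an arbitrary vertex of $T_{j_i}$ within $2\delta_{j_i}$ of $p_{n_{j_i},\infty}(x)$ using $\mesh T_{j_i}<\frac{\delta_{j_i}}{2}$.
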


\begin{proof}For each $i\in\N$, put $m_i=n_{j_i}$.
Our choice of metric for $I^\infty$ shows that,

\smallskip
$(\dag_1)$  for each $i\in\N$ and $x\in I^\infty$,
$\rho(p_{m_i,\infty}(x),x)\leq2^{-m_i}$.\smallskip

Let $u\in M_{j_{i+1}}\subset
|T|_{j_{i+1}}\subset I^{m_{i+1}}$.  Applying Lemma
\ref{enhancedbigpicture}(8), there is a simplex $\sigma$ of
$T_{j_i}$, and a face $\tau$ of $\sigma$ such that
$p_{m_i}^{m_{i+1}}(u)\in\inter\sigma$ and
$g_{j_i}^{j_{i+1}}(u)\in\inter\tau$.  By Definition
\ref{adjustedinvseq}, $h_i^{i+1}(u)\in\tau$.  Hence
$\{h_i^{i+1}(u),p_{m_i}^{m_{i+1}}(u)\}\subset\sigma$. It
follows from this and Lemma \ref{enhancedbigpicture}(3,4)
that,\smallskip

$(\dag_2)$  for each $i\in\N$ and $u\in M_{j_{i+1}}$,
$\rho(h_i^{i+1}(u),p_{m_i}^{m_{i+1}}(u))<
\frac{\delta_{j_i}}{2}<2^{-m_i}$.\smallskip

Now let $w=(a_1,a_2,\dots)\in M$. Thus,
$\rho(a_i,a_{i+1})=\rho(h_i^{i+1}(a_{i+1}),a_{i+1})$. We know
that $a_{i+1}\in M_{j_{i+1}}$.  So an application of $(\dag_2)$
gives us,\smallskip

$(\dag_3)$  for all $w=(a_1,a_2,\dots)\in M$,
$\rho(h_i^{i+1}(a_{i+1}),p_{m_i}^{m_{i+1}}
(a_{i+1}))<2^{-m_i}$.\smallskip

If one applies the triangle inequality and uses $(\dag_3)$
and $(\dag_1)$ with $x=a_{i+1}$, one gets that
$\rho(a_i,a_{i+1})=\rho(h_i^{i+1}(a_{i+1}),a_{i+1})\leq
\rho(h_i^{i+1}(a_{i+1}),p_{m_i}^{m_{i+1}}(a_{i+1}))
+\rho(p_{m_i}^{m_{i+1}}(a_{i+1}),a_{i+1})
<2^{-m_i}+2^{-m_i}=2^{1-m_i}$ independently of the choice of
$w\in M$.  We record this fact:\smallskip

$(\dag_4)$  Whenever $w=(a_1,a_2,\dots)\in M$ and $i\in\N$, one
has that $\rho(a_i,a_{i+1})<2^{1-m_i}$.\smallskip

Thus $(a_i)$ is a Cauchy sequence in $I^\infty$, and
$(\pi_i)$ is a Cauchy sequence of maps of $M$ to $I^\infty$ whose
limit $\pi$ is a map of $M$ to $I^\infty$.  But for each $i$, $a_i\in
M_{j_i}\subset|T_{j_i}|\subset|T_{j_i}|\times I^\infty_{m_i}$, so an
application of Corollary \ref{picture1} yields that
$\pi(M)\subset X$.  We have established (1) and (2).

Let $x\in X$.  The first part of (3) comes from Lemma
\ref{enhancedbigpicture}(7).  Let $u\in B_{x,j_{i+1}}^\#$. Then,
it is true that $\rho(h_i^{i+1}(u),p_{m_i,\infty}(x))\leq
\rho(h_i^{i+1}(u),p_{m_i}^{m_{i+1}}(u))
+\rho(p_{m_i}^{m_{i+1}}(u),p_{m_i,\infty}(x))$ $=
\rho(h_i^{i+1}(u),p_{m_i}^{m_{i+1}}(u))
+\rho(p_{m_i}^{m_{i+1}}(u),p_{m_i}^{m_{i+1}}\circ
p_{m_{i+1},\infty}(x))$.  In $(\dag_2)$ we recorded that
$\rho(h_i^{i+1}(u),p_{m_i}^{m_{i+1}}(u))<\frac{\delta_{j_i}}{2}$.
But $u\in B_{x,j_{i+1}}^\#$ implies that
$\rho(u,p_{m_{i+1},\infty}(x))<\epsilon_{j_{i+1}}$.  It follows
from Lemma \ref{enhancedbigpicture}(1) that
$\rho(p_{m_i}^{m_{i+1}}(u),p_{m_i}^{m_{i+1}}\circ
p_{m_{i+1},\infty}(x))<\delta_{j_i}$. We therefore conclude that
$h_i^{i+1}(u)\in\overline N(p_{m_i,\infty}(x),2\delta_{j_i})$, so the
second part of (3) is substantiated.  From (3), both (4) and
(5) follow handily.  We must prove (6).

Suppose that $(a_1,a_2,\dots)\in\lim\mathbf{M}_x$ and $i\in\N$.
Then $a_i\in B_{x,j_i}$, so
$\rho(a_i,p_{m_i,\infty}(x))\leq2\delta_{j_i}$. If we apply
this, $(\dag_1)$, and Lemma \ref{enhancedbigpicture}(3), we conclude that
$\rho(a_i,x)\leq\rho(a_i,p_{m_i,\infty}(x))+\rho(p_{m_i,\infty}(x),x)\leq
2\delta_{j_i}+2^{-m_i}<2^{2-m_i}+2^{-m_i}$.  Therefore,
$\pi((a_i))=\lim(a_i)=x$, so we have shown that
$\lim\mathbf{M}_x\subset\pi^{-1}(x)$.  We have to establish the
opposite inclusion.

Suppose that a thread $(a_1,a_2,\dots)$ of $\mathbf{M}$ lies in
$\pi^{-1}(x)$.  For the next, make use of $(\dag_1)$,
$(\dag_4)$, and Lemma \ref{enhancedbigpicture}(2).  For all $i\in\N$,
$\rho(a_i,p_{m_i,\infty}(x))\leq\rho(a_i,x)+\rho(x,p_{m_i,\infty}(x))\leq
\sum_{k=i}^\infty\rho(a_k,a_{k+1})+2^{-m_i}\leq
\sum_{k=i}^\infty2^{1-m_k}+2^{-m_i}\leq2\cdot2^{1-m_i}+2^{-m_i}=
5\cdot2^{-m_i}<\epsilon_{m_i}$.  This puts $a_i\in B_{x,i}^\#$.
So, $(a_1,a_2,\dots)\in\lim\mathbf{M}_x^\#=\lim\mathbf{M}_x$,
as required to complete the proof of (6).

To prove (7), we only need to show that for each $x\in X$,
$\pi^{-1}(x)\neq\emptyset$, and for this we use (6). It is
sufficient to demonstrate that for each $i\in\N$,
$B_{x,j_i}\neq\emptyset$.  Lemma \ref{enhancedbigpicture}(4)
states that $\mesh T_{j_i}<\frac{\delta_{j_i}}{2}$.  So
$\overline N(p_{m_i,\infty}(x),2\delta_{j_i})$ has to contain
some $v\in T_{j_i}^{(0)}$.  By hypothesis, $v\in M_{j_i}$.  So
$B_{x,j_i}\neq\emptyset$.
\end{proof}

More technical facts need to be established.

\begin{lemma}\label{technicals}Let $(j_i)$ be an increasing
sequence in $\N$ and $\mathbf{M}=(|T_{j_i}^{(n)}|,
\hat g_{j_i}^{j_{i+1}})$ be the adjustment of $\mathbf{Z}$ as
indicated in $\mathrm{Lemma\,\ref{skeletaladj}(1)}$.
Then for each $i<k$ in $\N$ and $x\in X$,
\begin{enumerate}\item$\hat g_{j_i}^{j_k}=\hat g_{j_i}^{j_{k-1}}\circ
\hat g_{j_{k-1}}^{j_k}$,\item$\hat g_{j_k-1}^{j_k}= \hat
f_{j_k-1}^{j_k}\circ\widehat\varphi_{j_k}$,\item $\hat
g_{j_i}^{j_k}=\hat g_{j_i}^{j_k-1}\circ \hat
f_{j_k-1}^{j_k}\circ\widehat\varphi_{j_k}$,\item$\hat
g_{j_i}^{j_k}(B_{x,j_k}^+)\subset B_{x,j_i}^+$,
and\item$\hat f_{j_k-1}^{j_k}(D_{x,j_k})\subset
B_{x,j_k-1}^+$.
\end{enumerate}
\end{lemma}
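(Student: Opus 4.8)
The plan is to read off (1)--(3) from factorizations already recorded, and then to prove (4) and (5) by a single metric estimate. For (1) I would invoke the inherited factorization $\hat g_{j_i}^{j_k}=\hat g_{j_i}^{s}\circ\hat g_{s}^{j_k}$ (valid for $j_i\le s\le j_k$) at $s=j_{k-1}$. Statement (2) is $(*_6)$ with $j_k$ in place of $k$, and (3) is $(*_7)$ with $j_k$ in place of $k$ --- equivalently, the inherited factorization at $s=j_k-1$ followed by (2). Here one only needs to observe that $i<k$ forces $j_i<j_k$, hence $j_i\le j_k-1$, so the index $j_k-1$ is legitimate and $\ge j_i\ge1$.

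For (4) and (5) I would run the following common computation. Fix $x\in X$ and take a point $z$ in the relevant source set: $z\in B_{x,j_k}^+$ for (4), $z\in D_{x,j_k}$ for (5). In either case $z\in\overline\st(v_{x,j_k},\widetilde T_{j_k})$, and the choice of $v_{x,j_k}$ in Lemma \ref{enhancedbigpicture}(7) together with $\mesh\widetilde T_{j_k}<\epsilon_{j_k}/2$ gives the \emph{strict} bound $\rho(z,p_{n_{j_k},\infty}(x))<\epsilon_{j_k}$. I then apply condition (1) of Lemma \ref{enhancedbigpicture} with $j=j_k$ and with $s=j_i$ (for (4)) or $s=j_k-1$ (for (5)); both are admissible since $j_k\ge2$ and $1\le s<j_k$. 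Using the projection identities $p_{n_s,\infty}(z)=p_{n_s}^{n_{j_k}}(z)$ and $p_{n_s,\infty}\circ p_{n_{j_k},\infty}(x)=p_{n_s,\infty}(x)$, this produces $\rho(p_{n_s}^{n_{j_k}}(z),p_{n_s,\infty}(x))<\delta_s$.

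Finally I would bring in the simplicial-approximation property. By Lemma \ref{enhancedbigpicture}(8) the map $g_{j_i}^{j_k}$ is a simplicial approximation of $p_{n_{j_i}}^{n_{j_k}}$, and by construction $f_{j_k-1}^{j_k}$ is a simplicial approximation of $p_{n_{j_k-1}}^{n_{j_k}}$; hence the image of $z$ shares a simplex of $T_s$ with $p_{n_s}^{n_{j_k}}(z)$, so the two lie within distance $\mesh T_s<\delta_s/2$ of one another. The triangle inequality then yields a distance $<\delta_s/2+\delta_s<2\delta_s$ from $p_{n_s,\infty}(x)$, placing the image in $\overline N(p_{n_s,\infty}(x),2\delta_s)\cap P_s\subset\overline\st(v_{x,s},\widetilde T_s)$ by Lemma \ref{enhancedbigpicture}(7). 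Since $\hat g_{j_i}^{j_k}$ and $\hat f_{j_k-1}^{j_k}$ are by definition restrictions whose images lie in the appropriate $n$-skeleton, the image in fact lies in $\overline\st(v_{x,s},\widetilde T_s)\cap|T_s^{(n)}|=B_{x,s}^+$; taking $s=j_i$ gives (4) and $s=j_k-1$ gives (5).

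The one point that demands care is securing the strict inequality $\rho(z,p_{n_{j_k},\infty}(x))<\epsilon_{j_k}$ needed to fire condition (1), since the \emph{recorded} form of Lemma \ref{enhancedbigpicture}(7) supplies only the closed ball $\overline N(p_{n_{j_k},\infty}(x),\epsilon_{j_k})$; I would recover strictness from the mesh estimate used inside that lemma's proof, where every point of $\overline\st(v_{x,j_k},\widetilde T_{j_k})$ is seen to be within $2\mesh\widetilde T_{j_k}<\epsilon_{j_k}$ of $p_{n_{j_k},\infty}(x)$. As a cross-check, (4) can alternatively be obtained by induction on $k-i$ from (3), Lemma \ref{DtoB+}(5) (namely $\widehat\varphi_{j_k}(B_{x,j_k}^+)=D_{x,j_k}$), and (5).
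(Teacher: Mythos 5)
Your proof is correct, and for (1)--(3) it coincides with the paper's: both read these identities off from $(*_5)$--$(*_7)$, specialized to the indices $j_i,j_k$ (your remark that $i<k$ forces $j_i\le j_k-1$ is the only hypothesis-check needed). For (4) and (5) you take a genuinely different route. The paper disposes of (4) by citing Lemma \ref{altAJR}(4): the containment $h_i^{i+1}(B_{x,j_{i+1}}^\#)\subset B_{x,j_i}\subset B_{x,j_i}^+$ from Lemma \ref{altAJR}(3) makes $\mathbf{M}_x^+$ an inverse sequence under the adjustment $\mathbf{M}$ of Lemma \ref{skeletaladj}(1), and composing consecutive bonds gives $\hat g_{j_i}^{j_k}(B_{x,j_k}^+)\subset B_{x,j_i}^+$; it then obtains (5) from (2), from (4) applied to $\hat g_{j_k-1}^{j_k}$, and from $\widehat\varphi_{j_k}(B_{x,j_k}^+)=D_{x,j_k}$ (Lemma \ref{DtoB+}(5)) --- note that your closing ``cross-check'' runs this dependence in the opposite direction, deriving (4) from (5). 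You instead inline the metric estimate: essentially the computation that proves Lemma \ref{altAJR}(3) (conditions (1), (4), (7) of Lemma \ref{enhancedbigpicture} together with the simplicial-approximation property), but applied in one stroke to the composite $g_{j_i}^{j_k}$ via condition (8), which covers arbitrary $i<k$ with no induction or composition, and applied directly to $f_{j_k-1}^{j_k}$ for (5). The paper's route buys brevity and modularity; yours buys self-containment and, notably, an honest treatment of a point the paper glosses: since the recorded statement of Lemma \ref{enhancedbigpicture}(7) places $\overline\st(v_{x,j_k},\widetilde T_{j_k})$ only in the \emph{closed} ball $\overline N(p_{n_{j_k},\infty}(x),\epsilon_{j_k})$, while condition (1) requires the strict hypothesis $\rho(u,v)<\epsilon_{j_k}$, strictness must be recovered from the estimate $\rho(z,p_{n_{j_k},\infty}(x))\le 2\mesh\widetilde T_{j_k}<\epsilon_{j_k}$ inside that lemma's proof, exactly as you do (the paper's own proof of Lemma \ref{altAJR}(3) silently asserts the strict bound for points of $B_{x,j_{i+1}}^\#$, where it is less innocent). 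Your remaining bookkeeping is also in order: $(*_1)$ and $(*_3)$ guarantee the images land in the $n$-skeletons, so intersecting $\overline\st(v_{x,s},\widetilde T_s)$ with $|T_s^{(n)}|$ indeed yields $B_{x,s}^+$ at $s=j_i$ and $s=j_k-1$, and $B_{x,j_k-1}^+$ is meaningful even though $j_k-1$ need not occur in the sequence $(j_i)$, since Definition \ref{somesets} is made for every index $j$.
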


\begin{proof}We get (1)-(3) from $(*_5)$-$(*_7)$ and (4)
from Lemma \ref{altAJR}(4) when applying the adjustment
$\mathbf{M}$ to the inverse sequence $\mathbf{M}_x^+$. One
arrives at (5) from Lemma \ref{DtoB+}(5), (2), and (4) as applied
to $\hat g_{j_k-1}^{j_k}$.
\end{proof}

\begin{lemma}\label{homofadjust}Let
$(|T_{j_i}^{(n)}|,h_i^{i+1})$ be an adjustment of $\mathbf{Z}$,
$x\in X$, and $i\in\N$.  Then,\begin{enumerate}\item
$h_i^{i+1}(B_{x,j_{i+1}}^+)\subset B_{x,j_i}^+$, and \item
$h_i^{i+1}|B_{x,j_{i+1}}^+\simeq\hat
g_{j_i}^{j_{i+1}}|B_{x,j_{i+1}}^+$ as maps to $B_{x,j_i}^+$.
\end{enumerate}
\end{lemma}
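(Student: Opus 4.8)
The plan is to obtain (1) instantly from the containments already recorded in Lemma \ref{altAJR}, and then to prove (2) by verifying that the straight-line homotopy of Lemma \ref{modishotop} can be confined to the subpolyhedron $B_{x,j_i}^+$.

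For (1), I would invoke Lemma \ref{altAJR}(3) directly. It supplies both the inclusions $B_{x,j_i}\subset B_{x,j_i}^+\subset B_{x,j_i}^\#$ and the containment $h_i^{i+1}(B_{x,j_{i+1}}^\#)\subset B_{x,j_i}$. Hence any $u\in B_{x,j_{i+1}}^+\subset B_{x,j_{i+1}}^\#$ satisfies $h_i^{i+1}(u)\in B_{x,j_i}\subset B_{x,j_i}^+$, which is exactly (1); no further work is needed.

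For (2), the governing idea is that $h_i^{i+1}$ is a $T_{j_i}$-modification of $\hat g_{j_i}^{j_{i+1}}$, so for each fixed $u$ both images land in a common simplex of $T_{j_i}$ and the linear segment between them stays in that (convex) simplex. Concretely, I would fix $u\in B_{x,j_{i+1}}^+$ and let $\sigma\in T_{j_i}$ be the carrier of $\hat g_{j_i}^{j_{i+1}}(u)$, i.e. the unique simplex with $\hat g_{j_i}^{j_{i+1}}(u)\in\inter(\sigma)$; Definition \ref{modif} then forces $h_i^{i+1}(u)\in\sigma$ as well. The straight-line homotopy $H(u,t)=(1-t)\,\hat g_{j_i}^{j_{i+1}}(u)+t\,h_i^{i+1}(u)$ is continuous in the ambient cube and, for each $u$, traces a segment inside $\sigma$.

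The main obstacle, and the crux of the argument, is to upgrade the target of $H$ from $|T_{j_i}^{(n)}|$ to the smaller set $B_{x,j_i}^+$, i.e. to show $\sigma\subset B_{x,j_i}^+$. Here I would exploit that $M_{j_i}=|T_{j_i}^{(n)}|$, so Lemma \ref{DtoB+}(1) identifies $B_{x,j_i}^+$ with $|L_{x,j_i}^{(n)}|$, the polyhedron of a subcomplex of $T_{j_i}$. Lemma \ref{technicals}(4) (with $k=i+1$) gives $\hat g_{j_i}^{j_{i+1}}(u)\in B_{x,j_i}^+=|L_{x,j_i}^{(n)}|$; since this point lies in $\inter(\sigma)$ and $|L_{x,j_i}^{(n)}|$ is a union of closed simplices of $T_{j_i}$, uniqueness of the carrier forces $\sigma$ to be one of those simplices, so $\sigma\subset B_{x,j_i}^+$. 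Therefore $H$ maps $B_{x,j_{i+1}}^+\times I$ into $B_{x,j_i}^+$ and witnesses $h_i^{i+1}|B_{x,j_{i+1}}^+\simeq\hat g_{j_i}^{j_{i+1}}|B_{x,j_{i+1}}^+$ as maps to $B_{x,j_i}^+$, establishing (2).
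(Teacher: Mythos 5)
Your proposal is correct and follows essentially the same route as the paper: part (1) from the containments of Lemma \ref{altAJR} (the paper cites item (4), whose content is exactly the inclusions you quote from item (3)), and part (2) from Lemma \ref{DtoB+}(1), Lemma \ref{technicals}(4), the $T_{j_i}$-modification property, and the straight-line homotopy of Lemma \ref{modishotop}. Your carrier argument showing $\sigma\subset B_{x,j_i}^+$ is just the explicit verification that \ref{modishotop} applies with the subcomplex $L_{x,j_i}^{(n)}$ in place of $T_{j_i}$, which the paper leaves implicit.
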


\begin{proof}Lemma \ref{altAJR}(4) implies (1).
By Lemma \ref{DtoB+}(1),
$B_{x,j_i}^+=|L_{x,j_i}^{(n)}|$ where $L_{x,j_i}$ is a
subcomplex of $T_{j_i}$. By this, Lemma \ref{technicals}(4),
the fact that $h_i^{i+1}$ is a $T_{j_i}$-modification of $\hat
g_{j_i}^{j_{i+1}}$, and Lemma \ref{modishotop} we get (2).
\end{proof}

\begin{lemma}\label{anullhom}Let $(j_i)$ be an increasing
sequence in $\N$ and $\mathbf{M}=(|T_{j_i}^{(n)}|,\hat
g_{j_i}^{j_{i+1}})$ be the adjustment of $\mathbf{Z}$ as
indicated in $\mathrm{Lemma\,\ref{skeletaladj}(1)}$. Then for
each $i<k$ in $\N$ and $x\in X$,
\begin{enumerate}\item$\alpha_{j_k}(D_{x,j_k})
\subset|\widetilde
L_{x,j_k}^{(n+1)}|$,\item$\varphi_{j_k}(|L_{x,j_k}^{(n+1)}|)=
|\widetilde L_{x,j_k}^{(n+1)}|$,\item$g_{j_i}^{j_k}
(B_{x,j_k}^+)\subset|L_{x,j_i}^{(n+1)}|$,\item$g_{j_i}^{j_k}
|B_{x,j_k}^+:B_{x,j_k}^+\to|L_{x,j_i}^{(n+1)}|$ is homotopic
to a constant map, and\item in particular if $n=\infty$,
then $g_{j_i}^{j_k} |B_{x,j_k}^+:B_{x,j_k}^+\to B_{x,j_i}^+$
is homotopic to a constant map.
\end{enumerate}
\end{lemma}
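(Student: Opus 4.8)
The plan is to treat (1)--(3) as quick consequences of the skeletal structure together with the facts already assembled in Lemmas \ref{DtoB+} and \ref{technicals}, and to reserve the real work for the null-homotopy in (4); statement (5) will fall out as the case $n=\infty$.

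For (1), recall from Lemma \ref{DtoB+}(1) that $D_{x,j_k}=|\widetilde L_{x,j_k}^{(n)}|$, where $\widetilde L_{x,j_k}$ is a subcomplex of $\widetilde T_{j_k}$; since $\alpha_{j_k}$ is the inclusion $|\widetilde T_{j_k}^{(n)}|\hookrightarrow|\widetilde T_{j_k}^{(n+1)}|$ and $|\widetilde L_{x,j_k}^{(n)}|\subset|\widetilde L_{x,j_k}^{(n+1)}|$, the containment is immediate. For (2), I would repeat the argument that gave Lemma \ref{DtoB+}(4,5) --- namely that $\varphi_{j_k}$ is a simplicial approximation to the identity, hence carries the subdivision $L_{x,j_k}$ of $\widetilde L_{x,j_k}$ onto $\widetilde L_{x,j_k}$ without raising simplex dimension --- but now applied to the $(n+1)$-skeleta, giving $\varphi_{j_k}(|L_{x,j_k}^{(n+1)}|)=|\widetilde L_{x,j_k}^{(n+1)}|$. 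For (3), since $B_{x,j_k}^+=|L_{x,j_k}^{(n)}|\subset|T_{j_k}^{(n)}|$, the map $g_{j_i}^{j_k}$ agrees on $B_{x,j_k}^+$ with $\hat g_{j_i}^{j_k}$, so Lemma \ref{technicals}(4) yields $g_{j_i}^{j_k}(B_{x,j_k}^+)\subset B_{x,j_i}^+=|L_{x,j_i}^{(n)}|\subset|L_{x,j_i}^{(n+1)}|$.

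The core is (4). Using the bonding-map factorization $g_{j_i}^{j_k}=g_{j_i}^{j_k-1}\circ f_{j_k-1}^{j_k}\circ\varphi_{j_k}$ (cf. Lemma \ref{technicals}(2,3)), I would write
\[
g_{j_i}^{j_k}\big|B_{x,j_k}^+=\lambda\circ\alpha_{j_k}\circ\big(\varphi_{j_k}\big|B_{x,j_k}^+\big),\qquad \lambda=g_{j_i}^{j_k-1}\circ f_{j_k-1}^{j_k}.
\]
Here $\varphi_{j_k}|B_{x,j_k}^+$ maps $B_{x,j_k}^+$ into $D_{x,j_k}$ by Lemma \ref{DtoB+}(5), $\alpha_{j_k}$ includes $D_{x,j_k}$ into $|\widetilde L_{x,j_k}^{(n+1)}|$ by (1), and --- this is the step that must be checked --- $\lambda$ carries $|\widetilde L_{x,j_k}^{(n+1)}|$ into $|L_{x,j_i}^{(n+1)}|$. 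Granting this, the proof finishes at once: by Lemma \ref{DtoB+}(6) the inclusion $\alpha_{j_k}|D_{x,j_k}$ is null-homotopic in $|\widetilde L_{x,j_k}^{(n+1)}|$, so pre-composing such a null-homotopy with $\varphi_{j_k}|B_{x,j_k}^+$ and post-composing with $\lambda$ produces a homotopy in $|L_{x,j_i}^{(n+1)}|$ from $g_{j_i}^{j_k}|B_{x,j_k}^+$ to a constant map.

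I expect the main obstacle to be exactly the claim $\lambda(|\widetilde L_{x,j_k}^{(n+1)}|)\subset|L_{x,j_i}^{(n+1)}|$, i.e. the $(n+1)$-skeletal analogues of Lemma \ref{technicals}(4,5). To obtain these I would first establish the polyhedron-level containment $\lambda\big(\overline\st(v_{x,j_k},\widetilde T_{j_k})\big)\subset\overline\st(v_{x,j_i},\widetilde T_{j_i})$ from the metric star estimates of Lemma \ref{enhancedbigpicture}(1,7) (these are the $n=\infty$ forms of Lemma \ref{technicals}(5) for $f_{j_k-1}^{j_k}$ and of Lemma \ref{technicals}(4) for $g_{j_i}^{j_k-1}$), which shows that $\lambda$ sends the subcomplex $\widetilde L_{x,j_k}$ into $L_{x,j_i}$; since $\lambda$ is simplicial and hence dimension-non-increasing, this descends to the $(n+1)$-skeleta. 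Finally, for (5) I would put $n=\infty$ and invoke the convention $T^{(\infty)}=T$, so that $|L_{x,j_i}^{(n+1)}|=|L_{x,j_i}|=\overline\st(v_{x,j_i},\widetilde T_{j_i})\cap M_{j_i}=B_{x,j_i}^+$; then (4) reads precisely as the assertion that $g_{j_i}^{j_k}|B_{x,j_k}^+\colon B_{x,j_k}^+\to B_{x,j_i}^+$ is null-homotopic.
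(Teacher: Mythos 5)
Your proposal is correct and follows essentially the same route as the paper: the same factorization $g_{j_i}^{j_k}=g_{j_i}^{j_k-1}\circ f_{j_k-1}^{j_k}\circ\alpha_{j_k}\circ\widehat\varphi_{j_k}$, the null-homotopy supplied by Lemma \ref{DtoB+}(6), and the $(n+1)$-skeletal containments to keep everything inside $|L_{x,j_i}^{(n+1)}|$. The only cosmetic difference is that the crux you correctly isolate, $\lambda\big(|\widetilde L_{x,j_k}^{(n+1)}|\big)\subset|L_{x,j_i}^{(n+1)}|$, is obtained in the paper simply by invoking Lemma \ref{technicals}(5) (and the corresponding part (4)) with $n$ replaced by $n+1$ --- legitimate because those statements hold for every fixed $n\geq0$ including $\infty$ --- whereas you re-derive it from the $n=\infty$ case plus simpliciality of $\lambda$, which is sound but amounts to the same content.
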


\begin{proof}We obtain (1) from the triangulation of
$D_{x,j_k}$ from Lemma \ref{DtoB+}(1) and
$(*_4)$.  Item (2) comes from Lemma  \ref{DtoB+}(5) with $n$
replaced by $n+1$. To get (3), use Lemma
\ref{technicals}(4), Lemma \ref{DtoB+}(1) and the fact that
$|L_{x,j_i}^{(n)}|\subset|L_{x,j_i}^{(n+1)}|$.  To obtain (4),
let us write the map $g_{j_i}^{j_k}$ as a composition.  Let
$t\in B_{x,j_k}^+$.  Then $g_{j_i}^{j_k}(t)=g_{j_i}^{j_k-1}\circ
f_{j_k-1}^{j_k}\circ\alpha_{j_k}\circ\widehat\varphi_{j_k}(t)$
because of Lemma \ref{technicals}(3) and $(*_4)$.

But Lemma \ref{DtoB+}(5) shows that $\widehat\varphi_{j_k}(B_{x,j_k}^+)
=D_{x,j_k}=|\widetilde L_{x,j_k}^{(n)}|$ which contracts to
a point in $|\widetilde L_{x,j_k}^{(n+1)}|$
by (6) of that lemma; so $\alpha_{j_k}:D_{x,j_k}
\to|\widetilde L_{x,j_k}^{(n+1)}|$ is homotopic to
a constant map. Now $f_{j_k-1}^{j_k}(|\widetilde L_{x,j_k}^{(n+1)}|)
\subset|L_{x,j_k-1}^{(n+1)}|$ as a result of applying
Lemma \ref{technicals}(5) in the case that $n$ is replaced
by $n+1$.  Next apply Lemma \ref{technicals}(3) to
complete the argument for (4).  The statement (5) follows
when we choose $n=\infty$ in (4).
\end{proof}

\begin{lemma}\label{lowerstuff}Let
$\mathbf{M}=(|T_{j_i}^{(n)}|,h_i^{i+1})$ be an adjustment of
$\mathbf{Z}$, $M=\lim\mathbf{M}$, and $\pi:M\to X$ the map of
$\mathrm{Lemma\,\,\ref{altAJR}}(2)$. Then for each $x\in X$,
$\pi^{-1}(x)\neq\emptyset$, i.e., $\pi$ is surjective.
\end{lemma}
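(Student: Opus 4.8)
The plan is to recognize that this lemma is essentially an instance of Lemma~\ref{altAJR}(7), so the whole task reduces to verifying the single hypothesis required there. Recall that Lemma~\ref{altAJR} is stated for an arbitrary adjustment $\mathbf{M}=(M_{j_i},h_i^{i+1})$ of $\mathbf{Z}$, and its part (7) asserts surjectivity of $\pi$ under the assumption that $|T_{j_i}^{(0)}|\subset M_{j_i}$ for every $i\in\N$. In the present situation the closed sets defining the adjustment are $M_{j_i}=|T_{j_i}^{(n)}|$, the $n$-skeleta, so I would first check that this choice really does constitute an adjustment (a point guaranteed by the fact that each $g_{j_i}^{j_{i+1}}$ is simplicial and hence carries $|T_{j_{i+1}}^{(n)}|$ into $|T_{j_i}^{(n)}|$, and that the $h_i^{i+1}$ are $T_{j_i}$-modifications thereof), and then observe that the map $\pi:M\to X$ in the statement is precisely the one furnished by Lemma~\ref{altAJR}(2).

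The key step is then immediate: since $n\geq0$ (and under the convention $T^{(\infty)}=T$ in the case $n=\infty$), one always has the skeletal inclusion $|T_{j_i}^{(0)}|\subset|T_{j_i}^{(n)}|=M_{j_i}$ for every $i\in\N$. Thus the hypothesis of Lemma~\ref{altAJR}(7) is satisfied automatically, with no further work. I would state this inclusion explicitly and note that it holds uniformly in $i$, which is exactly what (7) demands.

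With the hypothesis in hand, I would simply invoke Lemma~\ref{altAJR}(7) to conclude that $\pi:M\to X$ is surjective, equivalently that $\pi^{-1}(x)\neq\emptyset$ for every $x\in X$. If one prefers a self-contained closing sentence, one can additionally recall that (7) rests on (6) together with the non-emptiness of each $B_{x,j_i}$, which in turn follows from the mesh estimate $\mesh T_{j_i}<\delta_{j_i}/2$ of Lemma~\ref{enhancedbigpicture}(4) forcing a vertex of $T_{j_i}$ into $\overline N(p_{m_i,\infty}(x),2\delta_{j_i})$, that vertex lying in $M_{j_i}$ by the skeletal inclusion just noted.

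Honestly there is no substantial obstacle here: the content of the lemma lies entirely in the earlier Lemma~\ref{altAJR}, and the only thing to verify is the trivial containment $|T_{j_i}^{(0)}|\subset|T_{j_i}^{(n)}|$. The proof is therefore a one-line deduction, and the main care is merely to phrase it so that the reader sees that the general machinery of Lemma~\ref{altAJR} specializes cleanly to the skeletal adjustment under consideration.
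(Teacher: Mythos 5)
Your proposal is correct and coincides with the paper's own proof, which likewise consists of the single observation that $|T_{j_i}^{(0)}|\subset|T_{j_i}^{(n)}|$ for each $i$ and then an appeal to Lemma~\ref{altAJR}(7). Your extra remarks (that the skeletal restriction is indeed an adjustment, and that (7) ultimately rests on the mesh estimate of Lemma~\ref{enhancedbigpicture}(4)) are accurate but not needed beyond what the paper records.
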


\begin{proof}Since for each $i$,
$|T_{j_i}^{(0)}|\subset|T_{j_i}^{(n)}|$, then Lemma
\ref{altAJR}(7) yields that $\pi^{-1}(x)\neq\emptyset$.
\end{proof}

\section[Cell-like Map]
{Cell-like Map}\label{celllike}

Theorem \ref{bigmapistrivsh} is the third and last step in
showing how we can obtain
from a given metrizable compactum $X$, a metrizable compactum
$Z$ that is a substitute for $X$ in the sense of extension
theory, where the replacement $Z$ is rich in the (useful)
properties seen in Lemma \ref{enhancedbigpicture}.

\begin{definition}\label{deftrivsh}A compact metrizable space
is said to have {\bf trivial shape} $($or is called {\bf
cell-like}$)$ if it has the shape of a point \cite{MS}. Such a
space always has to be a nonempty continuum.
\end{definition}

\begin{lemma}\label{trivshape}Let $\mathbf{X}=(X_i,h_i^{i+1})$
be an inverse sequence of metrizable compacta and
$X=\lim\mathbf{X}$.  Then $X$ has
trivial shape if for each $i\in\N$, there exists $j>i$
such that $h_i^j:X_j\to X_i$ is null homotopic.\qed
\end{lemma}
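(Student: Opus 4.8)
The plan is to reduce the conclusion to the standard characterization of trivial shape: a metrizable compactum has trivial shape if and only if every map from it into an $\ANR$ is null-homotopic (see \cite{MS}). Accordingly, I fix an arbitrary $\ANR$ $P$ with metric $d$ and an arbitrary map $\mu\colon X\to P$, and it suffices to show that $\mu$ is null-homotopic; since $P$ and $\mu$ are arbitrary, this gives that $X$ has trivial shape. Throughout I write $h_{i,\infty}\colon X\to X_i$ for the canonical projection from the limit, so that $h_i^j\circ h_{j,\infty}=h_{i,\infty}$ whenever $i\leq j$.

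First I would invoke the $\ANR$ fact underlying Lemma \ref{closearehomtop}, namely Theorem IV.1.1 (page 111) of \cite{Hu}: there is an open cover $\mathcal{V}$ of $P$ such that any two $\mathcal{V}$-close maps of any space into $P$ are homotopic. Because $\mu(X)$ is compact, one can extract from $\mathcal{V}$ a single $\epsilon>0$ that governs closeness on a compact neighborhood of $\mu(X)$ in $P$. Applying Lemma \ref{MarSegRes} to the inverse sequence $\mathbf{X}$, its limit $X$, the $\ANR$ $P$, the map $\mu$, and this $\epsilon$, I obtain an index $i$ and a map $f\colon X_i\to P$ with $d(f\circ h_{i,\infty},\mu)<\epsilon$. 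By the choice of $\epsilon$, the maps $\mu$ and $f\circ h_{i,\infty}$ are $\mathcal{V}$-close, hence $\mu\simeq f\circ h_{i,\infty}$.

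Now I would bring in the hypothesis: choose $j>i$ with $h_i^j\colon X_j\to X_i$ null-homotopic. Composing a null-homotopic map with any map (on either side) again yields a null-homotopic map, so $f\circ h_i^j\colon X_j\to P$ is null-homotopic. Using the compatibility $h_{i,\infty}=h_i^j\circ h_{j,\infty}$, I then have
$$f\circ h_{i,\infty}=\bigl(f\circ h_i^j\bigr)\circ h_{j,\infty},$$
and precomposing the null-homotopic map $f\circ h_i^j$ with $h_{j,\infty}$ preserves null-homotopy. Thus $f\circ h_{i,\infty}$ is null-homotopic, and combining with $\mu\simeq f\circ h_{i,\infty}$ we conclude that $\mu$ is null-homotopic, as required.

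The argument is short, and the only place that demands a little care is the passage from the cover $\mathcal{V}$ to the single $\epsilon$ compatible with the metric formulation of Lemma \ref{MarSegRes}; this is routine once one restricts attention to a compact neighborhood of $\mu(X)$, on which $\mathcal{V}$ admits a Lebesgue number. The genuinely essential (but external) ingredient is the cited characterization of trivial shape by null-homotopy of maps into $\ANR$s: this is what converts a statement about shape into a statement about ordinary homotopy classes, which Lemma \ref{MarSegRes} together with the ``eventually null-homotopic bonding maps'' hypothesis can then dispatch directly.
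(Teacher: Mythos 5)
Your argument is correct in substance, and it is worth noting that the paper itself offers no proof of Lemma \ref{trivshape}: the lemma is stated with a \qed\ as a standard fact of shape theory (implicitly from \cite{MS}), so there is no internal argument to compare against. What you have supplied is the natural proof built from exactly the tools the paper imports: the reduction to the classical characterization that a nonempty metrizable compactum has trivial shape if and only if every map of it into an ANR is null-homotopic (for the direction you need, take the ANRs to be neighborhoods of $X$ embedded in $I^\infty$: null-homotopy of the inclusions says $X$ is contractible in each such neighborhood, which is Borsuk's criterion for trivial shape; note also that null-homotopy of maps into discrete ANRs forces $X$ to be connected, so the ``nonempty continuum'' requirement of Definition \ref{deftrivsh} is met, nonemptiness of the limit coming from the $X_i$ being nonempty compacta). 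Granting that characterization, the chain Lemma \ref{MarSegRes} (property (R1) of \cite{MS}), Hu's theorem on $\mathcal{V}$-close maps (the fact behind Lemma \ref{closearehomtop}), and the factorization $h_{i,\infty}=h_i^j\circ h_{j,\infty}$ does precisely what you claim: $\mu\simeq f\circ h_{i,\infty}=\bigl(f\circ h_i^j\bigr)\circ h_{j,\infty}$ is null-homotopic.

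One detail needs repair. You extract $\epsilon$ as a Lebesgue number of $\mathcal{V}$ on ``a compact neighborhood of $\mu(X)$ in $P$,'' but an ANR need not be locally compact (an infinite-dimensional normed linear space is an AR), so such a neighborhood may fail to exist. The fix is immediate: compactness of $\mu(X)$ alone yields $\epsilon>0$ such that for every $y\in\mu(X)$ the ball $B(y,\epsilon)$ lies in some member of $\mathcal{V}$ (cover $\mu(X)$ by finitely many balls $B(y_k,\epsilon_k/2)$ with $B(y_k,\epsilon_k)$ contained in members of $\mathcal{V}$ and take $\epsilon=\min_k\epsilon_k/2$). Then $d(f\circ h_{i,\infty},\mu)<\epsilon$ places $\mu(x)$ and $f\circ h_{i,\infty}(x)$ in $B(\mu(x),\epsilon)$, hence in a common member of $\mathcal{V}$, and the rest of your argument goes through unchanged. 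With this adjustment the proof is complete; the only genuinely external ingredient remains the null-homotopy characterization of trivial shape, which you correctly attribute to \cite{MS}.
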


\begin{definition}\label{trivshamap}A proper map of one space
to another whose fibers are cell-like
will be called a {\bf cell-like} map.
\end{definition}

One should note that cell-like maps have to be surjective.
Since every finite-dimensional metrizable compactum embeds
in some $\R^n$, then one can use Corollary 5A, p. 145 of \cite{Da},
to justify the next fact.\footnote{This particular property of cell-like
continua plays no role herein.}

\begin{lemma}\label{triveqcell}If $X$ is a finite-dimensional
cell-like space, then there exists $n\in\N$ so that
$X$ can be embedded in $\R^n$ as a cellular subset.\qed
\end{lemma}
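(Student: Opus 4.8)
The plan is to reduce the statement to the cited Corollary 5A of \cite{Da} by first producing \emph{any} embedding of $X$ into a Euclidean space of sufficiently large dimension and then upgrading it to a cellular one. First I would record that, by Definition \ref{deftrivsh}, a cell-like space is automatically a compact metrizable continuum, so the hypothesis furnishes a finite-dimensional metrizable compactum $X$ with $\dim X = k < \infty$.

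Next I would invoke the classical embedding theorem of dimension theory (Menger--N\"obeling): every $k$-dimensional metrizable compactum embeds in $\R^{2k+1}$, and hence in $\R^n$ for every $n \geq 2k+1$. Composing such an embedding with a linear inclusion $\R^{2k+1}\hookrightarrow\R^n$, I am free to choose $n$ as large as convenient; in particular I would take $n$ large enough that the codimension satisfies $n-k\geq 3$ and $n\geq 5$, which is the regime in which the manifold-theoretic machinery applies.

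With $X$ now realized as a cell-like compactum in $\R^n$ in high codimension, I would apply Corollary 5A (p.\ 145) of \cite{Da}. Its role is precisely to convert the cell-likeness of $X$ (equivalently, its trivial shape, i.e.\ property $\mathrm{UV}^\infty$) together with the high-codimension hypothesis into the conclusion that the embedded copy of $X$ is cellular (possibly after re-choosing the embedding), meaning $X=\bigcap_i Q_i$ for a nested sequence of $n$-cells with $Q_{i+1}\subset\inter Q_i$. This produces the desired $n$ together with a cellular embedding, completing the argument.

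The main obstacle is hidden inside the cited corollary rather than in the elementary steps above: being cell-like does not by itself force an arbitrary embedding to be cellular (wild embeddings exist), so the real content is the verification of McMillan's cellularity criterion -- that loops in a small deleted neighborhood of $X$ bound in a prescribed larger deleted neighborhood. In codimension at least $3$ (and ambient dimension at least $5$) this criterion can be arranged by general-position and engulfing arguments, and it is exactly this part of Corollary 5A of \cite{Da} that I would lean on rather than reprove here, consistent with the footnote observing that the property is not used elsewhere in the paper.
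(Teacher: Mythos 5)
Your proposal is correct and takes essentially the same route as the paper, whose entire justification is the one sentence preceding the lemma: embed the finite-dimensional metrizable compactum in some $\R^n$ and invoke Corollary 5A (p.~145) of \cite{Da}. Your extra detail (Menger--N\"obeling into $\R^{2k+1}$, padding to codimension $\geq 3$ with $n\geq 5$, and the role of McMillan's cellularity criterion) merely makes explicit what the paper delegates wholesale to that cited corollary.
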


\begin{theorem}\label{bigmapistrivsh}Let $X$ be a nonempty
metrizable compactum, $\mathbf{Z}=(|T_i|,g_i^{i+1})$
an induced inverse sequence for
$X$, and $Z=\lim\mathbf{Z}$.  Then the
map $\pi:Z\to X$ of\, $\mathrm{Lemma\,\,\ref{altAJR}}(2)$
under the trivial adjustment is a
cell-like map from the metrizable compactum $Z=\lim\mathbf{Z}$.
Moreover,

$(*)$  for each $\CW$-complex $K_0$ with $X\tau K_0$, $Z\tau
K_0$.
\end{theorem}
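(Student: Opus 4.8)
The plan is to prove the two assertions separately: first that $\pi\colon Z\to X$ is cell-like, and then the extension-theoretic conclusion $(*)$. For cell-likeness, since $\pi$ is a map between metrizable compacta it is automatically proper, so by Definition \ref{trivshamap} it suffices to show that each fiber $\pi^{-1}(x)$ has trivial shape. Here $Z$ is the limit under the trivial adjustment, i.e.\ $n=\infty$ and $j_i=i$ (Definition \ref{trivadj}, Lemma \ref{gettrivadj}), so $\mathbf{M}=\mathbf{Z}$. I would fix $x\in X$ and invoke Lemma \ref{altAJR}(5,6) to identify the fiber $\pi^{-1}(x)=\lim\mathbf{M}_x^+$, where $\mathbf{M}_x^+=(B_{x,j_i}^+,g_{j_i}^{j_{i+1}}|B_{x,j_{i+1}}^+)$ is an inverse sequence of nonempty metrizable continua (Lemma \ref{altAJR}(4), Lemma \ref{DtoB+}(2)). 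To apply Lemma \ref{trivshape} and conclude trivial shape, I need that for each $i$ there is a $k>i$ with the bonding map $g_{j_i}^{j_k}|B_{x,j_k}^+\colon B_{x,j_k}^+\to B_{x,j_i}^+$ null-homotopic. This is exactly Lemma \ref{anullhom}(5), which in the case $n=\infty$ asserts precisely that $g_{j_i}^{j_k}|B_{x,j_k}^+$ is homotopic to a constant map (in fact for every $k>i$). Thus each fiber is the limit of an inverse sequence in which all bonding maps are null-homotopic, hence has trivial shape by Lemma \ref{trivshape}, and $\pi$ is cell-like.

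For the conclusion $(*)$, let $K_0$ be a $\CW$-complex with $X\tau K_0$. The strategy is to route through Theorem \ref{ZistauK}, which already supplies that $Z\tau|K|$ where $K$ is the fixed simplicial complex realizing $\extdim_{(\mathcal{C},\mathcal{T})}X$ (i.e.\ with $[|K|]_{(\mathcal{C},\mathcal{T})}=\extdim_{(\mathcal{C},\mathcal{T})}X$) used in building the induced inverse sequence $\mathbf{Z}$. So I first record that $Z\tau|K|$ by Theorem \ref{ZistauK}. Then I would apply Lemma \ref{therelation} with the metrizable compactum there taken to be $Z$: since $[|K|]_{(\mathcal{C},\mathcal{T})}=\extdim_{(\mathcal{C},\mathcal{T})}X$ and $Z\tau|K|$, that lemma gives $Z\tau L$ for every $\CW$-complex $L$ with $X\tau L$. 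Taking $L=K_0$ yields $Z\tau K_0$, which is $(*)$.

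The main obstacle, and the step requiring the most care, is making sure the fiber identification and the null-homotopy are legitimately being applied under the trivial adjustment. Specifically, one must confirm that the trivial adjustment $\mathbf{M}=\mathbf{Z}$ (so $M=Z$, $h_i^{i+1}=g_i^{i+1}$, $j_i=i$, $n=\infty$) genuinely satisfies the hypotheses of Lemmas \ref{altAJR} and \ref{anullhom}; in particular the surjectivity and fiber descriptions in Lemma \ref{altAJR} require the nonemptiness condition $|T_{j_i}^{(0)}|\subset M_{j_i}$, which holds trivially here since $M_{j_i}=|T_i^{(\infty)}|=|T_i|$ contains the $0$-skeleton. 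Once that bookkeeping is in place, both halves of the theorem are immediate citations. Everything else is a matter of correctly quoting Lemma \ref{altAJR}(5,6), Lemma \ref{anullhom}(5), Lemma \ref{trivshape}, Theorem \ref{ZistauK}, and Lemma \ref{therelation} in the right order.
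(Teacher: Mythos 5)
Your proposal is correct and follows essentially the same route as the paper's proof: identify $\pi^{-1}(x)=\lim\mathbf{M}_x^+$ under the trivial adjustment via Lemma \ref{altAJR}(5,6), get null-homotopic bonding maps from Lemma \ref{anullhom}(5) with $n=\infty$, conclude trivial shape by Lemma \ref{trivshape}, and obtain $(*)$ from Theorem \ref{ZistauK} (with surjectivity handled by Lemma \ref{lowerstuff}, your $0$-skeleton observation). The only deviation is that your detour through Lemma \ref{therelation} is unnecessary: Theorem \ref{ZistauK} already states $(*)$ verbatim for an arbitrary $\CW$-complex $K_0$ with $X\tau K_0$, so you are merely re-unpacking the reduction that is internal to that theorem's proof.
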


\begin{proof}By Definition \ref{trivadj},
$\mathbf{Z}=(|T_i|,g_i^{i+1})$ is the trivial
adjustment of $\mathbf{Z}$. By Lemma \ref{lowerstuff}, $\pi$
is surjective. Fix $x\in X$. From Lemma \ref{altAJR}(5,6),
$\pi\inv(x)=\lim\mathbf{M}_x^+$. In this case, $\mathbf{M}_x^+=
(B_{x,i}^+,g_i^{i+1}|B_{x,i+1}^+)$. By Lemma \ref{anullhom}(5),
every bonding map $g_i^{i+1}|B_{x,i+1}^+:B_{x,i+1}^+\to
B_{x,i}^+$ in $\mathbf{M}_x^+$ is homotopic to a constant map.
So Lemma \ref{trivshape} may be used to complete the proof that
$\pi$ is cell-like. Apply Theorem \ref{ZistauK} to obtain $(*)$.
\end{proof}

\end{document}